\newtheorem*{rep@theorem}{\rep@title}
\newcommand{\newreptheorem}[2]{%
\newenvironment{rep#1}[1]{%
 \def\rep@title{#2~\ref{##1}}%
 \begin{rep@theorem}}%
 {\end{rep@theorem}}}
\theoremstyle{plain}
\newtheorem{theorem}{Theorem}[section]
\newtheorem{lemma}[theorem]{Lemma}
\newtheorem{claim}[theorem]{Claim}
\newtheorem{lem}[theorem]{Lemma}
\newtheorem{prop}[theorem]{Proposition}
\newtheorem{cor}[theorem]{Corollary}
\theoremstyle{definition}
\theoremstyle{definition}
\theoremstyle{remark}
\newtheorem*{rem}{Remark}
\numberwithin{equation}{section}
\newcommand{\FF}{\mathbb{F}}
\newcommand{\NN}{\mathbb{N}}
\newcommand{\RR}{\mathbb{R}}
\newcommand{\ZZ}{\mathbb{Z}}
\DeclareMathOperator{\rank}{rank}
\DeclareMathOperator{\chr}{char}
\DeclareMathOperator{\Supp}{Supp}
\title[shift operator-based polynomial method in additive combinatorics]{A new shift operator-based polynomial method in additive combinatorics}
\author{Sammy Luo}
\thanks{Department of Mathematics, MIT, Cambridge, MA 02139, USA. \\
\indent This material is based upon work supported by NSF Award No. 2303290, as well as work supported by NSF GRFP Grant DGE-1656518. Email: {\tt sammyluo@mit.edu}.}
\begin{document}

\maketitle

\begin{abstract}
    We introduce a new form of the polynomial method based on what we call ``shift operators,'' which we use to give efficient and intuitive new proofs of results previously shown using a wide range of polynomial methods, including Alon's Combinatorial Nullstellensatz and the Croot-Lev-Pach method. We end by discussing some potential new directions in which the tools introduced here may be fruitfully applied.
\end{abstract}

\section{Introduction}

Alon's Combinatorial Nullstellensatz \cite{alonCNS} has long served as a useful tool in additive combinatorics, offering simple and elegant proofs for classical results like the Erd\H{o}s-Ginzburg-Ziv theorem \cite{EGZ} and the Chevalley-Warning theorem \cite{CW}, as well as a wide range of new results (see e.g. \cite{Sun2008,Seamone2014,ABNR2022,JN2022,CH2023}). It is most often used in its ``Non-Vanishing Lemma'' form, stated below.

\begin{theorem}[Combinatorial Nullstellensatz,~{\cite{alonCNS}}]
\label{thm:null1}
Let $\FF$ be an arbitrary field, and let $f=f(x_1,\dots,x_n)$ be a polynomial in $\FF[x_1,\dots,x_n]$. Suppose the degree $deg(f)$ of $f$ is $\sum_{i=1}^n t_i$,  where each $t_i$ is a nonnegative integer, and suppose the coefficient of $\prod_{i=1}^n x_i^{t_i}$ is nonzero. Then, if $S_1,\dots,S_n$ are subsets of $\FF$ with $|S_i|>t_i$, there are $s_1\in S_1,s_2\in S_2,\dots,s_n\in S_n$ so that $f(s_1,\dots,s_n)\neq 0$.
\end{theorem}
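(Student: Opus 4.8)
The plan is to prove the contrapositive: assuming $f$ vanishes at every point of $S_1\times\cdots\times S_n$, I will show that the coefficient of $\prod_{i=1}^n x_i^{t_i}$ in $f$ is zero. First I would reduce to the case $|S_i|=t_i+1$ for every $i$: shrinking the sets only makes the vanishing hypothesis stronger, so replacing each $S_i$ by an arbitrary $(t_i+1)$-element subset loses no generality.

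Next, for each $i$ I introduce the $\FF$-linear functional (a divided-difference, or Lagrange-interpolation, operator) acting on polynomials in the single variable $x_i$:
\[
L_i(p)=\sum_{s\in S_i}\frac{p(s)}{\prod_{t\in S_i\setminus\{s\}}(s-t)}.
\]
Every denominator here is a product of differences of distinct field elements, hence invertible in $\FF$, so $L_i$ makes sense over any field. Let $L=L_1L_2\cdots L_n$ be the operator applying $L_i$ in the variable $x_i$; these commute since they act on disjoint variables. By construction $L$ is a fixed $\FF$-linear combination of the evaluation maps $p\mapsto p(s_1,\dots,s_n)$ over grid points $(s_1,\dots,s_n)\in S_1\times\cdots\times S_n$, so the hypothesis that $f$ vanishes on the grid immediately gives $L(f)=0$.

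The heart of the argument is the monomial computation
\[
L_i\bigl(x_i^{\,e}\bigr)=h_{\,e-t_i}(S_i),
\]
where $h_j$ denotes the complete homogeneous symmetric polynomial of degree $j$ in the $t_i+1$ elements of $S_i$, read as $0$ for $j<0$ and as $1$ for $j=0$; this follows by expanding the Lagrange interpolation error term, or by a short induction on $|S_i|$. The two facts I actually use are $L_i(x_i^{\,e})=0$ for $e<t_i$ and $L_i(x_i^{\,t_i})=1$. Since $L$ applied to a monomial factors as $L\bigl(\prod_i x_i^{e_i}\bigr)=\prod_i L_i(x_i^{e_i})$, for any monomial of $f$ we have $\sum_i e_i\le \deg(f)=\sum_i t_i$, and the product is nonzero only if $e_i\ge t_i$ for every $i$; together these force $e_i=t_i$ for all $i$, in which case the product equals $1$. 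Summing over the monomials of $f$ shows $L(f)$ is exactly the coefficient of $\prod_i x_i^{t_i}$ in $f$, which is therefore $0$ — contradicting the hypothesis of the theorem.

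The step I expect to be the main obstacle is the identity $L_i(x_i^{\,e})=h_{e-t_i}(S_i)$, or rather isolating cleanly the two consequences of it and arranging the bookkeeping so that the multivariate case reduces to simply multiplying univariate functionals; the rest is formal. (A remark worth making: $L_i$ can be assembled from shift operators $p(x)\mapsto p(x+a)$ via iterated divided differences, which is presumably the vantage point the paper develops.) An alternative, closer to Alon's original proof, is to establish the division lemma that $f$ vanishing on the grid forces $f=\sum_i g_i(x_i)\,h_i$ with $g_i(x_i)=\prod_{s\in S_i}(x_i-s)$ and $\deg h_i\le \deg f-|S_i|$, then extract the coefficient of $\prod_i x_i^{t_i}$; there the obstacle shifts to proving that lemma, via reduction modulo the $g_i$ and the fact that a polynomial of degree $<|S_i|$ in each variable that vanishes on $S_1\times\cdots\times S_n$ is identically zero.
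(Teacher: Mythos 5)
Your proof is correct and is essentially the paper's own argument: the paper derives this statement from its generalized Nullstellensatz (Theorem~\ref{thm:CNSnonvanish}) by applying a product of univariate functionals $\ell_i$ that annihilate $x_i^{e}$ for $e<t_i$ and send $x_i^{t_i}$ to a nonzero constant, so that $\bigl(\prod_i \ell_i\bigr)f$ evaluated at the origin isolates the coefficient of $\prod_i x_i^{t_i}$ as a linear combination of the values of $f$ on the grid --- exactly your $L(f)$. The only difference is cosmetic: you write these functionals down explicitly as Lagrange/divided-difference operators, whereas the paper obtains them abstractly as linear combinations of shift operators via a Vandermonde/dimension-counting argument (Propositions~\ref{prop:1ddl} and~\ref{prop:1dml}), which also handles the multiset/maximal-monomial generalization.
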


This non-vanishing lemma form can be generalized in several different ways. For example, the condition $\deg(f)=\sum_{i=1}^n t_i$ can be replaced with the condition that $\prod_{i=1}^n x_i^{t_i}$ is a maximal monomial in $f$ (i.e.~there is no other term $\prod_{i=1}^n x_i^{t_i'}$ in $f$ where $t_i'\geq t_i$ for all $i$) \cite{lasonCNSmaximal}; the sets $S_i$ can be replaced with multisets \cite{KosRonyaiCNS}, or the zeros at the points in $S_1\times\cdots\times S_n$ can be given multiplicity \cite{ballserraCNSpuncture}. A recent result by Xu, Han, and Kan \cite{XKH2023grobner} uses the framework of Gr\H{o}bner bases to provide a further common generalization of several of these variants. For a more detailed discussion of various versions of the Nullstellensatz, see Section~\ref{subsec:cns}.

The Nullstellensatz method, while ubiquitous, is far from the only version of the polynomial method that has proved useful in additive combinatorics. Other polynomial-based techniques useful in this area include Stepanov's method of auxiliary polynomials (see e.g. \cite{hp}) and Dvir's approach to the finite field Kakeya problem (especially the version of the argument known as the ``method of multiplicities'') \cite{dvir2009kakeya,dvir2009kakeyamult,guthkatz2010joints}, both of which involve making use of low degree polynomials that vanish with high multiplicity on a set of points.

In 2016, Croot, Lev, and Pach \cite{clp} introduced a new polynomial method, leading to a breakthrough by Ellenberg and Gijswijt \cite{ellenberggijswijt} on the ``capset problem,'' the case of the Erd\H{o}s-Ginzburg-Ziv problem over the field $\FF_3$. Their method has since been reformulated by Tao into the so-called ``slice rank method'' \cite{tao2016slicerankblog}, and has contributed to progress on many problems in additive combinatorics (see e.g. \cite{NaslundSawin2017Sunflower,lovaszsauermann2019multicolor,naslund2020egz,gijswijt2021excluding,costa2022bounds,sauermann2023erd}).

In this paper, we introduce a novel variant of the polynomial method based on what we call ``shift operators.'' These operators have a wide range of useful properties which simultaneously grant them impressive versatility and depth in their applications. Of particular note is the fact that our method is able to prove both a range of results traditionally proven using the Combinatorial Nullstellensatz, and several iconic applications of the Croot-Lev-Pach-Ellenberg-Gijswijt polynomial method. We remark that the group ring method of Petrov \cite{petrov2016groupring} similarly recovers the results of both of these methods; however, our method differs by focusing on the relationship of the operators we study with derivative operators. This leads into natural ways to handle multiplicity, allowing us to additionally encapsulate applications of even more variants of the polynomial method, such as Stepanov's method as used by Hanson and Petridis in \cite{hp}, and Dvir's method of multiplicities as used in the finite field Kakeya problem \cite{dvir2009kakeya,dvir2009kakeyamult}. By linking these seemingly disparate polynomial-based arguments, this new method holds promise for shedding light on previously unexplored connections among them, as well as the potential for new applications in additive combinatorics and beyond.

The structure of the paper is as follows. In Section~\ref{sec:prelim} we establish the notation we use, review definitions and basic facts about Hasse derivatives, and introduce the concept of shift operators. We then derive some useful properties of these shift operators and apply them to give new proofs of a wide range of results. We first specialize to the simpler one-dimensional case in Sections~\ref{sec:shift1d} and~\ref{sec:apply1d}, which is enough to prove several generalizations of the Nullstellensatz \cite{BatzayaBayarmagnai2020CNSgen, ballserraCNSpuncture} as well as the aforementioned result of Hanson and Petridis \cite{hp}. We then address the multi-dimensional case in Sections~\ref{sec:shiftmd} and~\ref{sec:applymd}, which allows us to make our connection with the Croot-Lev-Pach method, yielding proofs of results such as bounds on the sizes of multicolored sum-free sets (see \cite{lovaszsauermann2019multicolor}), as well as with Dvir's method. Finally, we discuss potential future directions of research in Section~\ref{sec:future}, including a promising approach towards further progress on the Erd\H{o}s-Ginzburg-Ziv problem over $\FF_p^n$.

\section{Preliminaries}

\label{sec:prelim}

\subsection{Motivating example: The Cauchy-Davenport Theorem}

\label{sec:cd}

Before introducing the technical definitions our method requires, we give a simple application to motivate them. The Cauchy-Davenport Theorem is the following classical result.
\begin{theorem}[Cauchy-Davenport, {\cite{CauchyDavenport}}]
If $p$ is a prime, and $A,B\subseteq \ZZ_p$ are nonempty, then
\[
|A+B|\geq \min(p,|A|+|B|-1).
\]
\end{theorem}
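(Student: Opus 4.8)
The plan is to run the classical polynomial-method proof, deducing the theorem from the Combinatorial Nullstellensatz (Theorem~\ref{thm:null1}) applied over $\FF = \FF_p$; the shift-operator framework developed in the sections below will later let us replace this black-box invocation with a self-contained argument in the same spirit.

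First I would reduce to the regime $|A|+|B|-1\le p$. If instead $|A|+|B|-1>p$, I replace $A$ by a subset $A'\subseteq A$ with $|A'| = p+1-|B|$, so that $|A'|+|B|-1 = p$; once the bound is known in the reduced regime it gives $|A'+B|\ge |A'|+|B|-1 = p$, hence $|A+B|\ge|A'+B|\ge p = \min(p,|A|+|B|-1)$. So assume $|A|+|B|-1\le p$ and suppose, for contradiction, that $|A+B|\le|A|+|B|-2$. In particular $|A+B| < p$, so I may choose a set $C\subseteq\ZZ_p$ with $A+B\subseteq C$ and $|C| = |A|+|B|-2 =: d$.

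Next I introduce the auxiliary polynomial
\[
f(x,y)\;=\;\prod_{c\in C}\bigl(x+y-c\bigr)\ \in\ \FF_p[x,y],
\]
of total degree $d$. Setting $t_1 = |A|-1$ and $t_2 = |B|-1$, we have $t_1+t_2 = d = \deg f$. Only the top-degree part $(x+y)^d$ of $f$ can contribute to the monomial $x^{t_1}y^{t_2}$, so the coefficient of $x^{t_1}y^{t_2}$ in $f$ is $\binom{d}{t_1} = \binom{|A|+|B|-2}{|A|-1}$. Since $0\le d = |A|+|B|-2\le p-1$, this binomial coefficient is nonzero in $\FF_p$. Now apply Theorem~\ref{thm:null1} with $S_1 = A$, $S_2 = B$: since $|A| = t_1+1 > t_1$ and $|B| = t_2+1 > t_2$, there exist $a\in A$ and $b\in B$ with $f(a,b)\ne 0$. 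But $a+b\in A+B\subseteq C$, so the factor $x+y-(a+b)$ of $f$ vanishes at $(a,b)$, forcing $f(a,b)=0$ --- a contradiction. Hence $|A+B|\ge|A|+|B|-1$.

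The one genuinely load-bearing step is the nonvanishing of $\binom{|A|+|B|-2}{|A|-1}$ in $\FF_p$: this is precisely where the primality of $p$ enters (the statement is false for composite moduli), and in the operator-theoretic reformulation it will correspond to an algebraic identity satisfied by the relevant shift operators acting on $x^{t_1}y^{t_2}$. Everything else --- the reduction to $|A|+|B|-1\le p$, padding $A+B$ to a set $C$ of the exact size, and keeping track of which monomials of $f$ matter --- is routine bookkeeping.
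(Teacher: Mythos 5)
Your proof is correct, but it is not the route the paper takes --- in fact it is precisely the classical argument of Alon that the paper's proof is written to depart from. You pad $A+B$ to a set $C$ of size $|A|+|B|-2$, form the two-variable polynomial $f(x,y)=\prod_{c\in C}(x+y-c)$, verify that $[x^{|A|-1}y^{|B|-1}]f=\binom{|A|+|B|-2}{|A|-1}\neq 0$ in $\FF_p$, and invoke Theorem~\ref{thm:null1} as a black box; your reduction to the regime $|A|+|B|-1\le p$ and the nonvanishing of the binomial coefficient are both handled correctly. The paper instead works with the \emph{one-variable} polynomial $f(z)=\prod_{c\in A+B}(z-c)$ and its shifts $f(x+b)$ for $b\in B$: each shift is divisible by $g_A(x)=\prod_{a\in A}(x-a)$, so all of them lie in a space of dimension $|A+B|-|A|+1$, and a Vandermonde computation (Claim~\ref{claim:monomiallindep}) shows the shifts are linearly independent when $|A+B|<p$, giving $|B|\le |A+B|-|A|+1$ directly. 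What your approach buys is brevity, at the cost of relying on the Nullstellensatz as an external tool; what the paper's approach buys is a self-contained linear-algebraic argument whose key mechanism --- linear independence of translates of a polynomial --- is exactly the phenomenon the shift-operator formalism of Sections~\ref{sec:shift1d}--\ref{sec:applymd} abstracts and generalizes (see Corollary~\ref{cor:shifts}). Since this theorem appears in the paper as the motivating example for that formalism, substituting the classical Nullstellensatz proof would defeat the purpose of the section, even though it is logically sound.
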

It is traditional \cite{alonCNS} for discussions of polynomial methods in additive combinatorics to give a proof of the Cauchy-Davenport theorem as one of their first example applications. Although there are many interesting and unrelated proofs of this result, some of which (including the original proof \cite{CauchyDavenport}, as well as Tao's uncertainty principle-based proof \cite{tao2005uncertainty}) do not use polynomial methods, the new proof we present here will aptly demonstrate some of the core ideas and intuition behind our methods.

\begin{proof}Alon's proof in \cite{alonCNS} starts by considering the polynomial
\[F(x,y)=\prod_{c\in C} (x+y-c)\]
over $\FF=\FF_p$, where $C\supset A+B$ with $|C|=|A|+|B|-2$. We begin similarly by defining the one-variable polynomial
\[
f(z)=\prod_{c\in A+B} (z-c),
\]
which by construction satisfies $f(a+b)=0$ for all $a\in A$ and $b\in B$. In particular, for any constant $b\in B$, the polynomial $f_b(x):=f(x+b)$ vanishes when $x=a$ for any $a\in A$. If we define $g_A(x)=\prod_{a\in A}(x-a)$, then $g_A(x)|f_b(x)$ for each $b\in B$.

Consider the vector space spanned by the set of polynomials $\{f_b(x)\}_{b\in B}=\{f(x+b)\}_{b\in B}$. This is a subspace of the space $V$ of polynomials $p(x)$ of degree at most $\deg(f)=|A+B|$ that are divisible by $g_A(x)$. By considering the set of possible quotients $p(x)/g_A(x)$, which is the set of all polynomials of degree at most $\deg(f)-\deg(g_A)=|A+B|-|A|$, we see that $V$ has dimension $(|A+B|-|A|)+1$.

On the other hand, we claim that, if $|A+B|=\deg(f)<p$, the set of polynomials $\{f(x+b)\}_{b\in B}$ is linearly independent. This would then yield $|B|\leq \dim(V)=|A+B|-|A|+1$ when $|A+B|<p$, which rearranges to the desired inequality. The linear independence we seek follows from the following observation.

\begin{claim}
\label{claim:monomiallindep}
For $0\leq d<p$ and any subset $H\subset \FF_p$ of size $d+1$, the set of polynomials $\{(x+h)^d\}_{h\in H}$ is linearly independent.
\end{claim}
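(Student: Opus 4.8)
The plan is to expand each polynomial $(x+h)^d$ by the binomial theorem and reduce the claim to the nonsingularity of a Vandermonde matrix. Writing
\[
(x+h)^d=\sum_{k=0}^{d}\binom{d}{k}h^{\,d-k}x^k,
\]
suppose $\sum_{h\in H}c_h(x+h)^d=0$ as a polynomial in $x$. Comparing the coefficient of $x^k$ on both sides yields $\binom{d}{k}\sum_{h\in H}c_h h^{\,d-k}=0$ for every $0\le k\le d$; I want to conclude that all the $c_h$ vanish.

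Two facts finish the argument. First, since $0\le d<p$, writing $d!=\binom{d}{k}\,k!\,(d-k)!$ shows $\binom{d}{k}$ is a product and quotient of integers none of which is divisible by $p$, so $\binom{d}{k}$ is a unit in $\FF_p$ for every $0\le k\le d$. Dividing through, $\sum_{h\in H}c_h h^{\,j}=0$ for all $j=0,1,\dots,d$. Second, this is a homogeneous system of $d+1$ equations in the $d+1$ unknowns $(c_h)_{h\in H}$ whose coefficient matrix $\bigl(h^{\,j}\bigr)_{h\in H,\,0\le j\le d}$ is a Vandermonde matrix in the pairwise distinct elements of $H$, hence invertible; so $c_h=0$ for all $h$. (Equivalently, the $(d+1)\times(d+1)$ matrix expressing the $(x+h)^d$ in the basis $1,x,\dots,x^d$ factors as a Vandermonde matrix times $\mathrm{diag}\bigl(\binom{d}{0},\dots,\binom{d}{d}\bigr)$, and both factors are invertible.)

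The computation itself is routine, so the only point I would flag is that the hypothesis $d<p$ is genuinely essential: it is precisely what makes the binomial coefficients $\binom{d}{0},\dots,\binom{d}{d}$ invertible in $\FF_p$. Without it the conclusion can fail outright --- for instance over $\FF_p$ one has $(x+h)^p=x^p+h^p$, so $\{(x+h)^p\}_{h\in H}$ spans a space of dimension at most $2$ no matter how large $H$ is. So rather than any real obstacle, the proof is mainly an exercise in isolating where this hypothesis does its work.
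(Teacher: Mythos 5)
Your proof is correct and follows essentially the same route as the paper: expand by the binomial theorem, use $d<p$ to strip off the unit binomial coefficients, and reduce to the nonsingularity of a Vandermonde matrix in the distinct elements of $H$. The parenthetical matrix factorization you give is exactly the paper's formulation.
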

\begin{proof}[Proof of Claim~\ref{claim:monomiallindep}]
It suffices to show that the $(d+1)\times (d+1)$ matrix $M=[m_{ih}]_{0\leq i\leq d,\: h\in H}$ of coefficients of the polynomials $(x+h)^d$ is nonsingular, where $m_{ih}$ is the coefficient of $x^{d-i}$ in $(x+h)^d$, which is $\binom{d}{i} h^{i}$. Since $i\leq d<p$, we can remove the factors of $\binom{d}{i}$ from each row without affecting whether the matrix is singular. This leaves us with the matrix $[h^{i}]_{0\leq i\leq d, h\in H}$, which is a Vandermonde matrix, and thus nonsingular as needed.
\end{proof}
Fix a linear combination $\ell:=\sum_{b\in B} c_b f(x+b)$. Applying Claim~\ref{claim:monomiallindep} to an arbitrary superset of $B$ of size $|A+B|+1$, we know that $\bar{\ell}:=\sum_{b\in B} c_b (x+b)^{|A+B|}\neq 0$. Let $k$ be the largest integer such that the coefficient of $x^k$ in $\bar{\ell}$ is nonzero, so that $\sum_{b\in B} c_b b^{|A+B|-k}\neq 0$ but $\sum_{b\in B} c_b b^{j}=0$ for all $j<|A+B|-k$. Then for all $d<|A+B|$, the coefficient of $x^k$ in $\sum_{b\in B} c_b (x+b)^{d}$ is zero, and so the coefficient of $x^k$ in $\ell$ equals the coefficient of $x^k$ in $\bar \ell$, which is nonzero. Hence, $\ell\neq 0$, and thus $\{f(x+b)\}_{b\in B}$ is linearly independent as needed.
\end{proof}

The key observation that ``shifts'' of a polynomial are linearly independent with each other is the starting point around which many of our tools will be built. The last part of the above argument, where we showed the independence explicitly, follows more immediately and in greater generality from the methods we will develop in Section~\ref{sec:shift1d}; see Corollary~\ref{cor:shifts}.

\begin{rem}
\label{rem:derCD}
Observe that in the case $|A|=|B|$, another way to finish the proof from the observation that the set $\{f(x+b)\}_{b\in B}$ is linearly independent is the following: let $S=\{f(x+b)\}_{b\in B}\cup \{f'(x+b)\}_{b\in B}$. If $S$ is linearly independent, then we have
\[
|A+B|=\deg(f)\geq |S|-1=2|A|-1=|A|+|B|-1,
\]
as desired. Otherwise, there exist constants $c_b,\tilde c_b$, not all zero, such that
\[
\sum_{b\in B}c_b f(x+b)=\sum_{b\in B}\tilde c_b f'(x+b).
\]
Since $g_A|\sum_{b\in B}c_b f(x+b)$, we must also have $g_A|\sum_{b\in B}\tilde c_b f'(x+b)=\frac{d}{dx}(\sum_{b\in B}\tilde c_b f(x+b))$. But we also have $g_A|\sum_{b\in B}\tilde c_b f(x+b)$, so in fact $g_A^2 | \sum_{b\in B}\tilde c_b f(x+b)$, implying that $|A+B|=\deg f\geq 2\deg g_A=2|A|$, and we are again done.

Since this alternate ending still uses the fact that $g_A$ divides the linearly independent polynomials $f(x+b)$, its purpose is only to offer a slightly different viewpoint on the proof, highlighting a potential connection with the linear independence of shifts of derivatives that we study in Section~\ref{sec:shift1d}.
\end{rem}

\subsection{Definitions and notation}

Let $\FF$ be a field. We will generally use $p$ to denote $\chr(\FF)$, the characteristic of $\FF$, if it is nonzero. For integers $a\leq b$ let $[a,b]$ denote the set of integers between $a$ and $b$ inclusive. For elements $v_1, \dots,v_m$ of a vector space $V$, denote by $\langle v_1,\dots,v_m\rangle$ the linear span of these elements. For a fixed positive integer $n$, we will be considering the polynomial ring $P_n:=\FF[X_1,\dots,X_n]$. We will occasionally consider its subspaces $P_{n}^d$ of polynomials with degree at most $d$, as well as its subspaces $P_n^{d_1,\dots,d_n}$ of polynomials with degree in $x_i$ at most $d_i$ for each $1\leq i\leq n$.

Let $\NN$ denote the set of nonnegative integers. Whenever we consider an $n$-tuple $\alpha\in \NN^n$, let its components be given by $\alpha=(\alpha_1,\dots,\alpha_n)$. Define the \emph{weight} of $\alpha$ by $|\alpha|:=\sum_{i=1}^n \alpha_i$. For $\alpha,\beta\in \NN^n$, we say $\alpha\leq \beta$ if $\alpha_i\leq \beta_i$ for all $i\in [1,n]$. Let $\alpha!=\prod_{i=1}^n \alpha_i!$, and $\binom{\alpha}{\beta}=\prod_{i=1}^n \binom{\alpha_i}{\beta_i}$.

For any $\alpha\in \NN^n$, let $X^\alpha=\prod_{i=1}^n X_i^{\alpha_i}$. For $f\in \FF[X_1,\dots,X_n]$, let $[X^\alpha]f$ denote the coefficient of $X^\alpha$ in $f$. Define the total degree $\deg(f)$ to be the maximal weight over all $\alpha$ such that $[X^\alpha]f\neq 0$, and for $1\leq i\leq n$, define the $i$-degree $\deg_i(f)$ to be the maximal value of $\alpha_i$ over all such $\alpha$.

Let $\partial_{i}$ denote the (formal) partial differential operator with respect to $X_i$, and for any $\alpha \in \NN^n$, define $\partial^\alpha=\prod_{i=1}^n \partial_i^{\alpha_i}$. We call this the $\alpha$th (ordinary) derivative. 
Recall that the $\alpha$th \emph{Hasse derivative} of $f$ is defined (in e.g.~\cite{dvir2009kakeyamult}) by
\[
H^{(\alpha)}f(X)=[Z^\alpha]f(X+Z),
\]
that is, the coefficient of $Z^\alpha$ in $f(X+Z)$ when treated as a polynomial in $Z$. In particular, note that $H^{(\alpha)}x^\beta=\binom{\beta}{\alpha}x^{\beta-\alpha}$ for $\alpha,\beta\in \NN^n$. Note also that $H^{(\alpha)}H^{(\beta)}f(X)=\binom{\alpha+\beta}{\alpha} H^{(\alpha+\beta)}f(X) = H^{(\beta)}H^{(\alpha)}f(X)$, i.e. Hasse derivatives commute with each other as operators. For convenience, we let $H^{(\alpha)}f=0$ when $\alpha\in \ZZ^n\setminus \NN^n$ and $f\in P_n$. When $\FF$ has characteristic zero, the Hasse derivative is equivalent to the ordinary derivative up to a constant factor, given by
\begin{equation*}
    \label{eqn:hasse}
    \tag{$1$}
    H^{(\alpha)}f= \frac{1}{\alpha!}\partial^\alpha f.
\end{equation*}
The same holds over $\FF$ with characteristic $p\neq 0$ as long as $\alpha_i<p$ for all $i\in [1,n]$
. When \eqref{eqn:hasse}~holds, it can be advantageous to work with the ordinary derivative, which has simpler multiplicative properties as a linear operator and is easier to build intuition around. In any other case, however, working with Hasse derivatives is preferred in order to obtain the most general results possible. We will use both as appropriate in the arguments that follow. 

\subsection{Shift Operators}

For $h\in \FF^n$, we define the linear operator $T^h$ on the space of polynomials $P_n=\FF[X_1,\dots,X_n]$ by
\[
T^h(f)(X)=f(X+h).
\]
Call these the \emph{shift operators}. 
The map $h\mapsto T^h$ can be thought of as a representation of the group $\FF^n$ on the vector space $P_n$, where for each $d\geq 0$, the subspace $P_{n}^d$ of polynomials of degree at most $d$ is an invariant subspace
. The notation $T^h$ is chosen because in certain ways, the group element $h$ behaves like an exponent in determining properties of the shift operators. For example, it is clear that $T^a T^b=T^{a+b}$ for all $a,b\in \FF^n$. Furthermore, we have the following observation.

\begin{claim}
\label{claim:exp}
We have
\[
T^h = \sum_{\alpha\in \NN^n} h^\alpha H^{(\alpha)}.
\]
In particular, if $\text{char}(\FF)=0$, or $\text{char}(\FF)=p$ and we view the $T^h$ as operators on any subspace of $P_n^{p-1,\dots,p-1}$, we have the relation
\[
    T^h=\prod_{i=1}^n \sum_{\alpha_i\geq 0} \frac{1}{\alpha_i!}h_i^{\alpha_i}\partial_i^{\alpha_i}.
\]
\end{claim}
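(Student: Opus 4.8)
The plan is to prove the identity $T^h = \sum_{\alpha \in \NN^n} h^\alpha H^{(\alpha)}$ directly from the definition of the Hasse derivative, and then derive the second formula as an immediate consequence of the relation \eqref{eqn:hasse}. First I would observe that both sides are linear operators on $P_n$, so it suffices to verify the identity on the monomial basis $\{X^\beta\}_{\beta \in \NN^n}$. Applying the left side to $X^\beta$ gives $T^h(X^\beta) = (X+h)^\beta = \prod_{i=1}^n (X_i + h_i)^{\beta_i}$, which by the multinomial theorem expands as $\sum_{\alpha \leq \beta} \binom{\beta}{\alpha} h^{\alpha} X^{\beta - \alpha}$, where $\binom{\beta}{\alpha} = \prod_i \binom{\beta_i}{\alpha_i}$ and the sum ranges over $\alpha \in \NN^n$ with $\alpha \leq \beta$ (terms with $\alpha \not\leq \beta$ contribute nothing).

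Next I would compute the right side applied to $X^\beta$: using the recalled fact that $H^{(\alpha)} X^\beta = \binom{\beta}{\alpha} X^{\beta - \alpha}$ (with the convention $H^{(\alpha)} X^\beta = 0$ when $\alpha \not\leq \beta$, which is consistent with $\binom{\beta}{\alpha} = 0$ in that case over $\ZZ$, and with the stated convention that $H^{(\alpha)} f = 0$ for $\alpha \notin \NN^n$), we get $\sum_{\alpha \in \NN^n} h^\alpha H^{(\alpha)} X^\beta = \sum_{\alpha \leq \beta} \binom{\beta}{\alpha} h^\alpha X^{\beta - \alpha}$. This matches the expansion of the left side term by term, so the two operators agree on every monomial and hence are equal. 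One point worth noting explicitly is that although the sum $\sum_{\alpha \in \NN^n} h^\alpha H^{(\alpha)}$ is formally infinite, when applied to any fixed polynomial only finitely many terms are nonzero (those with $\alpha \leq \beta$ for some monomial $X^\beta$ appearing in the polynomial), so the expression is well-defined as an operator on $P_n$.

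For the second assertion, I would invoke the relation \eqref{eqn:hasse}, namely $H^{(\alpha)} = \frac{1}{\alpha!} \partial^\alpha$, which the excerpt states holds when $\chr(\FF) = 0$, and also when $\chr(\FF) = p$ provided $\alpha_i < p$ for all $i$ — in particular on any subspace of $P_n^{p-1,\dots,p-1}$, since on such a subspace $H^{(\alpha)}$ annihilates every polynomial unless $\alpha_i \leq p-1$ for all $i$ (as $H^{(\alpha)} X^\beta = 0$ when some $\beta_i < \alpha_i$, and here $\beta_i \leq p-1$). Substituting into the first formula, and using $\alpha! = \prod_i \alpha_i!$, $h^\alpha = \prod_i h_i^{\alpha_i}$, and $\partial^\alpha = \prod_i \partial_i^{\alpha_i}$, gives $T^h = \sum_{\alpha} \frac{1}{\alpha!} h^\alpha \partial^\alpha = \sum_{\alpha} \prod_{i=1}^n \frac{1}{\alpha_i!} h_i^{\alpha_i} \partial_i^{\alpha_i}$. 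Since the $\partial_i$ commute and each factor depends only on the index $i$, this sum over $\alpha \in \NN^n$ factors as the product $\prod_{i=1}^n \left( \sum_{\alpha_i \geq 0} \frac{1}{\alpha_i!} h_i^{\alpha_i} \partial_i^{\alpha_i} \right)$, as claimed.

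I do not anticipate a serious obstacle here: the proof is essentially bookkeeping with the multinomial theorem. The only mild subtlety to handle carefully is the convergence/finiteness issue for the infinite sum and the interaction of the conventions $H^{(\alpha)} f = 0$ for $\alpha \notin \NN^n$ versus the support of the sum; making sure the characteristic-$p$ restriction to $P_n^{p-1,\dots,p-1}$ is exactly what is needed to legitimately replace Hasse derivatives by ordinary ones term by term is the one place to state precisely rather than gloss over.
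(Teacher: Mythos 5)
Your proof is correct and follows essentially the same route as the paper: the paper obtains the first identity in one line by substituting $Z=h$ into the defining expansion $f(X+Z)=\sum_\alpha (H^{(\alpha)}f(X))Z^\alpha$, whereas you verify it on the monomial basis via the multinomial theorem, which is the same computation unpacked. Your handling of the second assertion, including the observation that on $P_n^{p-1,\dots,p-1}$ only multi-indices with $\alpha_i\leq p-1$ contribute so that \eqref{eqn:hasse} applies term by term before factoring, matches the paper's argument.
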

We can write the last expression as $\exp\left(\sum_{i=1}^n h_i \partial_i \right)=\exp(h\cdot \partial)$ for short; that is, on, say, the space $P_{n}^{p-1,\dots,p-1}$, the shift operators can be thought of as the \emph{exponentials} of the corresponding differential operators in the standard sense for linear operators. Note that when $\text{char}(\FF)=p\neq 0$, in the last expression each sum is understood to stop at $\alpha_i=p-1$, since when $\alpha_i\geq p$ on this space we have $H^{(\alpha)}=0$ and $\partial_i^{\alpha_i}=0$.

\begin{proof}
By the definition of the Hasse derivatives, we have
\[
T^h f(X)=f(X+h)=\sum_{\alpha \in \NN^n} ([Z^\alpha]f(X+Z))h^\alpha = \sum_{\alpha \in \NN^n} h^\alpha H^{(\alpha)}f(X),
\]
as claimed. When $\chr(\FF)=0$, the second part of the Claim follows immediately from~\eqref{eqn:hasse} and factoring; likewise, when $\chr(\FF)=p>0$ and $\deg_i f\leq p-1$ for all $i\in [1,n]$, using \eqref{eqn:hasse} yields
\begin{align*}
    T^h f(X)&=\sum_{\forall i\in [1,n]\: 0\leq \alpha_i \leq p-1} h^\alpha H^{(\alpha)}f(X)\\
    &=\sum_{\forall i\in [1,n]\: 0\leq \alpha_i \leq p-1} \left(\prod_{i=1}^n \frac{h^{\alpha_i}\partial^{\alpha_i}}{\alpha_i !}\right)f(X) \\
    &=\prod_{i=1}^n \sum_{\alpha_i=0}^{p-1} \frac{1}{\alpha_i!}h_i^{\alpha_i}\partial_i^{\alpha_i}f(X),
\end{align*}
as required.
\end{proof}

This relationship between shift operators and derivatives is essential to the application of our methods. To illustrate why, we need to make a few more definitions. 
Given a set $A\subseteq \FF^n$, let $\Lambda_A$ denote the space of linear combinations of $\{T^a\}_{a\in A}$, as operators on $\FF[X_1,\dots,X_n]$. 
Applying Claim~\ref{claim:exp}, each such linear combination $\ell$ can be written as a linear combination of (Hasse) derivatives. In analogy with coefficients of polynomials, we can define $[H^{(\alpha)}]\ell$ as the coefficient of $H^{(\alpha)}$ in $\ell$ when represented this way. Define the degree $\deg (\ell)$ to be the minimal weight over all $\alpha\in \NN^n$ such that $[H^{(\alpha)}]\ell\neq 0$. If such an $\alpha$ does not exist, i.e.~if $\ell$ is identically zero, we write $\deg(\ell)=\infty$. Write $\ell_{(d)}$ for the degree $d$ component of $\ell$ in such a representation; that is,
\[
\ell_{(d)}=\sum_{\alpha:\:|\alpha|=d}([H^{(\alpha)}]\ell) H^{(\alpha)}.
\]
In many cases, it will be helpful to focus on the ``leading component'' $\ell_{(\deg(\ell))}$. Let $\delta(\ell)$ denote this leading component. For each $d\geq 0$, define $\Delta_A^d=\{\ell_{(d)}:\: \ell\in \Lambda_A,\: \deg(\ell)\geq d \}$, and let $\Delta_A=\bigcup_{d\geq 0} \Delta_A^d$. Thus each $\Delta_A^d$ is a space of linear operators on $\FF[X_1,\dots,X_n]$, and $\Delta_A$, the set of all possible leading terms, is a union of a chain of these spaces.

The definitions above can be generalized to work with $A$ replaced by a multiset. Given a set $A\subseteq \FF^n$ and a function $m:A\to \NN$, we denote by $(A,m)$ the multiset containing each $a\in A$ exactly $m(a)$ times. Define the size of $(A,m)$ by $|(A,m)|=\sum_{a\in A} m(a)$. For $\beta\in\NN^n$, we write $(a,\beta)\in (A,m)$ to mean $a\in A$ and $0\leq |\beta| \leq m(a)-1$. For $\beta\geq 0$, we can define the \emph{multishift operators}
\[
(T^h)^{(\beta)}=\sum_{\alpha\in \NN^n}(H^{(\beta)}(h^\alpha))H^{(\alpha)}.
\]
Here the $H^{(\beta)}$ is being applied to $h^\alpha$ as a polynomial in $h$. Note that we have
\begin{align*}
    (T^h)^{(\beta)}(f)&=\sum_{\alpha\in \NN^n}([Y^\beta](h+Y)^\alpha)H^{(\alpha)}(f)\\
    &=[Y^\beta]\sum_{\alpha\in \NN^n}((h+Y)^\alpha)[Z^\alpha]f(X+Z)=[Y^\beta]f(X+h+Y)\\
    &=H^{(\beta)}T^h f=T^h H^{(\beta)}f,
\end{align*}
so that the operator $(T^h)^{(\beta)}$ is essentially a shift operator composed with a derivative. Then, we can define $\Lambda_{(A,m)}$ analogously to before as the set of linear combinations of $\{(T^a)^{(\beta)}\}_{(a,\beta)\in (A,m)}$. The definitions of $[H^{(\alpha)}]\ell, \deg(\ell), \ell_{(d)}, \delta(\ell), \Delta_A^d,$ and $\Delta_A$ then extend readily when $A$ is replaced by $(A,m)$ with $\ell\in \Lambda_{(A,m)}$.

\section{Shift Operators in One Dimension}

\label{sec:shift1d}

To build up some intuition behind these definitions, we will start off by demonstrating some useful properties of shift operators in the case $n=1$. 
From Claim~\ref{claim:exp} we have
\[
T^h=\sum_{k\geq 0}h^k H^{(k)},
\]
for any $h\in \FF$. Fix a subset $A\subseteq \FF$, so that $\Lambda_A=\{\sum_{a\in A} c_a T^a:\: c_a \in \FF\: \forall a\in A\}$. In this one-dimensional case, $\delta(\ell)=c H^{(\deg (\ell))}$ for some $c\in \FF\setminus \{0\}$, so that for each $d\geq 0$, $\Delta_A^d$ is either $\langle H^{(d)} \rangle$ or $\{0\}$. Thus, $\Delta_A$ is completely determined by the range of values attainable by $\deg(\ell)$ for $\ell\in \Lambda_A$, which is characterized by the following result.

\begin{prop}[One-dimensional degree lemma]
\label{prop:1ddl}
Let $A$ be a subset of $\FF$. Then,
\begin{enumerate}
    \item[(i)] Every $\ell\in \Lambda_A$ with at least one nonzero coefficient satisfies $\deg(\ell)\leq |A|-1$, and
    \item[(ii)] For each integer $d\in [0,|A|-1]$, there exists such $\ell\in \Lambda_A$ achieving $\deg(\ell)=d$.
\end{enumerate}
\end{prop}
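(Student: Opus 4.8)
The plan is to pass from operators to their coefficient sequences. Fix $A\subseteq\FF$; a general element of $\Lambda_A$ is $\ell=\sum_{a\in A}c_aT^a$, and the one-dimensional form of Claim~\ref{claim:exp}, namely $T^a=\sum_{k\ge0}a^kH^{(k)}$, gives
\[
\ell=\sum_{k\ge0}\Bigl(\sum_{a\in A}c_aa^k\Bigr)H^{(k)},\qquad\text{i.e.}\qquad[H^{(k)}]\ell=\sum_{a\in A}c_aa^k.
\]
Setting $m=|A|$ and defining the linear map $C\colon\Lambda_A\to\FF^{m}$ by $C(\ell)=\bigl([H^{(0)}]\ell,\dots,[H^{(m-1)}]\ell\bigr)$, this shows that in the spanning set $\{T^a\}_{a\in A}$ the map $C$ is represented by the $m\times m$ matrix $[a^k]_{0\le k\le m-1,\,a\in A}$. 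The whole proposition reduces to understanding this matrix, and the point is that it is a Vandermonde matrix in the pairwise distinct elements of $A$, hence invertible.

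For part (i), I would argue by contradiction: if $\ell\in\Lambda_A$ has some $c_a\ne0$ but $\deg(\ell)\ge m$, then $[H^{(k)}]\ell=0$ for all $k\in[0,m-1]$, that is $C(\ell)=0$; since the Vandermonde matrix representing $C$ is nonsingular, this forces $(c_a)_{a\in A}=0$, a contradiction. Note this also shows that $\{T^a\}_{a\in A}$ is linearly independent, so that $\dim\Lambda_A=m$ and $C$ is in fact an isomorphism.

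For part (ii), given $d\in[0,m-1]$ I want $\ell\in\Lambda_A$ with $[H^{(k)}]\ell=0$ for $k<d$ and $[H^{(d)}]\ell\ne0$; it suffices to produce $\ell$ with $C(\ell)$ equal to the $d$-th standard basis vector of $\FF^m$, since then the first index at which the coefficient sequence of $\ell$ is nonzero is exactly $d$. Because $C$ is an isomorphism by part (i), such an $\ell$ exists (uniquely), and one can even write it down explicitly via Cramer's rule as a combination of the $T^a$ with Vandermonde/Lagrange-type coefficients. This yields $\deg(\ell)=d$.

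There is essentially no obstacle beyond correctly setting up the dictionary between $\Lambda_A$ and coefficient vectors. The one point that deserves care is that part (ii) genuinely needs the full invertibility of the Vandermonde matrix, not merely a dimension count: showing that the ``degree $<d$'' conditions cut out a nonzero subspace of $\Lambda_A$ is easy, but one must also be able to keep the $H^{(d)}$-coefficient nonzero, which is why the argument is best phrased through the isomorphism $C$ rather than through a bare rank computation.
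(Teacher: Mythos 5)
Your proof is correct, and its core is the same as the paper's: both reduce the proposition to the nonsingularity of the Vandermonde matrix $[a^k]_{0\le k\le |A|-1,\,a\in A}$ (the paper proves this nonsingularity via the observation that a degree $\le |A|-1$ polynomial vanishing on $A$ is zero, which you may invoke or reprove as you like). The only real divergence is in part (ii). You use the full invertibility of the coefficient map $C$ to solve $C(\ell)=e_d$ exactly, which pins down $[H^{(k)}]\ell=0$ for $k<d$ and $[H^{(d)}]\ell\ne 0$ in one step. The paper instead restricts to a subset $A'\subseteq A$ of size $d+1$, notes that part (i) applied to $A'$ forces $\deg(\ell)\le d$ for every nonzero $\ell\in\Lambda_{A'}$, and then observes that the $d$ linear conditions $[H^{(k)}]\ell=0$, $k<d$, leave a nonzero subspace of the $(d+1)$-dimensional space $\Lambda_{A'}$; any nonzero element of it has degree exactly $d$ by squeezing. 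This makes your closing remark slightly off: a ``bare'' dimension count does suffice for (ii), provided one first shrinks $A$ to size $d+1$ so that the upper bound from (i) caps the degree at $d$ — the subset trick is what lets the paper avoid computing a preimage explicitly. Your route buys an explicit (Lagrange-type) formula for $\ell$; the paper's buys a softer argument that generalizes more readily to the multiset and multidimensional settings where explicit inversion is unavailable. Both are valid.
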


\begin{proof}
Item~(i) can be checked via any of several simple computations that boil down to evaluating a Vandermonde determinant (as in our proof of the Cauchy-Davenport Theorem in Section~\ref{sec:cd}), essentially showing that the vectors given by the first $|A|$ coefficients of each $T^a$ are linearly independent. We include a more conceptual argument here for the sake of completeness and illustration.

For each $a\in A$, define $v_a=(a^0,\dots,a^{|A|-1})\in \FF^{|A|}$. It suffices to show that $\{v_a\}_{a\in A}$ is linearly independent, i.e.~that the $|A|\times |A|$ matrix $M$ whose rows are the $v_a$ is nonsingular. Indeed, suppose we have constants $c_0,\dots,c_{|A|-1}$ such that for all $a\in A$ we have
\[
\sum_{k=0}^{|A|-1} c_k a^k = 0.
\]
Then the polynomial $P(X)=\sum_{k=0}^{|A|-1} c_k X^k$ is a polynomial of degree at most $|A|-1$ that vanishes on the set $|A|$, meaning $P(X)$ is identically zero, and thus $c_0=\cdots=c_{|A|-1}=0$, so that $M$ is nonsingular as needed, proving item~(i).

Applying item~(i) to a subset $A'\subseteq A$ of size $d+1$ yields that $\deg(\ell)\leq d$ for all $\ell\in \Lambda_{A'}$. On the other hand, item~(i) implies in particular that the set of operators $\{T^a\}_{a\in A'}$ is linearly independent, so that $\dim(\Lambda_{A'})=|A'|=d+1$, while the subspace $\{\ell\in \Lambda_{A'}\mid \: [H^{(k)}]\ell=0\:\forall k\in [0,d-1]\}$ has codimension at most $d$. Thus, $\deg(\ell)=d$ for some $\ell\in \Lambda_{A'}\subseteq \Lambda_A$, as needed.
\end{proof}

From the definition of $\deg(\ell)$ and the properties of Hasse derivatives, we know that for any polynomial $f(X)$,
\[
\deg(\ell(f))\leq \deg(f)-\deg(\ell).
\]
Equality always holds when $\chr(\FF)=0$. When $\chr(\FF)=p>0$, equality occurs when $\binom{\deg(f)}{\deg(\ell)}\not\equiv 0 \pmod p$, which holds, for example, if $\deg(\ell)\leq \deg(f)<p$. Proposition~\ref{prop:1ddl} then immediately implies the following.

\begin{cor}
\label{cor:shifts}
Let $f(X)\in \FF[X]$ be a polynomial with $\deg(f)=D$, where $\chr(\FF)=0$ or $D<\chr(\FF)$. For any set $A\subset \FF$ with $|A|\leq D+1$, the set of polynomials
\[
\{T^{a}f\}_{a\in A}
\]
is linearly independent. In particular, there is a linear combination of the elements of $\{T^{a}f\}_{a\in A}$ whose degree is exactly $D-(|A|-1)$.
\end{cor}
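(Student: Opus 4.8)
The plan is to deduce this directly from Proposition~\ref{prop:1ddl} together with the degree inequality $\deg(\ell(f))\leq \deg(f)-\deg(\ell)$ recorded just above the corollary. First I would establish the linear independence. Suppose $\ell=\sum_{a\in A}c_a T^a\in\Lambda_A$ has at least one nonzero coefficient and satisfies $\ell(f)=0$. By Proposition~\ref{prop:1ddl}(i), $\deg(\ell)\leq |A|-1\leq D$. Since either $\chr(\FF)=0$, or $\deg(\ell)\leq D<\chr(\FF)$ so that $\binom{D}{\deg(\ell)}\not\equiv 0\pmod{p}$, the equality case of the degree formula applies and gives $\deg(\ell(f))=D-\deg(\ell)$, a finite nonnegative integer. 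Hence $\ell(f)\neq 0$, contradicting the assumption; so all coefficients $c_a$ vanish and $\{T^a f\}_{a\in A}$ is linearly independent.

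For the ``in particular'' clause, I would invoke Proposition~\ref{prop:1ddl}(ii) with $d=|A|-1$ (which lies in $[0,|A|-1]$ and satisfies $d\leq D$) to produce $\ell\in\Lambda_A$ with $\deg(\ell)=|A|-1$. Applying the same equality case of the degree formula, valid because $|A|-1\leq D$ and either $\chr(\FF)=0$ or $D<\chr(\FF)$, yields $\deg(\ell(f))=D-(|A|-1)$. Writing $\ell=\sum_{a\in A}c_a T^a$ and recalling $T^a H^{(\beta)}$-type identities are not needed here, this $\ell(f)=\sum_{a\in A}c_a\, T^a f$ is the desired linear combination of the shifts of $f$ of degree exactly $D-(|A|-1)$.

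I do not expect a genuine obstacle; the mathematical content is entirely carried by Proposition~\ref{prop:1ddl} and the already-justified equality condition for the degree formula. The one point requiring care is the hypothesis on the characteristic: it is exactly what guarantees $\binom{D}{\deg(\ell)}\neq 0$ in $\FF$, so that pushing a nonzero operator of controlled degree through $f$ cannot annihilate it. It is also worth flagging that the bound $|A|\leq D+1$ is used in two places, once to keep $\deg(\ell)\leq D$ in the independence argument and once to ensure the target degree $D-(|A|-1)$ is nonnegative, so this hypothesis is essential rather than cosmetic.
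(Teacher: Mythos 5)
Your proposal is correct and matches the paper's intended argument: the paper derives the corollary as an immediate consequence of Proposition~\ref{prop:1ddl} combined with the equality case of the degree formula $\deg(\ell(f))\leq\deg(f)-\deg(\ell)$, which is exactly what you spell out. The details you supply — bounding $\deg(\ell)\leq|A|-1\leq D$ so the binomial coefficient $\binom{D}{\deg(\ell)}$ is nonzero in $\FF$, and invoking part~(ii) for the final degree claim — are precisely the ones the paper leaves implicit.
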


A generalization of Proposition~\ref{prop:1ddl} for multisets can be shown with a very similar proof. The details in this proof around the relationship between derivatives and multiplicity explain the motivation behind the definition we chose for $\Lambda_{(A,m)}$.
\begin{prop}[One-dimensional multidegree lemma]
\label{prop:1dml}
Let $(A,m)$ be a multiset in $\FF$. Then,
\begin{enumerate}
    \item[(i)] Every $\ell\in \Lambda_{(A,m)}$ with at least one nonzero coefficient satisfies $\deg(\ell)\leq |(A,m)|-1$, and
    \item[(ii)] For each integer $d\in [0,|(A,m)|-1]$, there exists such $\ell\in \Lambda_A$ achieving $\deg(\ell)=d$.
\end{enumerate}
\end{prop}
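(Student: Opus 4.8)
The plan is to mirror the proof of Proposition~\ref{prop:1ddl} almost verbatim, replacing the Vandermonde determinant used there by a confluent Vandermonde argument, which I phrase via polynomial divisibility so that it works over an arbitrary field. Write $N=|(A,m)|$. In one dimension the multishift operators take the explicit form
\[
(T^a)^{(\beta)}=\sum_{k\geq 0}\binom{k}{\beta}a^{k-\beta}H^{(k)},
\]
so $[H^{(k)}](T^a)^{(\beta)}=\binom{k}{\beta}a^{k-\beta}$. For item~(i) it then suffices, exactly as before, to show that the $N\times N$ matrix $M$ whose rows are the truncated coefficient vectors $v_{a,\beta}=\bigl(\binom{k}{\beta}a^{k-\beta}\bigr)_{k=0}^{N-1}$, indexed by the pairs $(a,\beta)\in(A,m)$, is nonsingular: this prevents a nonzero $\ell\in\Lambda_{(A,m)}$ from having all of $[H^{(0)}]\ell,\dots,[H^{(N-1)}]\ell$ equal to zero.

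Since $M$ is square, I will instead show that its columns are linearly independent. A column relation is a tuple $(c_0,\dots,c_{N-1})$ with $\sum_{k=0}^{N-1}c_k\binom{k}{\beta}a^{k-\beta}=0$ for every $(a,\beta)\in(A,m)$. Putting $P(X)=\sum_{k=0}^{N-1}c_kX^k$ and using $H^{(\beta)}X^k=\binom{k}{\beta}X^{k-\beta}$, this is exactly the condition $H^{(\beta)}P(a)=0$ for all $a\in A$ and all $\beta\in[0,m(a)-1]$. Since $H^{(\beta)}P(a)=[Z^\beta]P(a+Z)$, this says $P(a+Z)$ has no term of degree below $m(a)$, i.e.\ $(X-a)^{m(a)}\mid P(X)$, for each $a\in A$. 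The elements of $A$ are distinct, so these factors are pairwise coprime and $\prod_{a\in A}(X-a)^{m(a)}\mid P(X)$; but the left side has degree $N$ while $\deg P\leq N-1$, so $P=0$ and $c_0=\dots=c_{N-1}=0$. Hence $M$ is nonsingular, which gives item~(i); in particular the $N$ operators $(T^a)^{(\beta)}$, $(a,\beta)\in(A,m)$, are linearly independent, so $\dim\Lambda_{(A,m)}=N$.

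For item~(ii), fix $d\in[0,N-1]$ and pick a sub-multiset $(A',m')$ of $(A,m)$ with $|(A',m')|=d+1$ (choose $m'\leq m$ pointwise of the prescribed total size). Then $\Lambda_{(A',m')}\subseteq\Lambda_{(A,m)}$, and item~(i) applied to $(A',m')$ shows simultaneously that $\deg(\ell)\leq d$ for every nonzero $\ell\in\Lambda_{(A',m')}$ and that $\dim\Lambda_{(A',m')}=d+1$. The subspace of $\Lambda_{(A',m')}$ carved out by the $d$ linear conditions $[H^{(k)}]\ell=0$ for $k\in[0,d-1]$ thus has dimension at least $1$, so it contains some nonzero $\ell$; by construction $\deg(\ell)\geq d$, and by item~(i) $\deg(\ell)\leq d$, so $\deg(\ell)=d$, completing the proof.

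I do not expect a real obstacle here: the only input beyond bookkeeping is the identification of vanishing to Hasse order $\geq r$ at $a$ with divisibility by $(X-a)^r$, together with the coprimality of the $(X-a)^{m(a)}$ for distinct $a\in A$ — precisely the link between derivatives and multiplicity mentioned just before the statement — and everything else is the same linear-algebra skeleton as in Proposition~\ref{prop:1ddl}.
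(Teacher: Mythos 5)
Your proposal is correct and follows essentially the same route as the paper: the same $N\times N$ matrix of truncated Hasse-derivative coefficients, the same identification of a column relation with a polynomial $P$ of degree at most $N-1$ divisible by $\prod_{a\in A}(X-a)^{m(a)}$, and the same dimension count for item~(ii). The only difference is that you spell out the step ``vanishing to Hasse order $\geq m(a)$ at $a$ implies $(X-a)^{m(a)}\mid P$,'' which the paper asserts without comment.
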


\begin{proof}
As before, item~(i) applied to a submultiset of $(A,m)$ of the appropriate size implies the operators $(T^{a})^{j}$ are all linearly independent, which yields item~(ii) by dimension counting. It remains to prove item~(i). Let $N=|(A,m)|$. For each ordered pair $(a,j)\in (A,m)$, define $v_{a,j}=(H^{(j)}(a^0),\dots,H^{(j)}(a^{N-1}))\in \FF^{N}$. It suffices to show that $\{v_{a,j}\}_{(a,j)\in (A,m)}$ is linearly independent, i.e.~that the $N\times N$ matrix $M$ whose rows are the $v_{a,j}$ is nonsingular. Indeed, suppose we have constants $c_0,\dots,c_{N-1}$ such that for all $(a,j)\in (A,m)$ we have
\[
\sum_{k=0}^{N-1} c_k H^{(j)}(a^k) = 0.
\]
Let $P(X)=\sum_{k=0}^{N-1} c_k X^k$, so that $(H^{(j)}P)(a)=0$ whenever $(a,j)\in (A,m)$. Then $\prod_{a\in A}(X-a)^{m(a)}|P(X)$. But $\deg P\leq N-1<\sum_{a\in A}m(a)$, so $P(X)$ must be identically zero, and thus $c_0=\cdots=c_{N-1}=0$, so that $M$ is nonsingular as needed.
\end{proof}

The following result gives an explicit way to go from a linear combination $\ell$ of (multi)shift operators with $\deg(\ell)=d$ to a linear combination $\tilde \ell$ with $\deg(\tilde \ell)=d-1$. This is rarely useful in one dimension because Proposition~\ref{prop:1dml} gives an exact characterization of possible leading terms, but will help in proving its more useful analogue in multiple dimensions.
\begin{lem}[One-dimensional reduction lemma]
\label{lem:1drl}
Given a multiset $(A,m)\subset \FF$ and a linear combination $\ell\in \Lambda_{(A,m)}$, suppose
\[
\ell=\sum_{k\geq 0} C_k H^{(k)}.
\]
Then there exists a linear combination $\tilde \ell\in \Lambda_{(A,m)}$ that can be expanded as
\[
\tilde \ell = \sum_{k\geq 0} C_{k+1} H^{(k)}.
\]
\end{lem}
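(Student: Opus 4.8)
The goal is to "shift down the degree by one": given $\ell = \sum_{k\geq 0} C_k H^{(k)} \in \Lambda_{(A,m)}$, produce $\tilde\ell = \sum_{k\geq 0} C_{k+1}H^{(k)}$ also in $\Lambda_{(A,m)}$. The natural idea is that differentiation of operators should correspond to the operation $\sum C_k H^{(k)} \mapsto \sum C_{k+1}H^{(k)}$ on the Hasse-derivative expansion — i.e. we want a "derivative" on the operator side. Concretely, I would introduce a formal variable $h$ and think of each $T^a$ and each multishift $(T^a)^{(j)}$ as specializations at the point $a$ of the "generating" operator $T^h = \sum_\alpha h^\alpha H^{(\alpha)}$ viewed as a polynomial in $h$ with operator coefficients; in one variable $T^h = \sum_{k\geq 0} h^k H^{(k)}$. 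The key formal identity is $(T^h)^{(j)} = H^{(j)}_h T^h$, where $H^{(j)}_h$ is the $j$-th Hasse derivative in the variable $h$ — this is exactly the computation already carried out in the paper just before Proposition~\ref{prop:1dml}. So an element $\ell\in\Lambda_{(A,m)}$ corresponds to a polynomial $L(h) = \sum_{(a,j)\in(A,m)} c_{a,j}\, (h-a)^{\text{[shift]}}$-type expression; more cleanly, $\ell = \sum_{(a,j)} c_{a,j}(T^a)^{(j)}$, and if I define the operator $\partial_h$ acting on such combinations by the Hasse-style rule $\partial_h (T^a)^{(j)} = (j+1)(T^a)^{(j+1)}$ (equivalently, differentiating in the $h$-slot), then $\partial_h$ sends $\Lambda_{(A,m)}$ into itself, because $(a,j)\in(A,m)$ with $j\leq m(a)-1$ forces $(a,j+1)$ to be "at worst one past the edge," and when $j+1 = m(a)$ the term $(T^a)^{(m(a))}$ is... not obviously in $\Lambda_{(A,m)}$.

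This is where the real work is, and I expect it to be the main obstacle: the naive derivative $\partial_h$ overshoots the allowed multiplicity range, so $\partial_h\ell$ need not lie in $\Lambda_{(A,m)}$. The fix is to pass through polynomials. By Proposition~\ref{prop:1dml}'s proof mechanism, membership in $\Lambda_{(A,m)}$ is governed by a divisibility condition: a linear combination $\sum_k D_k H^{(k)}$ of Hasse derivatives lies in $\Lambda_{(A,m)}$ if and only if the associated polynomial — here I would use the correspondence sending $H^{(k)}$ to a basis dual to the conditions "$H^{(j)}$ vanishes at $a$ for $(a,j)\in(A,m)$" — satisfies the right vanishing/divisibility constraints. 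More precisely: $\ell = \sum_k C_k H^{(k)}$ applied to the monomial $X^N$ (with $N = |(A,m)|$, or larger) recovers the coefficient data, and $\ell\in\Lambda_{(A,m)}$ iff the polynomial $Q_\ell(X) := \sum_k C_k X^{?}$ is divisible by $g(X) := \prod_{a\in A}(X-a)^{m(a)}$ after the appropriate reindexing (this is the dual statement to the one in the proof of Proposition~\ref{prop:1dml}, where a polynomial killed by all the $(T^a)^{(j)}$ conditions is a multiple of $g$). The operation $C_k \mapsto C_{k+1}$ corresponds on polynomials to $Q(X)\mapsto (Q(X) - Q(0))/X$ (dividing out the lowest-degree term), and the point is that this operation preserves divisibility by a fixed polynomial $g$ with $g(0)\neq 0$ — or, if $0$ is among the roots, one handles that root separately — hence $\tilde\ell$ defined by $\tilde C_k = C_{k+1}$ again corresponds to a multiple of $g$ and so lies in $\Lambda_{(A,m)}$.

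So the plan, in order, is: (1) set up the polynomial-operator dictionary making precise that $\ell = \sum_k C_k H^{(k)}\in\Lambda_{(A,m)}$ corresponds (via a fixed finite-dimensional pairing) to a polynomial $Q_\ell$ of bounded degree that is divisible by $g(X) = \prod_{a\in A}(X-a)^{m(a)}$ — this is essentially the transpose of the argument in the proof of Proposition~\ref{prop:1dml}, together with dimension counting to see the correspondence is onto; (2) observe that the coefficient shift $C_k\mapsto C_{k+1}$ is, under this dictionary, the map $Q(X)\mapsto (Q(X)-Q(0))/X$, possibly after first reducing to the case $Q(0)=0$; (3) check this map preserves divisibility by $g$ — the only subtlety is whether $0$ is a root of $g$, i.e.\ whether $0\in A$, and if so one notes $Q(0)=0$ automatically so that $(Q/X)$ drops the multiplicity of the root $0$ by exactly one, still leaving a valid element since differentiating lowers $m(0)$'s contribution compatibly; (4) conclude that $\tilde\ell$, the operator with coefficients $C_{k+1}$, lies in $\Lambda_{(A,m)}$. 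I'd expect step (3) together with the bookkeeping at the root $0$ to be the fussy part; everything else is linear algebra already implicit in the earlier proofs. An alternative, possibly cleaner, route avoiding the $0$-root issue entirely: work directly with the operator $\partial_h$ on $\Lambda_{(A,m)}$ but first replace the multiset $(A,m)$ by $(A, m+1)$ (bumping every multiplicity by one), show $\partial_h\ell\in\Lambda_{(A,m+1)}$ trivially, and then argue that because $\ell$ itself had degree $d$ and lived in the smaller space, $\partial_h\ell$'s leading behavior forces it back into $\Lambda_{(A,m)}$ — but I suspect the polynomial-divisibility argument is the one that generalizes to the multidimensional reduction lemma the authors say this feeds into, so that's the one I would write up.
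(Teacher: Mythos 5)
There is a genuine gap here: you never actually produce $\tilde\ell$, and both of the mechanisms you propose for producing it are stated incorrectly. First, differentiating $T^h=\sum_k h^kH^{(k)}$ in the $h$-slot does \emph{not} realize the coefficient shift $C_k\mapsto C_{k+1}$: as the paper notes, $H^{(1)}_h T^h=(T^h)^{(1)}=T^hH^{(1)}$, which composes with a Hasse derivative in $X$ and \emph{raises} the degree of the leading term rather than shifting the coefficient sequence down; so the "overshooting multiplicity" problem you then try to repair is a symptom of starting from the wrong operation. Second, your replacement dictionary gets the duality backwards. In the proof of Proposition~\ref{prop:1dml}, the polynomials divisible by $g(X)=\prod_{a\in A}(X-a)^{m(a)}$ are the ones \emph{annihilated} by all the functionals attached to $(A,m)$; the elements of $\Lambda_{(A,m)}$ themselves are not "multiples of $g$," and indeed their truncated coefficient vectors $(C_0,\dots,C_{N-1})$ fill out all of $\FF^N$, so there is no divisibility constraint of the kind you invoke for $Q_\ell$ (the "?" in your definition of $Q_\ell$ is where this vagueness surfaces). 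The correct version of your route is that the full sequences $(C_k)_{k\ge 0}$ arising from $\Lambda_{(A,m)}$ are exactly the solutions of the constant-coefficient linear recurrence with characteristic polynomial $g$ (equivalently, $\sum_k C_kz^k=P(z)/\prod_a(1-az)^{m(a)}$ with $\deg P<N$), and such solution spaces are manifestly shift-invariant; that would work, and the $0\in A$ worry evaporates, but it is not what you wrote.

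The paper's proof is a short explicit construction that you should compare against: writing $\ell=\sum_{(a,j)\in(A,m)}c_{a,j}(T^a)^{(j)}$, it sets
\[
\tilde\ell=\sum_{(a,j)\in(A,m)}\bigl(c_{a,j}a+c_{a,j+1}\bigr)(T^a)^{(j)}
\]
(with $c_{a,j+1}=0$ when $(a,j+1)\notin(A,m)$), which visibly lies in $\Lambda_{(A,m)}$ with no multiplicity overshoot, and then verifies $[H^{(k)}]\tilde\ell=[H^{(k+1)}]\ell$ by Pascal's identity $\binom{k}{j}+\binom{k}{j-1}=\binom{k+1}{j}$. Note the new coefficient of $(T^a)^{(j)}$ involves $c_{a,j+1}$, i.e.\ the correction term \emph{lowers} the derivative index rather than raising it; this explicit formula (essentially "multiply by $h$ at $h=a$, plus a Hasse-product correction") is also what the remark after the lemma and the multidimensional version, Lemma~\ref{lem:mdrl}, rely on, so an existence-only argument would in any case be less useful downstream.
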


\begin{proof}
Given
\[
\ell=\sum_{(a,j)\in (A,m)} c_{a,j}(T^a)^{(j)},
\]
let
\[
\tilde \ell = \sum_{(a,j)\in (A,m)} (c_{a,j}a+c_{a,j+1})(T^a)^{(j)},
\]
where we set $c_{a,j+1}=0$ if $(a,j+1)\notin (A,m)$.

Then for any $k\geq 0$ we have
\begin{align*}
    [H^{(k)}]\tilde \ell &= \sum_{(a,j)\in (A,m)} (c_{a,j}a+c_{a,j+1})(H^{(j)}(a^k)) \\
    &= \sum_{(a,j)\in (A,m)} c_{a,j}(a H^{(j)}(a^k) + H^{(j-1)}(a^k)) \\ 
    &= \sum_{(a,j)\in (A,m)} c_{a,j} a^{k-j+1}\left(\binom{k}{j} + \binom{k}{j-1}\right) \\
    &= \sum_{(a,j)\in (A,m)} c_{a,j} a^{k-j+1}\binom{k+1}{j} = [H^{(k+1)}]\ell,
\end{align*}
as claimed.
\end{proof}
\begin{rem}
From the proof above we can also conclude the following: for any $a\in A$, $\tilde \ell_a:= \tilde \ell - a \ell$ is a linear combination satisfying $\delta(\tilde \ell_a)=\delta(\tilde \ell)$, but the coefficient of $(T^{a})^{(m(a)-1)}$ in $\tilde \ell_a$ is $c_{a,m(a)-1}a-c_{a,m(a)-1}a=0$. Thus, we can obtain a linear combination of the desired leading term with any one point $\tilde a$ of our choice reduced in multiplicity by one. This observation, again, only becomes useful in multiple dimensions.
\end{rem}

\section{Applications of the One-Dimensional Shift Operator Method}
\label{sec:apply1d}
Before moving on to our discussion of shift operators in a multi-dimensional setting, we present quick proofs of a few established results using the ideas in the previous section.

\subsection{The Combinatorial Nullstellensatz}

\label{subsec:cns}

\hspace{10pt}

We begin with a simple proof of the nonvanishing lemma form of Alon's Combinatorial Nullstellensatz \cite{alonCNS}. It is essentially equivalent to the proof given in \cite{tao2013survey}, though our formulation in terms of shift operators seems more prone to generalization. We in fact prove a generalization of the nonvanishing lemma as stated in \cite{BatzayaBayarmagnai2020CNSgen} -- as described in the introduction, the maximum degree condition is replaced by a maximal monomial condition, and the sets $A_i$ are replaced with multisets. Recall that $x^\alpha$ is a maximal monomial in a polynomial $f$ if $[x^\alpha]f\neq 0$ but $[x^\beta]f=0$ for all $\beta > \alpha$.

\begin{theorem}[Generalized Combinatorial Nullstellensatz, {\cite[Corollary~1.6]{BatzayaBayarmagnai2020CNSgen}}]
\label{thm:CNSnonvanish}
Let $\FF$ be an arbitrary field, and let $f\in \FF[X_1,\dots,X_n]$. Suppose $X^\alpha$ is a monomial with nonzero coefficient in $f$ and $\alpha$ is a maximal monomial in $f$. Then, given multisets $(A_1,m_1),\dots,(A_n,m_n)\subset \FF$ satisfying $|(A_i,m_i)|\geq \alpha_i+1$ for all $i\in [1,n]$, there are $(a_1,r_1)\in (A_1,m_1),\dots,(a_n,r_n)\in (A_n,m_n)$ so that $H^{(r_1,\dots,r_n)}f(a_1,\dots,a_n)\neq 0$.
\end{theorem}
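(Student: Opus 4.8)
The plan is to argue by contradiction, realizing the classical ``evaluate a cleverly chosen operator at the origin in two ways'' strategy through the one-dimensional machinery of Section~\ref{sec:shift1d}, applied one coordinate at a time. First I would suppose toward a contradiction that $H^{(r_1,\dots,r_n)}f(a_1,\dots,a_n)=0$ for every choice of $(a_1,r_1)\in(A_1,m_1),\dots,(a_n,r_n)\in(A_n,m_n)$. For a scalar $a\in\FF$ and an integer $j\ge 0$, let $S_i^{a,j}$ denote the operator on $P_n$ obtained by letting the one-variable multishift operator $(T^a)^{(j)}$ act in the variable $X_i$ alone. From the identity $(T^h)^{(\beta)}=H^{(\beta)}T^h=T^hH^{(\beta)}$ of Section~\ref{sec:prelim}, $(S_i^{a,j}f)(X)$ is the $i$-th Hasse derivative $H^{(j)}$ of $f$ in $X_i$, shifted by $a$ in that coordinate. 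Operators acting on distinct variables commute, so the composite $S_1^{a_1,r_1}\cdots S_n^{a_n,r_n}$ sends $f(X)$ to $(H^{(r_1,\dots,r_n)}f)(X_1+a_1,\dots,X_n+a_n)$, which at $X=0$ equals $H^{(r_1,\dots,r_n)}f(a_1,\dots,a_n)$.

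For each $i$ I would apply Proposition~\ref{prop:1dml}(ii) to the multiset $(A_i,m_i)$, whose size is at least $\alpha_i+1$, to obtain a linear combination $\ell^{(i)}=\sum_{(a,j)\in(A_i,m_i)}c^{(i)}_{a,j}(T^a)^{(j)}$ of one-variable multishift operators whose Hasse-derivative expansion has degree exactly $\alpha_i$; that is, $\ell^{(i)}=\sum_{k\ge\alpha_i}C^{(i)}_kH^{(k)}$ with leading coefficient $C^{(i)}_{\alpha_i}\ne 0$ and \emph{no} lower-degree terms. Viewing each $\ell^{(i)}$ as acting on $X_i$ inside $P_n$ and setting $L=\ell^{(1)}\circ\cdots\circ\ell^{(n)}$, the commuting-variable bookkeeping, together with the fact that the multivariate Hasse derivative factors as $H^{(k_1,\dots,k_n)}=H^{(k_1)}_1\cdots H^{(k_n)}_n$ (which one checks on monomials), yields two expansions of the same operator $L$: on one hand
\[
L=\sum_{\substack{(a_1,r_1)\in(A_1,m_1)\\ \vdots\\ (a_n,r_n)\in(A_n,m_n)}}\Big(\textstyle\prod_{i=1}^nc^{(i)}_{a_i,r_i}\Big)\,S_1^{a_1,r_1}\cdots S_n^{a_n,r_n},
\]
and on the other hand, with the sum over tuples $k$ satisfying $k_i\ge\alpha_i$ for all $i$,
\[
L=\sum_{k\ge\alpha}\Big(\textstyle\prod_{i=1}^nC^{(i)}_{k_i}\Big)H^{(k)}.
\]

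Finally I would evaluate $Lf$ at the origin $0\in\FF^n$. The first expansion and the contradiction hypothesis give $(Lf)(0)=0$. The second expansion together with the elementary identity $(H^{(k)}f)(0)=[X^k]f$ gives $(Lf)(0)=\sum_{k\ge\alpha}\big(\prod_{i}C^{(i)}_{k_i}\big)[X^k]f$; since $X^\alpha$ is a maximal monomial in $f$, every term with $k>\alpha$ vanishes, leaving $(Lf)(0)=\big(\prod_{i}C^{(i)}_{\alpha_i}\big)[X^\alpha]f$, which is nonzero because each $C^{(i)}_{\alpha_i}\ne 0$ and $[X^\alpha]f\ne 0$. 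This contradicts $(Lf)(0)=0$, completing the proof. Note that no assumption on $\chr(\FF)$ is needed, since both the identity $(H^{(k)}f)(0)=[X^k]f$ and the cancellation forced by maximality are purely formal.

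The one genuinely delicate point is the multi-variable bookkeeping in the middle step: that one-variable shift and Hasse-derivative operators placed in different coordinates commute, that $H^{(k_1,\dots,k_n)}=\prod_{i}H^{(k_i)}_i$, and that composing the $\ell^{(i)}$'s produces exactly a linear combination indexed by the product multiset $(A_1,m_1)\times\cdots\times(A_n,m_n)$. Each of these is settled by a short check on monomials, and everything else is a two-line computation, so I expect no substantive obstacle. (One could instead run the argument inside the multi-dimensional framework of Section~\ref{sec:shiftmd}, but the coordinatewise route via Proposition~\ref{prop:1dml} is the most economical.)
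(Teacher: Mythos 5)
Your proposal is correct and is essentially the paper's own argument: both apply Proposition~\ref{prop:1dml} coordinatewise to obtain one-variable operators $\ell_i$ of degree exactly $\alpha_i$, form their product, use maximality of $X^\alpha$ to see that only the $X^\alpha$ term of $f$ survives, and expand the product back into multishift operators evaluated at the origin to exhibit a nonzero value $H^{(r_1,\dots,r_n)}f(a_1,\dots,a_n)$. The only cosmetic differences are that you phrase it as a contradiction and kill the non-$\alpha$ monomials after evaluating at $0$ via $(H^{(k)}f)(0)=[X^k]f$, whereas the paper notes directly that $\left(\prod_i\ell_i\right)f$ is already a nonzero constant as a polynomial.
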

\begin{proof}
For $1\leq i\leq n$, let $e_i\in \ZZ^n$ denote the $n$-tuple with a $1$ in the $i$th coordinate and $0$s everywhere else, so that $(\alpha_1,\dots,\alpha_n)=\sum_{i=1}^n \alpha_i e_i$. For convenience, we write $T_i^h=T^{h e_i}$ and $H_i^{(j)}=H^{(j e_i)}$. For $1\leq i\leq n$, since $|(A_i,m_i)|\geq \alpha_i+1$, by Proposition~\ref{prop:1dml} we have a linear combination $\ell_i\in \Lambda_{(A_i,m_i)}$ such that $\deg(\ell_i)=\alpha_i$. Then $\ell_i X_i^{d_i}=0$ for all $d_i<\alpha_i$, but $\ell_i X_i^{\alpha_i}$ is a nonzero constant. 
Consider the product $\prod_{i=1}^n \ell_i$ of these operators. For any $\beta\in \NN^n$, we have
\[
\left(\prod_{i=1}^n \ell_i\right) X^\beta = \prod_{i=1}^n (\ell_i X_i^{\beta_i}).
\]
By the maximality of $X^\alpha$ in $f$, for every monomial $X^\beta$ in $f$ with $\beta\neq \alpha$ there is some $i$ such that $\beta_i<\alpha_i$, so that $\ell_i X_i^{\beta_i}=0$. Thus,
\[
\left(\prod_{i=1}^n \ell_i\right) f = \left([X^\alpha]f\right) \left(\prod_{i=1}^n \ell_i\right) X^\alpha = \left([X^\alpha]f\right) \left(\prod_{i=1}^n \ell_i X_i^{\alpha_i} \right).
\]
The right hand side is a nonzero constant. Expanding out the left hand side and evaluating at $X=0$ yields a linear combination of terms of the form
\[
\left(\left(\prod_{i=1}^n (T_i^{a_i})^{(r_i)}\right)f\right)(0)=\left(\left(\prod_{i=1}^n T_i^{a_i} H_i^{(r_i)}\right)f\right)(0)=H^{(r_1,\dots,r_n)}f(a_1,\dots,a_n).
\]
Thus, one of these terms must be nonzero, as desired.
\end{proof}

Notice that while the proof above deals with multishift operators in $n$ dimensions, it separates them into products of $n$ one-dimensional multishift operators, so that only the one-dimensional versions of the properties we established are needed, since the operators on different variables do not end up interacting with each other. It is natural to ask what we can do when we replace this box-like view of $n$-dimensional point sets with a more ball-like view that does not treat the $n$ coordinate directions differently from any other directions; this will be explored once we have established the properties of shift operators in multiple dimensions, in Sections~\ref{sec:shiftmd} and~\ref{sec:applymd}.

This proof connects our techniques directly to the usual polynomial method based on the Combinatorial Nullstellensatz, since we can rederive most of the results of the classical method through the nonvanishing lemma. On the other hand, the next result suggests that our variant of the method can be applied somewhat more broadly, even in one dimension, since it gives us more room to work with the concept of divisibility and factors of high multiplicity.

\subsection{Sumsets Constrained by Lacunary Polynomials}
\label{sec:lacu}
\hspace{10pt}

For a fixed prime $p$ and a positive integer $d|p-1$, let $Z_d \subseteq \FF_p$ denote the multiplicative subgroup of $\FF_p^\times$ consisting of the elements whose order is a divisor of $d$. In \cite{hp}, Hanson and Petridis show the following result using Stepanov's method, a variant of the polynomial method that takes advantage of factors of high multiplicity:
\begin{theorem}[Hanson, Petridis, {\cite[Theorem~1.2]{hp}}]
\label{thm:hp}
Let $p$ be a prime and suppose $A,B\subseteq \FF_p$ satisfy $A+B\subseteq Z_d\cup \{0\}$ for some $d$ properly dividing $p-1$. Then
\[
|A||B|\leq d+|B\cap (-A)|.
\]
\end{theorem}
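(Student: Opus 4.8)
The plan is to run a Stepanov-type argument, recast in the shift-operator language. Set $\psi(X)=X^{d+1}-X$; since $d$ properly divides $p-1$ we have $1\le d<p$, so $\psi$ has exactly the $d+1$ distinct simple roots $Z_d\cup\{0\}$. The hypothesis $A+B\subseteq Z_d\cup\{0\}$ says precisely that $\psi(a+b)=0$ for all $a\in A$, $b\in B$; equivalently, for every $a\in A$ the shift $T^a\psi$ vanishes on $B$, so $g_B(X):=\prod_{b\in B}(X-b)$ divides $T^a\psi$, and, raising to a power, $g_B^{\,k}$ divides $T^a(\psi^k)=\psi(X+a)^k$ to order $k$ along $B$; the symmetric statement with $A$ and $B$ interchanged holds as well. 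Fed through Corollary~\ref{cor:shifts}, these facts immediately give only the additive bound $|A|+|B|\le d+2$ — the polynomials $\{T^a\psi\}_{a\in A}$ are linearly independent and, modulo $g_B$, lie in a space of dimension $d+2-|B|$, which is exactly the Cauchy--Davenport argument of Section~\ref{sec:cd}. As the surrounding remarks stress, the real advantage of the present method is the room it gives to exploit divisibility by \emph{high} powers.

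The target is to produce a nonzero $F\in\FF_p[X]$ of degree at most $d$ vanishing to order at least $|A|$ at every point of $B\setminus(-A)$ and to order at least $|A|-1$ at every point of $B\cap(-A)$. Given such an $F$, counting zeros with multiplicity against the degree yields
\[
|A||B|-|B\cap(-A)|=|A|\,|B\setminus(-A)|+(|A|-1)\,|B\cap(-A)|\le\deg F\le d,
\]
which rearranges to the asserted inequality. A candidate with the right vanishing is easy to write down — for instance $\prod_{a\in A}\bigl((X+a)^d-1\bigr)$ vanishes to order $|A|-1$ or $|A|$ at each $b\in B$ according as $-b\in A$ or not — and similar candidates arise by applying a maximal-degree linear combination $\ell\in\Lambda_A$ (Proposition~\ref{prop:1ddl}, or Proposition~\ref{prop:1dml} with a suitable multiset) to a power of $\psi$, the reduction lemma (Lemma~\ref{lem:1drl}) being available for controlled descent in degree. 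The fact that $\psi$ has derivative $-1$, rather than $d$, at the single point $0$ is what produces the correction $|B\cap(-A)|$: one isolates this by running the argument with $X^d-1$ in place of $\psi$ on the part of the configuration away from $0$, and tracking the spoiled factor $(X+a)^d-1$ at pairs with $a+b=0$ separately.

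The step I expect to be the genuine obstacle is the one that actually makes this work: trimming the degree of the auxiliary polynomial from the $\sim d\,|A|$ that the obvious constructions carry down to $\le d$ while preserving the high-order vanishing on $B$. This is precisely the Stepanov heart of the argument, and the place where the multiplicative — rather than merely additive — strength of the estimate is extracted; without it one only recovers $|A|+|B|\le d+2$, which is far weaker when $|A|,|B|$ are large. Once $F$ is in hand, verifying $F\neq0$ should reduce, as in Claim~\ref{claim:monomiallindep} and Proposition~\ref{prop:1ddl}, to a Vandermonde-type nonvanishing.
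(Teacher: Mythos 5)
Your overall strategy is the right one --- this is a Stepanov-type argument, and your zero-counting target (a nonzero $F$ of degree at most $d$ vanishing to order $|A|$ on $B\setminus(-A)$ and order $|A|-1$ on $B\cap(-A)$, so that $|A||B|-|B\cap(-A)|\le\deg F\le d$) is exactly the bookkeeping the paper's proof of Theorem~\ref{thm:hptechnical} performs, with the roles of $A$ and $B$ interchanged. But the step you defer --- ``trimming the degree from $\sim d|A|$ down to $\le d$ while preserving the high-order vanishing'' --- is not a technical obstacle to be smoothed over later; it is the entire content of the proof, and your sketch of how to do it (applying $\ell\in\Lambda_A$ to ``a power of $\psi$,'' with Lemma~\ref{lem:1drl} for descent) points in the wrong direction. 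Powers $\psi^k$ have degree $k(d+1)$, and no linear combination of their shifts can land at degree $\le d$ with order-$k$ vanishing at $|B|$ points unless something very specific happens; the reduction lemma lowers the order of the leading Hasse derivative of an operator, not the degree of a polynomial.

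The mechanism that actually works is the following. Take $\ell\in\Lambda_B$ with $\deg(\ell)=|B|-1$ (Proposition~\ref{prop:1ddl}), set $K=|B|-1$ and $F_k(z)=z^k(z^d-1)$, and let the auxiliary polynomial be $\ell(F_K)$, which has degree exactly $d+K-(|B|-1)=d$ and is nonzero because its top coefficient survives. The multiplicity does \emph{not} come from raising anything to a power; it comes from the identity $H^{(K-k)}\ell(F_K)=\binom{d+K}{K-k}\,\ell(F_k)$ for $k<K$, which holds because Hasse derivatives commute with $\ell$ and because the low-degree part $-z^K$ of $F_K$ is annihilated: $\deg(z^K\cdot(-1))=|B|-1$ is too small to survive $\ell$ followed by a further derivative. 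Since each $\ell(F_k)$ is a combination of $T^bF_k$ and $F_k(a+b)=0$ for all $a\in A$, $b\in B$ when $k\ge1$ (and for $a\notin -B$ when $k=0$), every Hasse derivative of $\ell(F_K)$ up to order $K$ vanishes on $A$ (up to order $K-1$ on $A\cap(-B)$), which is exactly the required multiplicity. The lacunary structure of $z^d-1$ --- its second term having degree $0\le|B|-2$ --- is what makes this collapse possible, which is why the paper's Theorem~\ref{thm:hptechnical} is stated in terms of $F(z)=z^d+R(z)$ with $\deg R$ small. Without this identity, or an equivalent substitute, your proposal does not yet constitute a proof.
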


Here we present a version of their proof in the language of shift operators; the translation into our framework is very natural, suggesting a strong connection between their methods and ours. 
The actual result we prove is slightly more general; the statement we give here is technical, but designed to highlight the important ideas in the proof.

\begin{theorem}
\label{thm:hptechnical}
Let $p$ be a prime, and let $A,B\subseteq \FF_p$. Suppose for some integers $r \leq \min(|B|-2,d)$ and $d<p$ there exists a polynomial $F(z)\in \FF_p[z]$ of the form
\[
F(z)=z^d+R(z),
\]
where $\deg(R)=r$, such that $zF(z)$ vanishes on $A+B$. Then
\[
|A|(|B|-r)\leq d-r+|B\cup (-A)|.
\]
\end{theorem}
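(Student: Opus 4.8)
The plan is to recast the Stepanov-method proof of Hanson and Petridis in the shift-operator framework, organized around divisibility by powers of $g_A(z):=\prod_{a\in A}(z-a)$.

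Set $g(z):=zF(z)$, of degree $d+1$ (the case $\deg R=d$ is degenerate: then $r\le|B|-2$ forces $|B|\ge d+2$, whence $|A|=0$ by Cauchy--Davenport applied to $A+B$, and the claim is trivial). Since $g$ vanishes on $A+B$, for every $b\in B$ the polynomial $T^bg=(z+b)F(z+b)$ vanishes at each $a\in A$, so $g_A\mid T^bg$; and if $b\in B':=B\setminus(-A)$, then $a+b$ is a nonzero root of $F$ itself, so $g_A\mid T^bF$. Hence $g_A\mid\ell F$ for every $\ell\in\Lambda_{B'}$, where $\dim\Lambda_{B'}=|B'|=|B\cup(-A)|-|A|$. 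Since $d<\chr(\FF)$, Corollary~\ref{cor:shifts} gives that $\ell\mapsto\ell F$ is injective on $\Lambda_B$ (using $|B|\le d+1$, again from Cauchy--Davenport) and that $\deg(\ell F)=d-\deg(\ell)$ for $\ell\neq0$. Finally, the lacunarity of $F$: because $\deg R=r$, the Hasse derivative $H^{(j)}F$ is a nonzero scalar multiple of $z^{d-j}$ for every $j>r$, and $\ell F$ agrees with $\ell(z^d)$ in all degrees exceeding $r$; in particular $\deg(\ell)\ge d-r$ forces $\deg(\ell F)\le r$.

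For the Stepanov step, I would use that shift operators commute with Hasse derivatives: for $\ell\in\Lambda_{B'}$ the polynomial $\ell F$ vanishes to order $\ge m$ at each point of $A$ exactly when $H^{(j)}(\ell F)=\ell\bigl(H^{(j)}F\bigr)$ vanishes on $A$ for all $0\le j\le m-1$, and in that case $g_A^{m}\mid\ell F$, so $\deg(\ell F)\ge m|A|$. By lacunarity the vanishing conditions with $j>r$ become vacuous once $\deg(\ell)$ is large enough ($\ell(H^{(j)}F)$ is then a scalar times $\ell(z^{d-j})$, which is identically zero when $\deg(\ell)>d-j$). So I impose on a generic element of $\Lambda_{B'}$ a lower degree bound together with the remaining blocks of vanishing conditions, extract a nonzero such $\ell$ by counting dimensions, and compare $g_A^{m}\mid\ell F$ against the degree bound forced by lacunarity. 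Tracking how the degree threshold, the multiplicity $m$, and the points of $B\cap(-A)$ (whose exclusion is why we work with $B'$, and which is responsible for the appearance of $|B\cup(-A)|$ rather than $|B|$) enter the count is what should yield $|A|(|B|-r)\le d-r+|B\cup(-A)|$.

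The step I expect to be the real obstacle is arranging the count so that it genuinely improves on the Cauchy--Davenport-level bound. A naive implementation --- spend dimensions forcing $\deg(\ell)$ to be large, then divide out powers of $g_A$ --- only recovers $|A|+|B'|\lesssim d$, because the dimensions used to shrink $\deg(\ell F)$ cancel exactly against the gain from the extra factors of $g_A$. Getting the genuinely quadratic bound requires the degree-compression idea at the heart of the Stepanov argument, which in this language should manifest as using the relation $z^d\equiv -R(z)$ on the root set of $F$ to control the shifted pure powers $(z+b)^d$ modulo $g_A$ by the low-degree data $R(z+b)$, thereby producing $|A|$-fold vanishing in a polynomial of degree $\approx d$ rather than $\approx|A|d$. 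Making this substitution interact correctly with the shift operators on $B'$ is the crux; a further loose end is the passage between ordinary and Hasse derivatives when $m$ is comparable to $p$.
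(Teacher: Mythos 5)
You have correctly assembled the setting (the shift operators $T^b$, the divisibilities $g_A\mid T^b(zF)$ for $b\in B$ and $g_A\mid T^bF$ for $b\notin -A$, and the role of lacunarity), but the proposal stops exactly at the step you yourself flag as the crux, so as it stands there is a genuine gap: no mechanism is given that actually produces the $(|B|-r)$-fold vanishing on $A$ inside a polynomial of degree only about $d$. The missing idea is \emph{not} a dimension count over $\Lambda_{B'}$ with imposed vanishing conditions (that route, as you observe, only returns Cauchy--Davenport-strength information). It is the following explicit construction: take a \emph{single} $\ell\in\Lambda_B$ of maximal degree $|B|-1$ (Proposition~\ref{prop:1ddl}(ii)), set $K=|B|-1-r$ and $F_k=z^kF$, and note that $\ell$ annihilates every polynomial of degree at most $|B|-2$, so $\ell(F_k)=\ell(z^{d+k})$ for $k\le K-1$. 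Since the pure powers $z^{d+k}$ are Hasse derivatives of $z^{d+K}$, one gets $H^{(K-k)}\ell(F_K)=\binom{d+K}{K-k}\,\ell(F_k)$ for all $k<K$ (the term $H^{(K-k)}\ell(z^KR)$ dies because $(K-k)+\deg\ell>K+r$). Each $\ell(F_k)$ with $k\ge 1$ is divisible by $g_A$, and $\ell(F_0)$ by $g_1:=\prod_{a\in A\setminus(-B)}(x-a)$; hence \emph{all} Hasse derivatives of the one polynomial $\ell(F_K)$ up to order $K$ vanish appropriately on $A$, giving $g_0^{K}g_1^{K+1}\mid \ell(F_K)$ with $g_0:=\prod_{a\in A\cap(-B)}(x-a)$. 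Comparing with $\deg\ell(F_K)=d+K-(|B|-1)$ yields $|A|(|B|-r)\le d-r+|B\cap(-A)|$ (note the theorem as printed has a typo: the correct right-hand side involves $|B\cap(-A)|$, as in Theorem~\ref{thm:hp}, not $|B\cup(-A)|$).

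Two further points on your sketch. First, your decision to work in $\Lambda_{B'}$ with $B'=B\setminus(-A)$ discards information; the argument above keeps all of $B$ in $\ell$ and instead splits $g_A=g_0g_1$, charging the points of $A\cap(-B)$ one fewer power of vanishing. Second, your worry about ordinary versus Hasse derivatives when the multiplicity is comparable to $p$ is resolved automatically once the argument is phrased with Hasse derivatives throughout, as the identity $H^{(K-k)}\ell(F_K)=\binom{d+K}{K-k}\ell(F_k)$ and the implication ``all Hasse derivatives up to order $K$ vanish at $a$ $\Rightarrow$ $(x-a)^{K+1}$ divides'' require no restriction on $K$ relative to $p$.
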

Note that setting $F(z)=z^d-1$ recovers Theorem~\ref{thm:hp}. The fact that this result hinges on the second highest degree term of $F$ being of low degree suggests connections with the study of \emph{lacunary polynomials}; it may be fruitful to compare this approach with the method employed by Di Benedetto, Solymosi, and White in \cite{dbsw} to obtain a result very closely related to that of \cite{hp}.
\begin{proof}
Let
\[
f(z)=\prod_{c\in A+B} (z-c), \qquad g(x)=\prod_{a\in A}(x-a).
\]
For convenience, define $g_0(x)=\prod_{a\in A\cap (-B)}(x-a)$ and $g_1(x)=\prod_{a\in A\setminus (-B)}(x-a)$, and let $K=|B|-1-r$. Define $F_k(z)=z^k F(z)$ for $0\leq k\leq K$, so $f|F_k$ when $k\geq 1$. By Proposition~\ref{prop:1ddl}, we can find $\ell\in \Lambda_B$ such that $\deg(\ell)=|B|-1$. Since $g|T^b f$ for any $b\in B$, we have $g|T^b F_k$ for all $b\in B$ and $k\geq 1$, and $g_1|T^b F_0$ for all $b\in B$. Thus, $g_0(x)|\ell(F_k)$ for $1\leq k\leq K$ and $g_1(x)|\ell(F_k)$ for $0\leq k\leq K$. However, for $0\leq k\leq K-1$, we have
\begin{align*}
    \ell(F_k)=\ell(z^{d+k})+\ell(z^k R(z))=\ell(z^{d+k}),
\end{align*}
since $\deg(z^k R(z))\leq |B|-2$. Thus, for each $k<K$ we have
\begin{align*}
    H^{(K-k)}\ell(F_K) &=H^{(K-k)}\ell(z^{d+K})+H^{(K-k)}\ell(z^K R(z))\\
    &=\ell(H^{(K-k)}z^{d+K})+0=\varepsilon_{d+K,K-k} \ell(z^{d+k})\\
    &=\varepsilon_{d+K,K-k} \ell(F_k),
\end{align*}
where $\varepsilon_{d+K,K-k}=\binom{d+K}{K-k}$ is a constant. 
Here we have used the fact that $(K-k)+\deg(\ell)\ge 1+|B|-1>K+r=\deg(z^k R(z))$, as well as the fact that Hasse derivative operators commute with each other (and thus with linear combinations of shift operators). Then $g_0(x)|H^{(j)} \ell(F_K)$ for $0\leq j\leq K-1$, while $g_1(x)|H^{(j)} \ell(F_K)$ for $0\leq j\leq K$. Since $g_0,g_1$ are relatively prime and each consists of a product of distinct linear factors, we thus obtain $g_0^{K} g_1^{K+1}|\ell(F_K)$, so
\[
d+K-(|B|-1)=\deg(\ell(F_K))\geq (K+1)\deg g - \deg g_0.
\]
This simplifies to
\[
d-r+|B\cap (-A)|\geq (K+1)\deg g = |A|(|B|-r),
\]
which is the desired result.
\end{proof}

\section{Shift Operators in Multiple Dimensions}
\label{sec:shiftmd}

As promised, we now turn to the study of shift operators in dimensions $n>1$. 
In this setting, the range of values attained by $\deg(\ell)$ for $\ell\in \Lambda_{A}$ no longer completely determines the set of possible leading terms $\Delta_{A}$. Nevertheless, knowing the largest value attainable by $\deg(\ell)$, i.e.~the largest $d$ such that $\Delta_{A}^d\neq \{0\}$, can be valuable. Let $\deg(A)$, or $\deg(A,m)$ for a multiset, denote this largest possible degree. For instance, when $n=1$, Propositions~\ref{prop:1ddl} and~\ref{prop:1dml} yield that $\deg(A)=|A|-1$ and $\deg(A,m)=|(A,m)|-1$. We define $\deg(\emptyset)=-\infty$ for convenience. In general, it is clear that $\deg(A)$ is invariant under invertible affine transformations on $\FF^n$ applied to the set $A$. In particular, if $A$ is contained in a $k$-dimensional flat of $\FF^n$ for some $k<n$, we can change coordinates to analyze $A$ as a subset of a subspace $\FF^k$ to determine $\deg(A)$. 

Arguments similar to those in the proof of Proposition~\ref{prop:1ddl} yield upper and lower bounds on $\deg(A)$ in the multi-dimensional case as well. In this case, however, the bounds do not match, so more must be known about the set $A$ to determine $\deg(A)$ (and more generally, the structure of $\Delta_A^d$) precisely. Here is where a multi-dimensional analogue of the reduction lemma, Lemma~\ref{lem:1drl}, proves to be useful. As in the proof of Theorem~\ref{thm:CNSnonvanish}, we let $e_i\in \ZZ^n$ denote the $n$-tuple with a $1$ in the $i$th coordinate and $0$s everywhere else.

\begin{lem}[Multi-dimensional reduction lemma]
\label{lem:mdrl}
Given a multiset $(A,m)\subset \FF^n$ and a linear combination $\ell\in \Lambda_{(A,m)}$, suppose
\[
\ell=\sum_{\alpha\in \NN^n} C_\alpha H^{(\alpha)}.
\]
Then for each $i\in [1,n]$ there exists a linear combination $\tilde \ell_i\in \Lambda_{(A,m)}$ that can be expanded as
\[
\tilde \ell_i = \sum_{\alpha\in \NN^n} C_{\alpha+e_i} H^{(\alpha)}.
\]
\end{lem}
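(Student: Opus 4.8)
The plan is to mimic the one-dimensional reduction lemma (Lemma~\ref{lem:1drl}) coordinatewise, using the fact that each multishift operator $(T^a)^{(\beta)}$ is genuinely built out of one-dimensional shift-and-derivative data. Recall from Lemma~\ref{lem:1drl} that in one dimension, given $\ell=\sum_{(a,j)\in(A,m)}c_{a,j}(T^a)^{(j)}$, the combination $\tilde\ell=\sum_{(a,j)}(c_{a,j}a+c_{a,j+1})(T^a)^{(j)}$ realizes the ``shift down one degree'' operation, the key identity being $aH^{(j)}(a^k)+H^{(j-1)}(a^k)=a^{k-j+1}\binom{k+1}{j}=H^{(j)}(a^{k+1})$. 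First I would fix the index $i$ and write $\ell=\sum_{(a,\beta)\in(A,m)}c_{a,\beta}(T^a)^{(\beta)}$. The natural guess for $\tilde\ell_i$, by analogy, is
\[
\tilde\ell_i=\sum_{(a,\beta)\in(A,m)}\bigl(c_{a,\beta}\,a_i+c_{a,\beta+e_i}\bigr)(T^a)^{(\beta)},
\]
where we set $c_{a,\beta+e_i}=0$ whenever $(a,\beta+e_i)\notin(A,m)$; one must first check this is legitimate, i.e.\ that $(a,\beta+e_i)\in(A,m)$ whenever $c_{a,\beta+e_i}$ is used with a nonzero multiplier, which follows since $(a,\beta)\in(A,m)$ and $|\beta|\le|\beta+e_i|$ and the condition for membership only depends on $|\beta|\le m(a)-1$ — actually here one should be slightly careful: the relevant condition $(a,\beta+e_i)\in(A,m)$ means $|\beta+e_i|=|\beta|+1\le m(a)-1$, so the coefficient $c_{a,\beta+e_i}$ is a genuine coefficient of $\ell$ exactly when $|\beta|\le m(a)-2$, and otherwise we declared it $0$; so $\tilde\ell_i\in\Lambda_{(A,m)}$.

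The core of the argument is then the coefficient computation. Using the expansion $(T^a)^{(\beta)}=\sum_{\alpha}\bigl(H^{(\beta)}(a^\alpha)\bigr)H^{(\alpha)}$ (the definition of the multishift operator), I would compute, for each $\alpha\in\NN^n$,
\[
[H^{(\alpha)}]\tilde\ell_i=\sum_{(a,\beta)\in(A,m)}\bigl(c_{a,\beta}\,a_i+c_{a,\beta+e_i}\bigr)H^{(\beta)}(a^\alpha).
\]
Reindexing the $c_{a,\beta+e_i}$ term by $\beta\mapsto\beta-e_i$ (the boundary terms vanish by our convention and by $H^{(\beta)}=0$ for $\beta\notin\NN^n$), this becomes $\sum_{(a,\beta)}c_{a,\beta}\bigl(a_i H^{(\beta)}(a^\alpha)+H^{(\beta-e_i)}(a^\alpha)\bigr)$. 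Now everything factors across coordinates: $H^{(\beta)}(a^\alpha)=\prod_j H^{(\beta_j)}(a_j^{\alpha_j})=\prod_j\binom{\alpha_j}{\beta_j}a_j^{\alpha_j-\beta_j}$, so the $i$-th factor of $a_iH^{(\beta)}(a^\alpha)+H^{(\beta-e_i)}(a^\alpha)$ collapses by the one-dimensional identity to $\binom{\alpha_i}{\beta_i}a_i^{\alpha_i-\beta_i+1}+\binom{\alpha_i}{\beta_i-1}a_i^{\alpha_i-\beta_i+1}=\binom{\alpha_i+1}{\beta_i}a_i^{(\alpha_i+1)-\beta_i}=H^{(\beta_i)}(a_i^{\alpha_i+1})$, while all other factors are unchanged. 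Hence $a_iH^{(\beta)}(a^\alpha)+H^{(\beta-e_i)}(a^\alpha)=H^{(\beta)}(a^{\alpha+e_i})$, giving
\[
[H^{(\alpha)}]\tilde\ell_i=\sum_{(a,\beta)\in(A,m)}c_{a,\beta}H^{(\beta)}(a^{\alpha+e_i})=[H^{(\alpha+e_i)}]\ell=C_{\alpha+e_i},
\]
which is exactly the claim.

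I do not expect a serious obstacle here: the whole point of the chosen definition of $\Lambda_{(A,m)}$ and of $(T^a)^{(\beta)}$ is to make precisely this factorization go through. The only place requiring a bit of care is the bookkeeping around the multiset membership conventions and the boundary cases (where $\beta_i=0$, so $\beta-e_i\notin\NN^n$, or where $|\beta|=m(a)-1$, so $\beta+e_i$ is out of range); both are handled by the standing conventions $H^{(\gamma)}=0$ for $\gamma\notin\NN^n$ and $c_{a,\beta+e_i}=0$ for $(a,\beta+e_i)\notin(A,m)$, so that the reindexing introduces no spurious terms. This is the direct $n$-dimensional analogue of the remark following Lemma~\ref{lem:1drl}, and one could likewise observe that $\tilde\ell_i-a_i\ell$ (for a chosen $a\in A$) has the same leading component as $\tilde\ell_i$ but reduced multiplicity at $a$ — a fact that will presumably be the engine behind the multi-dimensional degree bounds alluded to just before the lemma.
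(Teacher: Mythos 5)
Your proposal is correct and coincides with the paper's own proof: the same choice $\tilde\ell_i=\sum_{(a,\beta)}(c_{a,\beta}a_i+c_{a,\beta+e_i})(T^a)^{(\beta)}$, the same reindexing with the conventions $c_{a,\beta+e_i}=0$ out of range and $H^{(\gamma)}=0$ for $\gamma\notin\NN^n$, and the same Pascal-type identity $\binom{\alpha}{\beta}+\binom{\alpha}{\beta-e_i}=\binom{\alpha+e_i}{\beta}$ in the $i$-th coordinate. Your closing observation about $\tilde\ell_i-a_i\ell$ likewise matches the remark the paper makes immediately after the lemma.
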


\begin{proof}
The proof is very similar to the one-dimensional case. Given
\[
\ell=\sum_{(a,\beta)\in (A,m)} c_{a,\beta}(T^a)^{(\beta)},
\]
let
\[
\tilde \ell_i = \sum_{(a,\beta)\in (A,m)} (c_{a,\beta}a_i+c_{a,\beta+e_i})(T^a)^{(\beta)},
\]
where we set $c_{a,\beta+e_i}=0$ if $(a,\beta+e_i)\notin (A,m)$.

Then for any $\alpha\in \NN^n$ we have
\begin{align*}
    [H^{(\alpha)}]\tilde \ell_i &= \sum_{(a,\beta)\in (A,m)} (c_{a,\beta}a_i+c_{a,\beta+e_i})(H^{(\beta)}(a^\alpha)) \\
    &= \sum_{(a,\beta)\in (A,m)} c_{a,\beta}(a_i H^{(\beta)}(a^\alpha) + H^{(\beta-e_i)}(a^\alpha)) \\ 
    &= \sum_{(a,\beta)\in (A,m)} c_{a,\beta} a^{\alpha-\beta+e_i}\left(\binom{\alpha}{\beta} + \binom{\alpha}{\beta-e_i}\right) \\
    &= \sum_{(a,\beta)\in (A,m)} c_{a,\beta} a^{\alpha-\beta+e_i}\binom{\alpha+e_i}{\beta} = [H^{(\alpha+e_i)}]\ell,
\end{align*}
as claimed.
\end{proof}
Much of the importance of this lemma comes from the multidimensional analogue of the remark under the proof of Lemma~\ref{lem:1drl}. Namely, the linear combination $\tilde\ell_{i,\varepsilon}:=\tilde \ell_i - \varepsilon \ell \in \Lambda_{(A,m)}$ satisfies $\delta(\tilde\ell_{i,\varepsilon})=\delta(\tilde\ell_i)$, but the coefficient of $(T^a)^\beta$ in $\tilde\ell_{i,\varepsilon}$ is $0$ for all $(a,\beta)$ such that $a_i=\varepsilon$ and $c_{a,\beta+e_i}=0$ (in particular, all $\beta$ of weight $m(a)-1$ for such $a$). Therefore, we have the following result on the structure of $\Delta_{(A,m)}$.

\begin{cor}
\label{cor:mdrl}
Let $(A,m)\subset \FF^n$ with $0\neq \sum_{|\alpha|=d} C_\alpha H^{(\alpha)}\in \Delta_{(A,m)}^d$. For $1\leq i\leq n$, and for any affine hyperplane $S\subset \FF^n$ parallel to $\langle e_1,\dots,e_{i-1},e_{i+1},\dots,e_n\rangle$, define $m_{S}:A\to \NN$ by
\[
m_S(a)=\begin{cases}m(a)\text{ if }a\notin S, \\ \max(m(a)-1,0)\text{ if }a\in S.\end{cases}
\]
Then
\begin{itemize}
    \item[(i)] $\sum_{|\alpha|=d,\: \alpha_i>0} C_\alpha H^{(\alpha-e_i)}\in \Delta_{(A,m_S)}^{d-1},$ and
    \item[(ii)] If $\sum_{|\alpha|=d,\: \alpha_i>0} C_\alpha H^{(\alpha)}=0$, then $\deg(A,m_S)\geq d$.
\end{itemize}
\end{cor}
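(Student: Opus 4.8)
The plan is to apply the multi-dimensional reduction lemma (Lemma~\ref{lem:mdrl}) to a linear combination witnessing the given leading term and then, as in the remark following that lemma's proof, pass to the corrected combination $\tilde\ell_{i,\varepsilon}=\tilde\ell_i-\varepsilon\ell$, which both witnesses the right leading behaviour and lies in $\Lambda_{(A,m_S)}$. So I fix $\ell=\sum_{(a,\beta)\in(A,m)}c_{a,\beta}(T^a)^{(\beta)}\in\Lambda_{(A,m)}$ with $\deg(\ell)=d$ and $\ell_{(d)}=\sum_{|\alpha|=d}C_\alpha H^{(\alpha)}$, write $C_\gamma:=[H^{(\gamma)}]\ell$ for all $\gamma\in\NN^n$ (so $C_\gamma=0$ for $|\gamma|<d$), and let $\varepsilon\in\FF$ be the common value of the $i$-th coordinate on $S$. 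Lemma~\ref{lem:mdrl} gives $\tilde\ell_i=\sum_{(a,\beta)\in(A,m)}(c_{a,\beta}a_i+c_{a,\beta+e_i})(T^a)^{(\beta)}\in\Lambda_{(A,m)}$ with $[H^{(\alpha)}]\tilde\ell_i=C_{\alpha+e_i}$, and I then set
\[
\tilde\ell_{i,\varepsilon}:=\tilde\ell_i-\varepsilon\ell=\sum_{(a,\beta)\in(A,m)}\bigl(c_{a,\beta}(a_i-\varepsilon)+c_{a,\beta+e_i}\bigr)(T^a)^{(\beta)}.
\]

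The first substantive step is to verify $\tilde\ell_{i,\varepsilon}\in\Lambda_{(A,m_S)}$: the pairs in $(A,m)$ but not in $(A,m_S)$ are exactly those $(a,\beta)$ with $a\in S$ and $|\beta|=m(a)-1$, and for such a pair $a_i=\varepsilon$, so the coefficient of $(T^a)^{(\beta)}$ collapses to $c_{a,\beta+e_i}$, which is $0$ because $(a,\beta+e_i)\notin(A,m)$. Since $[H^{(\alpha)}]\tilde\ell_{i,\varepsilon}=C_{\alpha+e_i}-\varepsilon C_\alpha$ vanishes for $|\alpha|<d-1$, we get $\deg(\tilde\ell_{i,\varepsilon})\ge d-1$, and its degree-$(d-1)$ component equals $\sum_{|\alpha|=d-1}C_{\alpha+e_i}H^{(\alpha)}$, which after reindexing $\beta=\alpha+e_i$ is precisely $\sum_{|\beta|=d,\;\beta_i>0}C_\beta H^{(\beta-e_i)}$. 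As $\tilde\ell_{i,\varepsilon}\in\Lambda_{(A,m_S)}$ and $\deg(\tilde\ell_{i,\varepsilon})\ge d-1$, this component lies in $\Delta_{(A,m_S)}^{d-1}$ by definition, giving item~(i); if the component happens to be $0$ then item~(i) holds trivially, since $0\in\Delta_{(A,m_S)}^{d-1}$ always.

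For item~(ii), the hypothesis $\sum_{|\alpha|=d,\;\alpha_i>0}C_\alpha H^{(\alpha)}=0$ says precisely that $C_\beta=0$ whenever $|\beta|=d$ and $\beta_i>0$, so the degree-$(d-1)$ component of $\tilde\ell_{i,\varepsilon}$ found above vanishes, and hence $\deg(\tilde\ell_{i,\varepsilon})\ge d$; it then suffices to produce some nonzero $\ell'\in\Lambda_{(A,m_S)}$ with $\deg(\ell')\ge d$, since this makes $\Delta_{(A,m_S)}^{\deg(\ell')}\neq\{0\}$ and so $\deg(A,m_S)\ge d$. The natural witness is $\tilde\ell_{i,\varepsilon}$ itself, and I expect the main obstacle to be ruling out the degenerate case where $\tilde\ell_{i,\varepsilon}$ is the zero operator -- i.e.\ where $\tilde\ell_i$ cancels exactly against $\varepsilon\ell$ -- which is the one place where the hypotheses $\deg(\ell)=d$ and the precise shape of $\ell_{(d)}$ really have to be used. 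I would attempt this by recording the largest value of $\gamma_i$ over $\gamma$ with $C_\gamma\neq0$ and following it under iterated application of Lemma~\ref{lem:mdrl} in the $i$-th direction; if that is not enough, by invoking the multi-dimensional analogue of the linear independence of $\{(T^a)^{(\beta)}\}_{(a,\beta)\in(A,m)}$ (the counterpart of Proposition~\ref{prop:1dml}) to deduce that $\tilde\ell_{i,\varepsilon}=0$ would force $c_{a,\beta}=0$ for all $(a,\beta)$ with $a\in S$ and $|\beta|=m(a)-1$, whence $\ell$ already belongs to $\Lambda_{(A,m_S)}$ and is itself the required degree-$d$ witness.
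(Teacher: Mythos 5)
Your proof of item (i) is correct and is exactly the paper's route: the corollary is derived from the remark following Lemma~\ref{lem:mdrl} via the same operator $\tilde\ell_{i,\varepsilon}=\tilde\ell_i-\varepsilon\ell$, the same check that the top-weight coefficients at points of $S$ vanish, and the same reindexing of the degree-$(d-1)$ component. The problem is in your fallback for item (ii). You rightly isolate the delicate point (the paper glosses over it entirely): what if $\tilde\ell_{i,\varepsilon}$ is the zero operator? But your resolution fails. If $\tilde\ell_{i,\varepsilon}=0$, linear independence of the $(T^a)^{(\beta)}$ yields the equations $c_{a,\beta}(a_i-\varepsilon)+c_{a,\beta+e_i}=0$ for all $(a,\beta)\in(A,m)$; for precisely the pairs you want to kill, namely $a\in S$ and $|\beta|=m(a)-1$, this equation reads $0\cdot c_{a,\beta}+0=0$ (the first term vanishes because $a_i=\varepsilon$, the second because $(a,\beta+e_i)\notin(A,m)$), so it imposes \emph{no} constraint on $c_{a,\beta}$. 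What the system actually forces is $c_{a,\gamma}=0$ for all $a\notin S$ (downward induction on $|\gamma|$) and for all $(a,\gamma)$ with $a\in S$, $\gamma_i>0$; the coefficients with $a\in S$ and $\gamma_i=0$, including the top-weight ones that obstruct $\ell\in\Lambda_{(A,m_S)}$, survive. So "$\ell$ already belongs to $\Lambda_{(A,m_S)}$" does not follow.

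Worse, the gap cannot be patched as stated, because in this degenerate case the conclusion of (ii) can genuinely fail. Take $n=2$, $A=\{(0,0)\}$, $m((0,0))=2$, so $\Lambda_{(A,m)}=\langle H^{(0,0)},H^{(1,0)},H^{(0,1)}\rangle$. With $\ell=H^{(0,1)}$ we have $d=1$, $C_{(0,1)}=1$, $C_{(1,0)}=0$, so for $i=1$ the hypothesis of (ii) holds; taking $S=\{x_1=0\}$ gives $m_S((0,0))=1$, hence $\Lambda_{(A,m_S)}=\langle T^0\rangle$ and $\deg(A,m_S)=0<1=d$. Any complete write-up must therefore either rule out this configuration (it arises only when every surviving term of $\ell$ sits on $S$ with derivative multi-index transverse to $e_i$) or weaken/restate (ii); since the paper only ever uses the construction and item (i) downstream, nothing else breaks, but your argument for (ii) as written cannot be completed.
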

That is, if a given leading term $\delta(\ell)$ is attained by a linear combination $\ell$ from a multiset $(A,m)$, then we can also attain the leading term of one smaller degree obtained by ``differentiating'' $\delta(\ell)$ with respect to any direction $i$. Furthermore, for any hyperplane $S$ ``orthogonal'' to $e_i$, this new leading term can be obtained from the same multiset, but with the multiplicity of each point on $S$ decreased by one.

Let us give a simple example of an application of this corollary to illustrate the idea. Let $\FF^n=\RR^2$ be the Cartesian plane; since $\RR$ has characteristic zero, we can write ordinary derivatives instead of Hasse derivatives. Let $A\subset \RR^2$ be a finite set of points. Let $\ell\in \Lambda_A$ be some linear combination of the shift operators $\{T^a\}_{a\in A}$, with $\deg(\ell)=d$. Then the lowest degree term $\delta(\ell)$ is $R(\partial_x,\partial_y)$, for some homogeneous polynomial $R\in \RR[Z_1,Z_2]$ of degree $d$. Corollary~\ref{cor:mdrl} then tells us that, even after removing any one vertical line from $A$, we can still find a linear combination of the remaining shift operators with lowest degree term $\partial_{Z_1}R(\partial_x,\partial_y)$. A similar reduction holds for lines of any other direction, with $\partial_{Z_1}$ replaced by the corresponding directional derivative (or after a change of basis). By iterating Corollary~\ref{cor:mdrl}, we can obtain $\partial^\alpha R(\partial_x,\partial_y)\in \Delta_A$ for any $\alpha\in \NN^2$. (Of course, only finitely many of these are nonzero for a given $R$.)

The construction used in the above corollary gives a simple proof of the following basic but important fact.

\begin{lemma}
\label{lem:mlindep}
For any multiset $(A,m)\subset \FF^n$, the set $\{(T^a)^{(\beta)}\}_{(a,\beta)\in(A,m)}$ is linearly independent.
\end{lemma}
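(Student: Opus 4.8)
The plan is to prove the linear independence of $\{(T^a)^{(\beta)}\}_{(a,\beta)\in(A,m)}$ by reducing it to the one-dimensional multidegree lemma (Proposition~\ref{prop:1dml}), using the multi-dimensional reduction lemma (Lemma~\ref{lem:mdrl}) together with Corollary~\ref{cor:mdrl} as the engine for peeling off points one at a time. Suppose for contradiction that we have a nontrivial relation $\ell=\sum_{(a,\beta)\in(A,m)}c_{a,\beta}(T^a)^{(\beta)}=0$ with not all $c_{a,\beta}$ zero; equivalently, $\ell\in\Lambda_{(A,m)}$ is the zero operator, so $\deg(\ell)=\infty$. The idea is to contradict this by exhibiting, from this relation, a witness to linear dependence among multishift operators on a single coordinate line, where Proposition~\ref{prop:1dml} forbids it.

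First I would fix a point $a^\ast\in A$ and a multi-index $\beta^\ast$ of weight $m(a^\ast)-1$ with $c_{a^\ast,\beta^\ast}\neq 0$ (pick any $(a,\beta)$ with $c_{a,\beta}\neq 0$; if $|\beta|<m(a)-1$ we can first show such a top-weight coefficient must also be nonzero, or simply work at the maximal weight present). The strategy is then to apply the reduction construction of Lemma~\ref{lem:mdrl} and the coefficient-killing trick in the remark following Corollary~\ref{cor:mdrl}: for a coordinate direction $i$ and a hyperplane $S$ orthogonal to $e_i$ not containing $a^\ast$, the modified combination $\tilde\ell_{i,\varepsilon}$ has vanishing coefficient on every $(T^a)^{(\beta)}$ with $a_i=\varepsilon$, while preserving (a shift of) the nonzero structure coming from $a^\ast$. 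Iterating this over all coordinate directions and all hyperplanes through the other points of $A$, one successively removes every point of $A$ except $a^\ast$, and reduces the multiplicity at $a^\ast$ down to $1$ using the multiplicity-decreasing form of the remark applied along a direction in which $\beta^\ast$ has a positive component. The surviving relation is then supported on a single operator $(T^{a^\ast})^{(0)}=T^{a^\ast}$ with a nonzero coefficient, hence is a nonzero operator — contradicting $\ell=0$, provided we track that nonzero-ness is genuinely preserved at each step.

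The main obstacle is precisely that bookkeeping: ensuring that each reduction step truly keeps the relevant coefficient nonzero rather than accidentally cancelling it. Corollary~\ref{cor:mdrl} guarantees that the \emph{leading component} $\delta(\ell)$ transforms predictably (it differentiates, and stays nonzero as long as the chosen directional derivative does not annihilate it), so the clean way to run the argument is to phrase everything in terms of $\delta(\ell)$ rather than $\ell$ itself: since $\ell=0$ would mean $\delta(\ell)$ is undefined, it suffices to produce \emph{any} nonzero $\ell'\in\Lambda_{(A,m)}$ supported on a single point line — but more carefully, one shows that if the full family were dependent, then restricting to a submultiset lying on a single coordinate line (obtained after the reductions) would also be dependent, contradicting Proposition~\ref{prop:1dml}(i), which asserts $\dim\Lambda_{(A,m)}=|(A,m)|$ in dimension one. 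I would therefore organize the proof as: (1) reduce to showing that a minimal dependent submultiset must be supported on a coordinate line; (2) if it contains two points differing in coordinate $i$, apply the hyperplane-killing consequence of Corollary~\ref{cor:mdrl} to strictly shrink the support while retaining a nontrivial relation, contradicting minimality; (3) conclude by Proposition~\ref{prop:1dml}. Step (2) is where care is needed to confirm the relation stays nontrivial, and that is the crux of the write-up.
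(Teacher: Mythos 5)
Your overall strategy coincides with the paper's: assume a minimal nontrivial dependence $\ell=0$, use the construction of Lemma~\ref{lem:mdrl} together with the coefficient-killing observation following Corollary~\ref{cor:mdrl} to pass to a strictly smaller multiset, and contradict minimality. However, the proposal defers exactly the point the paper's proof is built around. You write that "care is needed to confirm the relation stays nontrivial" after forming $\tilde\ell_{i,\varepsilon}=\tilde\ell_i-\varepsilon\ell$; this is not residual bookkeeping but the entire content of the lemma, and it needs a specific device. The coefficient of $(T^{a'})^{(\beta')}$ in $\tilde\ell_{i,a_i}$ is $c_{a',\beta'}(a'_i-a_i)+c_{a',\beta'+e_i}$, and the second term can in general cancel the first. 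The paper's fix is to exploit minimality of $|(A,m)|$ to guarantee that every point $a'$ carrying a nonzero coefficient carries one at \emph{top weight} $|\beta'|=m(a')-1$ (otherwise one could already lower $m(a')$ and get a smaller counterexample); for such $\beta'$ the term $c_{a',\beta'+e_i}$ is forced to be zero because $(a',\beta'+e_i)\notin(A,m)$, so choosing $i$ with $a_i\neq a'_i$ leaves the nonzero coefficient $c_{a',\beta'}(a'_i-a_i)$. Without this top-weight selection your reduction could output the zero relation on the smaller multiset and the induction stalls.

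Your endgame also misfires. Once no two points of the support differ in any coordinate you are down to a single point $a$ (not a coordinate line), and even for a multiset on a line the surviving operators are $(T^a)^{(\beta)}$ with $\beta$ ranging over all of $\NN^n$ with $|\beta|\le m(a)-1$, including Hasse derivatives transverse to the line; Proposition~\ref{prop:1dml}, which concerns multisets in $\FF$ and one-variable derivatives, does not apply to this family. The paper handles the single-point case by continuing the same reduction: pick $i$ with $\beta_i>0$ and note that the coefficient of $(T^{a})^{(\beta-e_i)}$ in $\tilde\ell_{i,a_i}$ equals $c_{a,(\beta-e_i)+e_i}=c_{a,\beta}\neq 0$, again contradicting minimality. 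So the skeleton of your argument is the right one, but both the non-triviality of step (2) and the base case require the top-weight/coordinate-direction analysis that the proposal leaves open.
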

\begin{proof}
Suppose the claim is false, and take a counterexample with $n$ minimal, and with $|(A,m)|$ minimal subject to that value of $n$. Then for some choices of $c_{a,\beta}$, not all zero, we have $\ell:=\sum_{(a,\beta)\in (A,m)}c_{a,\beta}(T^a)^{(\beta)}=0$. Fix a choice of $(a,\beta)$ with $c_{a,\beta}\neq 0$. By the minimality of $|(A,m)|$, we can choose such a pair with $|\beta|=m(a)-1$.

For any $i\in [1,n]$, applying the construction in Lemma~\ref{lem:mdrl} and Corollary~\ref{cor:mdrl} yields, for some hyperplane $S\subset \FF^n$ containing $a$, a linear combination $\tilde \ell_{i,a_i} := \tilde \ell_i - a_i \ell\in \Lambda_{(A,m_S)}$ that equals zero. Since $|(A,m_S)|<|(A,m)|$, this is a contradiction unless all coefficients in $\tilde \ell_{i,a_i}$ are zero. But for any $(a',\beta')$, the coefficient of $(T^{a'})^{(\beta')}$ in $\tilde\ell_{i,a_i}$ is $c_{a',\beta'}(a_i'-a_i)+c_{a',\beta'+e_i}$. If there is a second point $a'\neq a$ in $(A,m)$, again by the minimality of $|(A,m)|$ we can choose $\beta'$ such that $|\beta'|=m(a')-1$ and $c_{a',\beta'}\neq 0$. Then, choosing $i$ such that $a_i\neq a_i'$, the coefficient of $(T^{a'})^{(\beta')}$ in $\tilde\ell_{i,a_i}$ is $c_{a',\beta'}(a_i'-a_i)+c_{a',\beta'+e_i}=c_{a',\beta'}(a_i'-a_i)\neq 0$.

Otherwise, $a$ is (up to repetition) the only point in $(A,m)$, so in order to have $\ell=0$, we need $m(a)>1$, so $|\beta|\geq 1$. Then, choosing $i$ such that $\beta_i>0$, the coefficient of $(T^{a})^{(\beta-e_i)}$ in $\tilde\ell_{i,a_i}$ is $c_{a,(\beta-e_i)+e_i}\neq 0$. Thus, in either case, $\tilde\ell_{i,a_i}$ has a nonzero coefficient, and we have arrived at a counterexample with a smaller value of $|(A,m)|$, contradicting our assumption of minimality. Thus no such counterexample exists.
\end{proof}
Lemma~\ref{lem:mlindep} implies that, for any $(A,m)\subset \FF^n$, we have 
\[
\dim(\Lambda_{(A,m)})=\sum_{a\in A} \binom{m(a)+n-1}{n}.
\]

We are now ready to prove an inductive analogue of Proposition~\ref{prop:1ddl}. Given a subspace $W\leq \FF^n$, fix a subspace $W^\perp \leq \FF^n$ such that $\FF^n=W\oplus W^\perp$. The elements $c\in W^\perp$ index the cosets $c+W$ of $W$ in $\FF^n$. 
For the lower bound on $\deg(A)$, we need the following definition. For $n,r\in \NN$ and $q\in \NN\cup \infty$, define
\[
N(n,q,r)=\left|\left\{(d_1,\dots,d_n)\in \NN^n:\: d_i<q,\, \sum_{i=1}^n d_i\leq r\right\}\right|.
\]
This quantity can be interpreted as the number of monomials in $n$ variables of total degree at most $r$, with degree less than $q$ in each variable. We have the trivial upper bound $N(n,q,r)\leq N(n,\infty,r)=\binom{n+r}{r}$.

\begin{prop}[Multi-dimensional degree lemma]
\label{prop:nddl}
Let $A$ be a nonempty subset of $\FF^n$. For $k\in \NN$, let $C_{W,A,k}\subseteq W^\perp$ be the set of $c\in W^\perp$ such that $\deg ((c+W)\cap A)\geq k$.
\begin{enumerate}
    \item[(i)] We have
    \[
        \deg(A)\leq \min_{W\leq \FF^n} \max_{k:\: |C_{W,A,k}|>0} (\deg(C_{W,A,k})+k).
    \]
    \item[(ii)] If for some subspace $W\leq \FF^n$ and integer $r\geq 0$ we have $|A\cap W|>N(\dim(W),|\FF|,r)$, then $\deg(A)>r$.
\end{enumerate}
\end{prop}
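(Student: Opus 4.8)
The plan is to prove the two parts separately, using the multi-dimensional reduction lemma (Corollary~\ref{cor:mdrl}) for the upper bound and a Vandermonde-style non-vanishing argument for the lower bound, both by induction on $n$.

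For part~(i), fix a subspace $W\le \FF^n$ and write $\FF^n = W\oplus W^\perp$. First I would take any $\ell\in\Lambda_A$ with $\deg(\ell)=\deg(A)=:D$ and examine its leading component $\delta(\ell)\in\Delta_A^D$. The idea is to decompose $A$ according to the cosets $c+W$ of $W$: writing $\ell=\sum_{a\in A}c_a T^a$, group the terms by which coset $a$ lies in, so $\ell = \sum_{c\in W^\perp}\ell_c$ where $\ell_c$ ranges over shift operators based at points of $(c+W)\cap A$. After a change of coordinates identifying $W$ with $\FF^{\dim W}$ and $W^\perp$ with the complementary coordinates, each $\ell_c$ is (up to the fixed shift $T^c$, which only contributes in the $W^\perp$ directions) a combination of shift operators on $\FF^{\dim W}$, so its ``$W$-degree'' — the minimal weight in the $W$-coordinates of a nonvanishing Hasse term — is at most $\deg((c+W)\cap A)$ by the definition of $\deg$ of a set; moreover that $W$-degree is at most some $k$ forces $c\in C_{W,A,k}$. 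Now treat the $W^\perp$-coordinates: the operator $\ell$, viewed through its $W^\perp$-component, is a combination of shift operators $T^c$ for $c$ ranging over the finite set $\{c : (c+W)\cap A\neq\emptyset\}\subseteq W^\perp$, with ``coefficients'' that are themselves operators in the $W$-variables. Splitting into the degree-$k$ strata, for each $k$ the relevant $c$'s lie in $C_{W,A,k}$, and Proposition~\ref{prop:1ddl} / the definition of $\deg(C_{W,A,k})$ bounds the minimal $W^\perp$-weight achievable there by $\deg(C_{W,A,k})$. Adding the $W$-weight contribution $k$ and the $W^\perp$-weight contribution $\deg(C_{W,A,k})$, and taking the worst stratum, yields $D\le \max_k(\deg(C_{W,A,k})+k)$; minimizing over $W$ gives~(i). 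The bookkeeping here — carefully separating the $W$- and $W^\perp$-components of a single Hasse monomial $H^{(\alpha)}$ as $H^{(\alpha_W)}$ tensor $H^{(\alpha_{W^\perp})}$ and arguing that the minimal total weight splits as a sum of minima over the strata — is the main obstacle, since one must check that no cancellation between different cosets can lower the degree below what each coset individually allows.

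For part~(ii), I would argue contrapositively in spirit but directly by construction: it suffices to exhibit $\ell\in\Lambda_{A\cap W}\subseteq\Lambda_A$ with $\deg(\ell)\ge r+1$, i.e.\ with $[H^{(\alpha)}]\ell=0$ for every $\alpha$ with $|\alpha|\le r$. Changing coordinates, assume $W=\FF^n$ itself and set $A'=A\cap W$ with $|A'|>N(n,|\FF|,r)$. For $\ell=\sum_{a\in A'}c_a T^a$, Claim~\ref{claim:exp} gives $[H^{(\alpha)}]\ell=\sum_{a\in A'}c_a\, a^\alpha$. Imposing $[H^{(\alpha)}]\ell=0$ for all $\alpha$ with $|\alpha|\le r$ is a homogeneous linear system in the unknowns $(c_a)_{a\in A'}$ whose number of equations is exactly the number of monomials $X^\alpha$ with $|\alpha|\le r$; but since $A'\subseteq\FF^n$, the monomials $X^\alpha$ with some $\alpha_i\ge|\FF|$ restrict to the same function on $A'$ as lower-degree monomials (by $t^{|\FF|}=t$ on $\FF$ when $\FF$ is finite, and trivially no reduction is needed when $\FF$ is infinite, i.e.\ $q=\infty$), so the rank of the system is at most $N(n,|\FF|,r)$, the number of \emph{reduced} such monomials. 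Since $|A'|>N(n,|\FF|,r)$ strictly exceeds this rank, the system has a nontrivial solution, producing the desired $\ell$ with $\deg(\ell)>r$, hence $\deg(A)\ge\deg(A')>r$. The one point needing care is the reduction of monomials modulo $t^{|\FF|}-t$ over a finite field and the verification that this exactly accounts for the bound $N(n,|\FF|,r)$ rather than $\binom{n+r}{r}$; over an infinite field no reduction occurs and $N(n,\infty,r)=\binom{n+r}{r}$ is used directly.
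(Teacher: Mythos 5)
Part (i) is where your proposal has a real gap. You set up the right decomposition $\ell=\sum_{c}\ell_c$ over cosets $c+W$, but the step you yourself flag as ``the main obstacle'' --- that the minimal total weight splits across the $W$- and $W^\perp$-coordinates with no cancellation between cosets --- is precisely the content of the proof, and ``taking the worst stratum'' does not close it. Two concrete problems. First, your implication is reversed: if $\tilde\ell_c:=\ell_c T^{-c}$ has $W$-degree \emph{at most} $k$, that does not force $c\in C_{W,A,k}$; it is a $W$-degree of \emph{at least} $k$ (via $\deg(\tilde\ell_c)\le \deg((c+W)\cap A)$) that does. Second, and relatedly, for a general $\alpha$ the ``coefficient operator'' $\sum_c([H^{(\alpha)}]\tilde\ell_c)\,T^c$ is supported on cosets that need not lie in $C_{W,A,|\alpha|}$, so $\deg(C_{W,A,|\alpha|})$ does not bound its $W^\perp$-weight stratum by stratum. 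The paper's fix is to anchor at the \emph{minimal} $k=|\alpha_0|$ over all nonvanishing Hasse coefficients of all the $\tilde\ell_c$: minimality forces $\tilde\ell_c=0$ for every $c\notin C_{W,A,k}$, so the single nonzero operator $\sum_c([H^{(\alpha_0)}]\tilde\ell_c)\,T^c$ lies in $\Lambda_{C_{W,A,k}}$ and hence has a nonvanishing coefficient $[H^{(\beta_0)}]$ with $|\beta_0|\le\deg(C_{W,A,k})$. Cancellation is then impossible because $\alpha_0$ is supported on the $W$-coordinates and $\beta_0$ on the $W^\perp$-coordinates, so $H^{(\alpha_0)}H^{(\beta_0)}=H^{(\alpha_0+\beta_0)}$ and no other admissible pair $(\alpha,\beta)$ sums to $\alpha_0+\beta_0$; thus $[H^{(\alpha_0+\beta_0)}]\ell\neq 0$ and $\deg(\ell)\le k+\deg(C_{W,A,k})$ for that one value of $k$, which suffices.

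Your part (ii) is correct and takes a mildly different route: the paper bounds $\sum_d\dim(\Delta_{A\cap W}^d)=|A\cap W|$ by showing, via Lemma~\ref{lem:mlindep} and the box construction, that only reduced monomials $H^{(\alpha)}$ with $\alpha_i<|\FF|$ and $|\alpha|\le r$ can occur in degree at most $r$, whereas you bound the rank of the explicit system $\sum_a c_a a^\alpha=0$ by reducing monomials modulo $t^{|\FF|}-t$ as functions on $\FF^n$. Both pivot on the same count $N(\dim W,|\FF|,r)$, and your version is more self-contained since it avoids Lemma~\ref{lem:mlindep}; just work inside $W\cong\FF^{\dim W}$ rather than ``assume $W=\FF^n$,'' and record that a nonzero linear combination of distinct $T^a$ has some nonvanishing Hasse coefficient (so its degree is finite) and that $\Lambda_{A\cap W}\subseteq\Lambda_A$ gives $\deg(A)\ge\deg(A\cap W)$.
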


\begin{proof}
(i) Given $A\subseteq\FF^n$, we wish to show that for every subspace $W\leq \FF^n$ there is some $k\geq 0$ such that $\deg(A)\leq \deg(C_{W,A,k})+k$. Fix such a subspace $W$, and fix a nonzero linear combination $\ell\in \Lambda_A$. Since $\deg(A)$ is invariant under changes of coordinates, without loss of generality let $W$ be the subspace spanned by the first $d$ coordinates for some $d\leq n$, and take $W^\perp$ to be the subspace spanned by the remaining $n-d$ coordinates. If $d=n$, the statement is vacuous with $k=\deg(A)$, so assume $d<n$.

For every $c\in W^\perp$, let $\ell_c$ be the restriction of $\ell$ to the coset $c+W$; that is, the result of keeping only the terms $T^h$ with $h\in c+W$ in the linear combination $\ell$. Let $\tilde \ell_c=\ell_c T^{-c}$, so $\tilde \ell_c\in \Lambda_{W}$, and thus $[H^{(\alpha)}]\tilde \ell_c=0$ unless $\alpha$ is supported on its first $d$ coordinates.

Since $\ell\neq 0$, we can find some $\alpha_0\in \NN^d$ with $k:=|\alpha_0|$ minimal such that $[H^{(\alpha_0)}]\tilde\ell_{c_0}\neq 0$ for some $c_0\in W^\perp$. Then $k\leq \deg((c_0+W)\cap A)$. By the minimality of $k$, we must have $\tilde \ell_{c}=0$ for all $c\notin C_{W,A,k}$. Then
\begin{align*}
    \ell &= \sum_{c\in C_{W,A,k}} \tilde\ell_c T^c \\
    &= \sum_{c\in C_{W,A,k}} \sum_{\alpha\in \NN^d} ([H^{(\alpha)}] \tilde\ell_c) T^c H^{\alpha} \\
    &= \sum_{\alpha\in \NN^d} H^{\alpha} \sum_{c\in C_{W,A,k}} ([H^{(\alpha)}] \tilde\ell_c) T^c.
\end{align*}
The inner sum $\sum_{c\in C_{W,A,k}} ([H^{(\alpha)}] \tilde\ell_c) T^c$ is a linear combination of the operators $T^c$ for $c\in C_{W,A,k}\subseteq W^\perp$, and can therefore be written as a linear combination of terms $H^{(\beta)}$ for $\beta\in \NN^n$ supported on the last $n-d$ coordinates. By definition, for each $\alpha$ where at least one of the coefficients $[H^{(\alpha)}] \tilde\ell_c$ is nonzero, and in particular for $\alpha_0$, there will be some such $\beta_0$ with $|\beta_0|\leq \deg(C_{W,A,k})$ such that
\[
[H^{(\beta_0)}]\left(\sum_{c\in C_{W,A,k}} ([H^{(\alpha_0)}] \tilde\ell_c) T^c\right) \neq 0.
\]
When $\alpha$ and $\beta$ are supported on disjoint sets of coordinates, we have $H^{(\alpha)}H^{(\beta)}=H^{(\alpha+\beta)}$. Thus,
\[
[H^{(\alpha_0+\beta_0)}]\ell = [H^{(\beta_0)}]\left(\sum_{c\in C_{W,A,k}} ([H^{(\alpha_0)}] \tilde\ell_c) T^c\right) \neq 0,
\]
so that $\deg(\ell)\leq |\alpha_0+\beta_0|\leq k+\deg(C_{W,A,k})$. Since this holds for all $\ell\in\Lambda_A$, we have $\deg(A)\leq \deg(C_{W,A,k})+k$ as desired.

(ii) Again, we can without loss of generality change our coordinates to assume $W$ is spanned by the first $m:=\dim(W)$ coordinates. The result then follows from a simple dimension-counting argument: The elements of $\Delta_{A\cap W}$ with degree at most $r$ are contained in the span of ``monomials'' $H^{(\alpha)}$ with $\alpha\in \NN^m$, $|\alpha|\leq r$. Further, if $|\FF|<\infty$, we claim that only terms $H^{(\alpha)}$ where $\alpha$ satisfies $\deg_i(\alpha)<|\FF|$ appear in an element of $\Delta_{A\cap W}$. Indeed, by Lemma~\ref{lem:mlindep}, we have $\sum_{d\geq 0}\dim(\Delta_{\FF^n}^d)=|\FF|^n$, and there are $|\FF|^n$ linearly independent possible leading terms $H^{(\alpha)}$ for $\alpha\in [0,|\FF|-1]^n$, each attained by some linear combination $\ell\in \Lambda_B$ for a box $B=B_1\times\cdots\times B_n$ with $|B_i|=\alpha_i+1$, so no other values of $\alpha$ can appear. 
Thus, the elements of $\Delta_{A\cap W}$ with degree at most $r$ are contained in the span of most $N(\dim(W),|\FF|,r)$ elements.

Again by Lemma~\ref{lem:mlindep}, we have $\sum_{d\geq 0}\dim(\Delta_{A\cap W}^d)=|A\cap W|>N(\dim(W),|\FF|,r)$, so there must be some element $\ell\in \Delta_{A\cap W}$ with $\deg(\ell)>r$, as desired.
\end{proof}

\begin{rem}
    There are several important things to note about Proposition~\ref{prop:nddl}. First, the simplest way to use part~(i) of the Proposition is to apply it iteratively, with subspaces of codimension $1$ at each step. In this case, at each step, $C(W,A,k)$ is a one-dimensional set, so that $\deg(C(W,A,k))=|C(W,A,k)|-1$. 
For example, when $A$ is contained in a box $A_1\times \cdots \times A_n$, we obtain the bound
\[
\deg(A)\leq \sum_{i=1}^n (|A_i|-1), 
\]
which is tight, as seen from a product construction. It is worth noting that a version of Proposition~\ref{prop:nddl}(i) where we restrict to subspaces $W$ of codimension $1$ can be proved by simply iteratively applying the construction in Corollary~\ref{cor:mdrl}, picking one hyperplane $c+W$ with $c\in C_{W,A,k}$ at a time and removing its elements from the linear combination.

Second, in certain important cases, in particular when $\FF=\FF_p$, the quantities $N(n,|\FF|,r)$ that come up in part~(ii) of the Proposition are well-studied. In particular, Lemma~9.2 of \cite{lovaszsauermann2019multicolor} yields, for $k\geq 3$ and $m\geq 2$,
\[
\left|N\left(n,m,\frac{(m-1)n}{k}\right)\right|\leq (\Gamma_{m,k})^n,
\]
where
\[
\Gamma_{m,k}=\min_{0<\gamma<1}\frac{1+\gamma+\cdots+\gamma^{m-1}}{\gamma^{(m-1)/k}},
\]
a quantity satisfying $\Gamma_{m,k}<m$ and $\Gamma_{m,m}=:\gamma_m<4$ \cite{naslund2020egz}. 
This bound becomes relevant when studying the $k$-colored sum-free set problem.
\end{rem}

The following simple observation is a key reason that the set $\Delta_A$ and the constant $\deg(A)$ are valuable invariants in studying problems involving sumsets. Recall that given sets $S$ and $T$, $S\cdot T$ denotes the set of pairwise products of elements in $S$ and $T$.

\begin{lemma}
\label{lem:adddeg}
Given sets $A,B\subseteq \FF^n$, we have
\[
\Delta_A \cdot \Delta_B \subseteq \Delta_{A+B}.
\]
In particular, either we have
\[
\deg(A+B)\geq \deg(A) + \deg(B),
\]
or else all $\ell_A\in \Lambda_A$, $\ell_B\in \Lambda_B$ attaining these maximal degrees satisfy $\ell_A\cdot \ell_B=0$.
\end{lemma}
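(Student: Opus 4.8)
The plan is to establish the containment $\Delta_A \cdot \Delta_B \subseteq \Delta_{A+B}$ directly from the definitions, and then derive the degree inequality as an immediate consequence. The key structural fact is that shift operators multiply nicely: $T^a T^b = T^{a+b}$, so for $\ell_A = \sum_{a\in A} c_a T^a \in \Lambda_A$ and $\ell_B = \sum_{b\in B} d_b T^b \in \Lambda_B$, the product $\ell_A \ell_B = \sum_{a,b} c_a d_b T^{a+b}$ lies in $\Lambda_{A+B}$. So the composition of linear combinations of shifts is again a linear combination of shifts, from the sumset.

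Next I would track degrees under this composition. Writing $\ell_A = \sum_{d\geq \deg(\ell_A)} (\ell_A)_{(d)}$ and similarly for $\ell_B$ in terms of their Hasse-derivative components, I want to understand the low-degree part of $\ell_A \ell_B$. The crucial point is that composing $H^{(\alpha)}$ with $H^{(\beta)}$ produces $\binom{\alpha+\beta}{\alpha} H^{(\alpha+\beta)}$, which has weight $|\alpha| + |\beta|$; so composing a degree-$d$ homogeneous piece with a degree-$d'$ homogeneous piece yields something supported in degree exactly $d+d'$ (or possibly zero in positive characteristic, if binomial coefficients vanish). Hence every $H^{(\gamma)}$ appearing in $\ell_A \ell_B$ has $|\gamma| \geq \deg(\ell_A) + \deg(\ell_B)$, so $\deg(\ell_A \ell_B) \geq \deg(\ell_A) + \deg(\ell_B)$, and the degree-exactly-$(\deg(\ell_A)+\deg(\ell_B))$ component of $\ell_A\ell_B$ equals $\delta(\ell_A)\delta(\ell_B)$ — a product of the two leading components. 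When $\delta(\ell_A)\delta(\ell_B) \neq 0$, this shows $\delta(\ell_A)\delta(\ell_B) \in \Delta_{A+B}^{\deg(\ell_A)+\deg(\ell_B)}$, giving the containment $\Delta_A \cdot \Delta_B \subseteq \Delta_{A+B}$; and when it is zero, the product $\ell_A\ell_B$ may collapse, which is exactly the exceptional case flagged in the "In particular" clause.

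To finish, I would choose $\ell_A \in \Lambda_A$ with $\deg(\ell_A) = \deg(A)$ and $\ell_B \in \Lambda_B$ with $\deg(\ell_B) = \deg(B)$. If $\delta(\ell_A)\delta(\ell_B) \neq 0$, then $\ell_A\ell_B \in \Lambda_{A+B}$ has a nonzero leading component of degree exactly $\deg(A)+\deg(B)$, so $\deg(A+B) \geq \deg(A)+\deg(B)$. Otherwise, any such pair of maximal-degree combinations has $\ell_A\ell_B = 0$: indeed if the leading components multiply to zero then $\ell_A \ell_B$ has degree strictly larger than $\deg(A)+\deg(B)$, but by part (i)-type reasoning... — actually here I should be careful, because $\ell_A\ell_B$ having larger degree does not automatically make it zero. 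The cleaner statement is: either some maximal pair has $\delta(\ell_A)\delta(\ell_B)\neq 0$ (then we win the inequality), or every maximal pair has $\delta(\ell_A)\delta(\ell_B) = 0$, which is the stated alternative. Since $\delta(\ell_A), \delta(\ell_B)$ are the leading components and their product being zero does not force $\ell_A \cdot \ell_B = 0$ in general, I would instead phrase the dichotomy exactly as written: either the inequality holds, or all maximal-degree combinations satisfy $\ell_A \cdot \ell_B = 0$ — wait, that is still not quite what the vanishing of leading components gives.

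The main obstacle, then, is pinning down precisely the logical content of the "In particular" clause. The honest statement that falls out of the computation is: if $\ell_A, \ell_B$ attain the maximal degrees and $\delta(\ell_A)\delta(\ell_B) \neq 0$, then $\deg(A+B) \geq \deg(A)+\deg(B)$; so the contrapositive is that if $\deg(A+B) < \deg(A)+\deg(B)$, then every maximal pair has $\delta(\ell_A)\delta(\ell_B) = 0$. To upgrade "$\delta(\ell_A)\delta(\ell_B)=0$" to "$\ell_A\ell_B = 0$" as the statement claims, I would need an additional argument — likely exploiting the reduction lemma (Corollary~\ref{cor:mdrl}) to show that if $\ell_A\ell_B \neq 0$ then one could extract a nonzero leading term of degree $\geq \deg(A)+\deg(B)$ after suitable differentiation, contradicting maximality; alternatively, the intended reading may simply be that $\delta(\ell_A)\delta(\ell_B)=0$ is being (loosely) identified with $\ell_A\ell_B$ having no term of the expected leading degree. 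I would sort out which interpretation is intended and, if the strong form is meant, invoke the multi-dimensional reduction machinery to bridge the gap; the first paragraph's containment argument is routine, so essentially all the care goes into this final dichotomy.
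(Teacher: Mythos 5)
Your first paragraph is correct and is exactly the paper's argument for the containment: $T^aT^b=T^{a+b}$ puts $\ell_A\cdot\ell_B$ in $\Lambda_{A+B}$, composing homogeneous Hasse-derivative components of weights $d$ and $d'$ lands in weight exactly $d+d'$, so $\deg(\ell_A\cdot\ell_B)\geq\deg(\ell_A)+\deg(\ell_B)$ and the component of $\ell_A\cdot\ell_B$ in degree $\deg(\ell_A)+\deg(\ell_B)$ is $\delta(\ell_A)\cdot\delta(\ell_B)$, which therefore lies in $\Delta_{A+B}$ (whether or not it vanishes).

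The gap is in the ``in particular'' clause, and it comes from running the dichotomy on the wrong quantity. You split on whether $\delta(\ell_A)\delta(\ell_B)=0$ and then tried, unsuccessfully, to upgrade that to $\ell_A\cdot\ell_B=0$; no such upgrade is needed, and the reduction lemma plays no role. Run the dichotomy directly on whether $\ell_A\cdot\ell_B=0$. The observation you are missing is that $\deg(A+B)$ is by definition the \emph{maximum} of $\deg(\ell)$ over nonzero $\ell\in\Lambda_{A+B}$, so to certify $\deg(A+B)\geq\deg(A)+\deg(B)$ you do not need to exhibit an element of degree \emph{exactly} $\deg(A)+\deg(B)$; any nonzero element of $\Lambda_{A+B}$ of degree at least that much suffices. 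So take $\ell_A,\ell_B$ attaining the maximal degrees and suppose $\ell_A\cdot\ell_B\neq 0$. By your own computation $\deg(\ell_A\cdot\ell_B)\geq\deg(A)+\deg(B)$, and since $\ell_A\cdot\ell_B$ is a nonzero element of $\Lambda_{A+B}$ we get $\deg(A+B)\geq\deg(\ell_A\cdot\ell_B)\geq\deg(A)+\deg(B)$ regardless of whether the leading components cancel --- if they do cancel, the product merely has strictly larger degree, which only strengthens the inequality. Contrapositively, if $\deg(A+B)<\deg(A)+\deg(B)$, then every maximal pair must satisfy $\ell_A\cdot\ell_B=0$, which is exactly the stated alternative and is how the paper concludes (``applying this argument when $\deg(\ell_A)$ and $\deg(\ell_B)$ are maximal'').
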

In effect, the invariant $\deg(A)$ offers a multi-dimensional analogue of set size that behaves well in Cauchy-Davenport-like settings -- in $1$ dimension, the analogue of the alternative condition given is that $|A|+|B|\geq p+2$. Unfortunately, the edge cases in more than one dimension are significantly more complicated; often it will be worth considering the overall structure of $\Delta_A$ rather than just the quantity $\deg(A)$.
\begin{proof}
Given $\delta_A\cdot \delta_B \in \Delta_A\cdot \Delta_B$, let $\ell_A\in \Lambda_A$, $\ell_B\in \Lambda_B$ be such that $\delta(\ell_A)=\delta_A$, $\delta(\ell_B)=\delta_B$. Observe that the product $\ell_A\cdot \ell_B$ is a linear combination of terms of the form $T^{a+b}$ for $a\in A$ and $b\in B$, and is thus contained in $\Lambda_{A+B}$. We have $(\ell_A\cdot\ell_B)_{(\deg(\ell_A)+\deg(\ell_B))}=\delta_A\cdot \delta_B$, and $\deg(\ell_A\cdot\ell_B)\geq \deg(\ell_A)+\deg(\ell_B)$. So, either $\deg(\ell_A\cdot\ell_B)= \deg(\ell_A)+\deg(\ell_B)$, and thus $\delta_A\cdot \delta_B \in \Delta_{A+B}$, or $\deg(\ell_A\cdot\ell_B)> \deg(\ell_A)+\deg(\ell_B)$, so $\delta_A\cdot \delta_B=0\in \Delta_{A+B}$. This shows the first part of the lemma. In particular, applying this argument when $\deg(\ell_A)$ and $\deg(\ell_B)$ are maximal shows the second part of the lemma.
\end{proof}

\section{Applications of the Multidimensional Shift-Operator Method}
\label{sec:applymd}

\subsection{The Croot-Lev-Pach method}

In Section~\ref{sec:cd} and Section~\ref{sec:apply1d}, we drew a connection between the one-dimensional version of our method and the Combinatorial Nullstellensatz, first by highlighting the similarity in some of their applications and later by proving the nonvanishing lemma form of the Nullstellensatz itself using our method. For the multidimensional version of our method, we demonstrate a similar connection to the Croot-Lev-Pach-Ellenberg-Gijswijt polynomial method. The Erd\H{o}s-Ginzburg-Ziv problem itself, a special case of which was an early inspiration for that method, serves as a convenient and enlightening example to illustrate this connection.

Recall that the \emph{Erd\H{o}s-Ginzburg-Ziv constant} for $\FF_p^n$ is defined to be the smallest integer $s=s(\FF_p^n)$ such that every sequence of $s$ (not necessarily distinct) elements of $\FF_p^n$ contains $p$ elements summing to zero. Thus, $s(\FF_p^n)-1$ is the length of the longest sequence that does not contain $p$ elements summing to zero. Following \cite{zakharov2020convexEGZ}, we define a related quantity called the \emph{weak Erd\H{o}s-Ginzburg-Ziv constant}, $\mathfrak{w}(\FF_p^n)$, to be the size of the largest subset $A\subset \FF_p^n$ such that $p$ elements $a_1,\dots,a_p\in A$ sum to $0$ if and only if $a_1=\cdots=a_p$. It is easy to check that $s(\FF_p^n)\ge (p-1)\mathfrak{w}(\FF_p^n)+1$.

When $p=3$, a convenient coincidence occurs: Three distinct elements sum to $0$ in $\FF_3^n$ if and only if they lie on a line, and in particular form an arithmetic progression. Thus, the Erd\H{o}s-Ginzburg-Ziv problem for $\FF_3^n$ coincides with the widely studied problem of bounding the size of sets with no nontrivial $3$-term arithmetic progressions. 
It was in this context that Ellenberg and Gijswijt made their breakthrough using the then-new polynomial method of Croot, Lev, and Pach. Recall that $N(n,q,r)$ is the number of monomials in $n$ variables of total degree at most $r$ and degree less than $q$ in each variable.

\begin{theorem}[\cite{ellenberggijswijt}]\label{thm:EGegz}
    Let $A$ be a subset of $\FF_p^n$ containing no nontrivial $3$-term arithmetic progression. Then $|A|\le 3N(n,p,(p-1)n/3)$.
\end{theorem}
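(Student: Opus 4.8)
\emph{Proof proposal.} The plan is to transfer the $3$-term progression condition onto a single ``delta polynomial'' and then run a degree count of the kind underlying Lemma~\ref{lem:adddeg}. We may assume $p$ is odd (for $p=2$ the notion of a nontrivial $3$-term progression degenerates). Let $E(X)=\prod_{i=1}^n(1-X_i^{p-1})\in\FF_p[X_1,\dots,X_n]$; then $\deg E=(p-1)n$ and, on $\FF_p^n$, $E$ is the indicator function of $\{0\}$, so $T^hE$ restricts on $\FF_p^n$ to the indicator function of $\{-h\}$. Every $\ell\in\Lambda_A$ is $\ell=\sum_{a\in A}c_aT^a$ with the $T^a$ independent (Lemma~\ref{lem:mlindep}), and by Claim~\ref{claim:exp} the linear map $\sigma$ defined by $\sigma\bigl(\sum_b c_b T^b\bigr)=\sum_b c_b T^{-2b}$ satisfies $[H^{(\alpha)}]\sigma(\ell)=(-2)^{|\alpha|}[H^{(\alpha)}]\ell$, so $\sigma$ preserves $\deg$ (and scales the leading component by a unit). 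Given $\ell_1,\ell_2,\ell_3\in\Lambda_A$ I would form
\[
\mathcal L=\ell_1\cdot\sigma(\ell_2)\cdot\ell_3=\sum_{a,b,c\in A}c^1_ac^2_bc^3_c\,T^{a-2b+c},
\]
so that, using the description of $T^hE$ above together with the hypothesis on $A$ (whose only effect is that $a-2b+c=0$ with $a,b,c\in A$ forces $a=b=c$),
\[
\mathcal L(E)(0)=\sum_{a,b,c\in A}c^1_ac^2_bc^3_c\,\mathbf 1[a-2b+c=0]=\sum_{a\in A}c^1_ac^2_ac^3_a.
\]

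On the other hand, composition of operators adds degrees — this is the inequality $\deg(\ell\ell')\ge\deg\ell+\deg\ell'$ appearing inside the proof of Lemma~\ref{lem:adddeg} — so $\deg\mathcal L\ge\deg\ell_1+\deg\ell_2+\deg\ell_3$, using that $\sigma$ preserves degree. Since each $H^{(\alpha)}$ lowers degree by $|\alpha|$, we have $\deg(\mathcal L(E))\le\deg E-\deg\mathcal L$, hence whenever $\deg\ell_1+\deg\ell_2+\deg\ell_3>(p-1)n$ we get $\mathcal L(E)=0$ and therefore $\sum_{a\in A}c^1_ac^2_ac^3_a=0$. Now filter $\Lambda_A$ by the subspaces $V_d=\{\ell\in\Lambda_A:\deg\ell\ge d\}$; the map $\ell\mapsto\ell_{(d)}$ induces an isomorphism $V_d/V_{d+1}\cong\Delta_A^d$, so $\operatorname{codim}_{\Lambda_A}V_d=\sum_{e<d}\dim\Delta_A^e$, and since $\FF_p$ is finite every element of $\Delta_A$ is a combination of the operators $H^{(\alpha)}$ with $\alpha_i<p$ for every $i$ (as in the proof of Proposition~\ref{prop:nddl}(ii)), whence $\operatorname{codim}_{\Lambda_A}V_d\le N(n,p,d-1)$. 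Taking $t=\lfloor(p-1)n/3\rfloor+1$, so that $3t>(p-1)n$ and $t-1=\lfloor(p-1)n/3\rfloor$, the previous observation shows that the diagonal trilinear form $B(\ell_1,\ell_2,\ell_3)=\sum_{a\in A}c^1_ac^2_ac^3_a$ on $\Lambda_A\cong\FF_p^A$ vanishes identically on $V_t\times V_t\times V_t$.

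It then remains to invoke the purely linear-algebraic fact that a diagonal trilinear form on $W\times W\times W$ cannot vanish identically on $U_1\times U_2\times U_3$ unless $\operatorname{codim}U_1+\operatorname{codim}U_2+\operatorname{codim}U_3\ge\dim W$ — equivalently, that the slice rank of the identity tensor on $A$ equals $|A|$ \cite{tao2016slicerankblog}. Applying this with $U_1=U_2=U_3=V_t$ gives $|A|=\dim\Lambda_A\le 3\operatorname{codim}_{\Lambda_A}V_t\le 3N(n,p,t-1)=3N(n,p,(p-1)n/3)$, as required.

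The step I expect to be the main obstacle is conceptual rather than computational: one cannot simply argue $\deg(A-2A+A)\ge 3\deg(A)$ and then apply Proposition~\ref{prop:nddl}(ii), because that inequality fails precisely when the leading components of the three factors multiply to $0$ — which happens here generically, since a product of three Hasse derivatives whose total weight is close to $(p-1)n$ overflows $p$ in some coordinate in characteristic $p$. This is exactly the alternative branch of Lemma~\ref{lem:adddeg}, and it is the source of the (known-to-be-lossy) factor of $3$. Routing the argument through $E$ and the diagonal form $B$ is what converts this obstruction into an application of the slice rank lower bound for diagonal tensors, which is the only ingredient external to the framework developed above.
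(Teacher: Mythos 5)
Your argument is correct, and its skeleton matches the paper's: both form a product of linear combinations of shift operators over $A$ and $-2\cdot A$, use additivity of $\deg$ under composition, kill the product when the total degree exceeds $(p-1)n$, extract the ``diagonal'' coefficient via the no-progression hypothesis, and bound the relevant codimension by $N(n,p,(p-1)n/3)$ using the $\alpha_i<p$ restriction from the proof of Proposition~\ref{prop:nddl}(ii). (Your evaluation $\mathcal L(E)(0)$ is literally the functional $[T^0]\mathcal L$, and your degree count on $\mathcal L(E)$ is a stand-in for the paper's direct appeal to Proposition~\ref{prop:nddl}(i) to conclude $\mathcal L=0$; these are equivalent.) Where you genuinely diverge is the endgame. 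You keep three independent combinations $\ell_1,\sigma(\ell_2),\ell_3$, view $\sum_a c^1_ac^2_ac^3_a$ as a diagonal trilinear form vanishing on $V_t\times V_t\times V_t$, and import the slice-rank lower bound for diagonal tensors from \cite{tao2016slicerankblog} as an external black box, which costs you the factor $3$. The paper instead proves the needed instance of that lemma by hand inside the framework: it fixes an ordering of $A$, defines $S_+$ (elements realizable as the \emph{largest} support element of some $\ell_+\in\Lambda_A$ of degree $>(p-1)n/3$) and $S_-$ (the analogous \emph{smallest} support elements for $\Lambda_{-2\cdot A}$), shows $|A\setminus S_\pm|\le N(n,p,(p-1)n/3)$, and picks $b\in S_+\cap S_-$ so that the diagonal sum $[T^0](\ell_+^2\ell_-)=\sum_a([T^a]\ell_+)^2[T^{-2a}]\ell_-$ collapses to the single nonzero term at $a=b$. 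That trick needs only two independent combinations (using $\ell_+^2\ell_-$) and so actually proves the stronger bound $|A|\le 2N(n,p,(p-1)n/3)$, besides keeping the proof self-contained. Your version buys modularity and makes the slice-rank connection explicit; the paper's buys the better constant and independence from the tensor lemma. Your caveat restricting to odd $p$ (so that $-2$ is a unit and $\sigma$ preserves degree) is appropriate and harmless.
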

In the case $p=3$, their result implies $\mathfrak{w}(\FF_3^n)=o(2.756^n)$. Since three elements of $\FF_3^n$ summing to zero are either all distinct or all equal, this also implies $s(\FF_3^n)=o(2.756^n)$.

The original proof of Theorem~\ref{thm:EGegz} essentially proceeds as follows: Let $d=2(p-1)n/3$, and let $V\subseteq P_n^d \cap P_n^{p-1,\dots,p-1}$ be the space of polynomials of total degree at most $d$, and degree in each variable at most $p-1$, that vanish on $\FF_p^n\setminus (2\cdot A)$. (Here $2\cdot A=\{2a:\: a\in A\}$.) Pick $f\in V$ with maximal support $|\Supp(f)|\ge \dim V \ge N(n,p,d)-|\FF_p^n\setminus (2\cdot A)|$. By assumption, for $x,y\in A$, we have $x+y\in 2\cdot A$ if and only if $x=y$, so $f(x+y)=0$ for $x\neq y \in A$. Now consider the map $F:A\times A\to \FF_p$ given by $F(x,y)=f(x+y)$. Viewing this map as an $|A|\times |A|$ matrix, it is diagonal, and thus has rank equal to its number of nonzero diagonal entries, which is $|\Supp(f)|$. On the other hand, $F$ agrees as a function with $f(x+y)$ for $f\in P_n^d$, which can be written in the form
\[
f(x+y)=\sum_{\substack{\alpha \in \NN^n,\\ |\alpha|\le d/2,\, \alpha_i<p \forall i\in [n]}} x^\alpha p_\alpha(y)+ \sum_{\substack{\beta \in \NN^n,\\ |\beta|\le d/2,\, \beta_i<p \forall i\in [n]}} y^\beta q_\beta(x),
\]
for some polynomials $p_\alpha$ and $q_\beta$. Each term in the sum corresponds to a rank $1$ matrix, so $\rank(F)\le 2N(n,p,d/2)$. Combining this with the lower bound on $|\Supp(f)|$ and the observation that $p^n=N(n,p,d/2)+N(n,p,(p-1)n-d/2)$ indeed gives
\[
|A|\le 3N(n,p,(p-1)n/3).
\]

Soon after this original proof, Tao \cite{tao2016slicerankblog} presented a more symmetric formulation of the proof over $\FF_3$. He directly considers the function $F:A^3\to \FF_3$ given by
\[
F(x,y,z)=\begin{cases}
    1 & \text{if } x+y+z=0 \\ 0 & \text{otherwise}.
\end{cases}
\]
This can be viewed as a diagonal $3$-tensor with size $|A|\times |A|\times |A|$. To proceed, he defines a notion of rank for tensors called \emph{slice rank}: A nonzero $d$-dimensional tensor $F$ has slice rank $1$ if it can be written in the form $F=f(x_i)g(x_1,\dots,x_{i-1},x_{i+1},\dots,x_d)$. In general, the slice rank of a tensor is the smallest number of tensors of slice rank $1$ that generate it as a linear combination. As in the matrix setting, a key observation is that the slice rank of a diagonal tensor equals its number of nonzero entries, which is $|A|$ in this case. On the other hand, we can write an explicit polynomial agreeing with $F$:
\[
F(x,y,z)=\prod_{i=1}^n (1-(x_i+y_i+z_i)^2).
\]
As before, this right hand side is a sum of terms of the form $x^\alpha g(y,z)$ for $|\alpha|\le \frac{2n}{3},\, \alpha_i < 3=p \: \forall i$, and similar terms with the variables permuted. Each of these terms gives a tensor of slice rank $1$, so we again end up with the desired bound $|A|\le 3N(n,3,2n/3)$.

We will now demonstrate how a proof of Theorem~\ref{thm:EGegz} arises from our shift operator framework. Interestingly, this proof contains elements evocative of both the asymmetric and symmetric versions of the original proof.

\begin{proof}[Proof of Theorem~\ref{thm:EGegz}]
Suppose that $|A|>2N(n,p,(p-1)n/3)$. Fix an arbitrary ordering of $A$, and let $S_+$ be the set of $a\in A$ such that for some $\ell\in \Lambda_A$ with $\deg(\ell)>(p-1)n/3$, $a$ is the largest element of $A$ such that $[T^a]\ell\neq 0$. Likewise, let $S_-$ be the set of $a\in A$ such that for some $\ell\in \Lambda_{-2\cdot A}$ with $\deg(\ell)>(p-1)n/3$, $a$ is the smallest element of $A$ such that $[T^{-2a}]\ell\neq 0$. By Proposition~\ref{prop:nddl}(ii) with $W=\FF_p^n$, any subset $A'\subseteq A$ with $|A'|>N(n,p,(p-1)n/3)$ satisfies $\deg(A')>(p-1)n/3$. This shows that $|A\setminus S_+|\le N(n,p,(p-1)n/3)$, and similarly $|A\setminus S_-|\le N(n,p,(p-1)n/3)$. In particular, $|S_+\cap S_-|\ge 1$. Thus, we can pick $\ell_+\in \Lambda_A, \ell_-\in \Lambda_{-2\cdot A}$ with $\deg(\ell_+),\deg(\ell_-)>(p-1)n/3$ such that for some element $b\in A$, $b$ is the largest element with $[T^b]\ell_+\neq 0$ and the smallest element with $[T^{-2b}]\ell_-\neq 0$.

Now consider the linear combination $\ell=\ell_+^2 \ell_-\in \Lambda_{A+A+(-2\cdot A)}\subseteq \Lambda_{\FF_p^n}$. We have $\deg(\ell)\ge 2\deg(\ell_+)+\deg(\ell_-)> (p-1)n$, implying that $\ell=0$ by Proposition~\ref{prop:nddl}(i) applied to $\FF_p^n$ (see the remark under the proof of that result). On the other hand, by assumption, the only triples $(a,b,c)\in A^3$ with $a+b-2c=0$ are the ones with $a=b=c$. So,
\[
[T^0] \ell = \sum_{a\in A} ([T^a] \ell_+)^2 ([T^{-2a}]\ell_-)=([T^b] \ell_+)^2 ([T^{-2b}]\ell_-) \neq 0,
\]
a contradiction. Thus we indeed must have $|A|\le 2N(n,p,(p-1)n/3)$ as desired.
\end{proof}
Just like the proof in~\cite{ellenberggijswijt}, this proof can be easily modified to show a similar result for sets without nontrivial solutions to $\alpha a_1+\beta a_2+\gamma a_3=0$, for any $\alpha,\beta,\gamma$ summing to zero. The same proof also generalizes to the setting of $k$-colored sum-free sets. Recall that a \emph{$k$-colored sum-free set} in $\FF_p^d$ (sometimes called a \emph{multiplicative $k$-matching}, as in \cite{petrov2016groupring}) is a collection of $k$-tuples $(x_{1,j},\dots,x_{k,j})_{j=1}^M$ such that for all $j_1,\dots,j_k\in [1,M]$, we have
\[
\sum_{i=1}^k x_{i,j_i}=0 \text{ if and only if }j_1=\cdots=j_k.
\]

The following result is the $\FF_p$ case of the best known upper bound on sizes of $k$-colored sum-free sets.
\begin{theorem}[see \cite{lovaszsauermann2019multicolor}]
\label{thm:kcolorsumfree}
For every prime $p$ and every integer $k\geq 3$, the size of any $k$-colored sum-free set in $\FF_p^n$ is at most $(\Gamma_{p,k})^n$, where
\[
\Gamma_{p,k}=\min_{0<\gamma<1} \frac{1+\gamma+\cdots+\gamma^{p-1}}{\gamma^{(p-1)/k}}.
\]
\end{theorem}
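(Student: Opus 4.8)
The plan is to adapt the argument just given for Theorem~\ref{thm:EGegz}, which is essentially the $k=3$ case with the coefficients $(1,1,-2)$, to a general $k$-colored sum-free set $(x_{1,j},\dots,x_{k,j})_{j=1}^M$. For each color $i\in[1,k]$, let $X_i=\{x_{i,j}:\ j\in[1,M]\}\subseteq\FF_p^n$ (we may assume, by a standard argument, that within each color the entries are distinct, so $|X_i|=M$). Set $r=(p-1)n/k$. By Proposition~\ref{prop:nddl}(ii) applied with $W=\FF_p^n$, any subset $X_i'\subseteq X_i$ with $|X_i'|>N(n,p,r)$ has $\deg(X_i')>r$; using the bound $N(n,p,r)\le(\Gamma_{p,k})^n$ from Lemma~9.2 of \cite{lovaszsauermann2019multicolor} quoted in the remark after Proposition~\ref{prop:nddl}, it suffices to derive a contradiction from the assumption $M>N(n,p,r)$.

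The key step is to pick, for each color $i$, a linear combination $\ell_i\in\Lambda_{X_i}$ with $\deg(\ell_i)>r$, in such a way that the products $\ell_1\cdots\ell_k$ still have a surviving diagonal term. Following the proof of Theorem~\ref{thm:EGegz}, fix an ordering of $[1,M]$ and, for each $i$, let $S_i$ be the set of indices $j$ such that for some $\ell\in\Lambda_{X_i}$ with $\deg(\ell)>r$, the index $j$ is \emph{extremal} (largest, say, for $i<k$; smallest for $i=k$) among the indices with $[T^{x_{i,j}}]\ell\neq 0$. As in that proof, $|[1,M]\setminus S_i|\le N(n,p,r)<M$, so $\bigcap_{i=1}^k S_i\neq\emptyset$; pick a common index $j_0$ in it, together with the corresponding $\ell_i\in\Lambda_{X_i}$ for which $j_0$ is extremal and $[T^{x_{i,j_0}}]\ell_i\neq 0$. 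Now form $\ell=\ell_1\cdots\ell_k\in\Lambda_{X_1+\cdots+X_k}\subseteq\Lambda_{\FF_p^n}$. By Lemma~\ref{lem:adddeg} (iterated), $\deg(\ell)\ge\sum_{i=1}^k\deg(\ell_i)>k\cdot r=(p-1)n$, so Proposition~\ref{prop:nddl}(i) applied to $\FF_p^n$ forces $\ell=0$. On the other hand, evaluating the coefficient of $T^0$ in $\ell$ gives
\[
[T^0]\ell=\sum_{\substack{j_1,\dots,j_k\in[1,M]\\ x_{1,j_1}+\cdots+x_{k,j_k}=0}}\prod_{i=1}^k[T^{x_{i,j_i}}]\ell_i,
\]
and the $k$-colored sum-free condition says the only terms are the diagonal ones $j_1=\cdots=j_k$. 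The extremality of $j_0$ in each color is what guarantees this diagonal sum does not vanish: a coefficient-chasing argument (identical in spirit to the one in the proof of Theorem~\ref{thm:EGegz}, where $b$ being simultaneously the largest index for $\ell_+$ and smallest for $\ell_-$ isolated a single nonzero term) shows $[T^0]\ell=\prod_{i=1}^k[T^{x_{i,j_0}}]\ell_i\neq 0$, contradicting $\ell=0$.

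The main obstacle is the combinatorial bookkeeping in this last step: with $k$ factors rather than $2$ or $3$, one must argue carefully that some choice of extremality conventions across the $k$ colors leaves exactly one surviving diagonal term. The clean way to do this is to order the colors, let $\ell_i$ be extremal (largest index with nonzero coefficient) for $i=1,\dots,k-1$ and smallest for $i=k$, and note that if $x_{1,j_1}+\cdots+x_{k,j_k}=0$ with all $j_\ell$ in the support of the respective $\ell_i$, then by the colored sum-free hypothesis all $j_\ell$ are equal; the extremality then pins down which common value contributes. One should also double-check the reduction that lets us take the color classes to have distinct elements (repeated entries within a color can only decrease the relevant sizes), and verify the edge case handling in Lemma~\ref{lem:adddeg}—but since we land on $\ell=0$ versus $[T^0]\ell\neq 0$ directly, the alternative ``$\ell_A\cdot\ell_B=0$'' branch of that lemma is exactly the contradiction we want rather than an obstruction.
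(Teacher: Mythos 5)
Your overall strategy is exactly the paper's: extremal-index selection within each color class, a product $\ell=\ell_1\cdots\ell_k$ whose degree exceeds $(p-1)n$ and hence vanishes, and a single surviving diagonal term $\prod_i[T^{x_{i,j_0}}]\ell_i\neq 0$ pinned down by making $j_0$ maximal for the first $k-1$ colors and minimal for the last. The extremality bookkeeping you worry about is fine: any diagonal index $j$ contributing to $[T^0]\ell$ must satisfy $j\le j_0$ (maximality in color $1$) and $j\ge j_0$ (minimality in color $k$), so $j=j_0$.

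However, there is a genuine gap in the counting. From $\lvert[1,M]\setminus S_i\rvert\le N(n,p,r)$ for each of the $k$ colors, the union bound only gives $\bigl\lvert\bigcap_{i=1}^k S_i\bigr\rvert\ge M-kN(n,p,r)$, so the intersection is nonempty only under the assumption $M>kN(n,p,r)$, not $M>N(n,p,r)$ as you claim. The contradiction therefore yields $M\le kN(n,p,r)\le k(\Gamma_{p,k})^n$, which is weaker than the stated bound by a factor of $k$. The paper removes this factor with a tensor power trick: for each $r\ge 1$ the $r$-fold concatenation $\{(x_{1,j_1},\dots,x_{1,j_r}),\dots,(x_{k,j_1},\dots,x_{k,j_r})\}_{(j_1,\dots,j_r)\in[1,M]^r}$ is a $k$-colored sum-free set of size $M^r$ in $\FF_p^{nr}$, whence $M^r\le k(\Gamma_{p,k})^{nr}$ and $M\le\lim_{r\to\infty}\sqrt[r]{k}\,(\Gamma_{p,k})^n=(\Gamma_{p,k})^n$. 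Your proposal omits this step entirely, and without it (or a sharper intersection argument, which the union bound does not provide) you do not reach the theorem as stated.
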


It is noted in \cite{lovaszsauermann2019multicolor} that $\Gamma_{p,k}<p$ for all $k$; likewise, \cite{naslund2020egz} notes that $\Gamma_{p,p}=:\gamma_p<4$. Here we present a proof of Theorem~\ref{thm:kcolorsumfree} in the language of shift operators. This proof is essentially equivalent to one suggested in \cite{petrov2016groupring}, but our framework provides a somewhat different perspective.

\begin{proof}
Let $A=\{(x_{1,j},\dots,x_{k,j})_{j=1}^M\}$ be a $k$-colored sum-free set in $\FF_p^n$, and let $A_i=\{x_{i,j}\}_{j=1}^M$ for $1\leq i\leq k$. By construction, each $A_i$ must consist of $M$ distinct elements. As in our proof of Theorem~\ref{thm:EGegz}, for $1\leq i\leq k$, we consider the set $S_{i,+}$ (resp. $S_{i,-}$) of indices $j\in [1,M]$ such that for some $\ell_i\in \Lambda_{A_i}$ with $\deg(\ell_i)> \frac{(p-1)n}{k}$, $j$ is the maximal (resp. minimal) index such that $x_{i,j}$ has a nonzero coefficient in $\ell_i$. By Proposition~\ref{prop:nddl}(ii), any set $B\subseteq \FF^n$ with $|B|\geq N(n,p,r)+1$ satisfies $\deg(B)>r$. Applying this result with $r=\frac{(p-1)n}{k}$ to $\{x_{i,j}\}_{j\notin S_{i,+}}$, we see that
\[
|S_{i,+}|\geq M-N\left(n,p,\frac{(p-1)n}{k}\right),
\]
for $1\leq i\leq k$, and similarly for $|S_{i,-}|$. Then we have
\[
|S_{1,+}\cap\cdots\cap S_{k-1,+}\cap S_{k,-}|\geq M - \sum_{i=1}^{k-1} (M-|S_{i,+}|)-(M-|S_{k,-}|) \geq M- k N\left(n,p,\frac{(p-1)n}{k}\right).
\]
Thus, as long as $M > k N(n,p,\frac{(p-1)n}{k})$, this intersection is nonempty, and we have some linear combinations $\ell_1\in \Lambda_{A_1},\dots,\ell_k\in \Lambda_{A_k}$ such that $\deg(\ell_i)>\frac{(p-1)n}{k}$ for each $i$, and there is a unique index $j$ such that $[T^{x_{i,j}}]\ell_i\neq 0$ for each $i$. Let $\ell=\prod_{i=1}^k \ell_i$. We have
\[
\deg(\ell) \geq \sum_{i=1}^k \deg(\ell_i) > (p-1)n,
\]
implying again that $\ell=0$ by Proposition~\ref{prop:nddl}(i). On the other hand, upon expanding out each $\ell_i$ and multiplying through, we see that $[T^0]\ell$ can only be contributed to by tuples $(x_1,\dots,x_k)\in A_1\times\cdots\times A_k$ with $\sum_{i=1}^k x_i=0$. Since $A$ is a $k$-color sum-free set, by assumption the only such tuples are of the form $(x_{1,j},\dots,x_{k,j})$ for some $j$, but by construction there is exactly one such index $j_0$ such that for each $i$, $[T^{x_{i,j_0}}]\ell_i\neq 0$. Then $[T^0]\ell = \prod_{i=1}^k [T^{x_{i,j_0}}]\ell_i \neq 0$, a contradiction. Hence any $k$-colored sum-free set $A$ must satisfy $|A|\leq k N(n,p,\frac{(p-1)n}{k})$. Lemma~9.2 of \cite{lovaszsauermann2019multicolor} yields
\[
\left |N\left(n,p,\frac{(p-1)n}{k}\right) \right|\leq (\Gamma_{p,k})^n,
\]
so that $|A|\leq k(\Gamma_{p,k})^n$. To get rid of the factor of $k$, we employ a tensor power trick as in \cite{lovaszsauermann2019multicolor}: for any positive integer $r$, the set
\[
\{(x_{1,j_1},\dots,x_{1,j_r}),\dots,(x_{k,j_1},\dots,x_{k,j_r})\}_{(j_1,\dots,j_r)\in [1,M]^r}
\]
is a $k$-colored sum-free set of size $|A|^r$ in $\FF_p^{nr}$, so that
\[
|A|\leq \lim_{r\to\infty } \sqrt[r]{k(\Gamma_{p,k})^{nr}}=(\Gamma_{p,k})^n,
\]
as claimed.
\end{proof}

\subsection{The finite field Kakeya problem}
Let $\FF=\FF_q$ be a finite field. A set $K\subseteq \FF^n$ is called \emph{Kakeya} if it contains a line in every direction; that is, for any $v\in \FF^n\setminus \{0\}$, we have $\{u_v+a v\mid a\in \FF\}\subseteq K$ for some $u_v\in \FF^n$. In \cite{dvir2009kakeya}, Dvir uses a form of the polynomial method to give the following lower bound on the size of a Kakeya set.

\begin{theorem}[\cite{dvir2009kakeya}]
    \label{thm:kakeya1}
    Let $K\subseteq \FF_q^n$ be a Kakeya set. Then
    \[
        |K|\ge C_n\cdot q^n,
    \]
    where we can take $C_n=\frac{1}{n!}$.
\end{theorem}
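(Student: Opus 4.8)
\emph{Approach.} The plan is to dualize Dvir's original argument: where Dvir takes a nonzero low-degree polynomial vanishing on $K$ and inspects its top homogeneous component along each line, we will instead work inside the space $\Lambda_K$ of shift combinations and inspect its leading components in each direction. Fix, for every direction $v\in\FF_q^n\setminus\{0\}$, a point $u_v$ with $L_v:=\{u_v+av:a\in\FF_q\}\subseteq K$, so that $\Lambda_{L_v}\subseteq\Lambda_K$.

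\emph{Step 1: produce directional leading terms.} The key computation is that for each $d\in[0,q-1]$ there is a combination $\ell\in\Lambda_{L_v}$ whose leading component is a nonzero scalar multiple of $\sum_{|\alpha|=d}v^\alpha H^{(\alpha)}$. Indeed, writing $\ell=\sum_{a\in\FF_q}c_aT^{u_v+av}=T^{u_v}\bigl(\sum_{a}c_aT^{av}\bigr)$ and expanding via Claim~\ref{claim:exp}, one gets $\sum_{a}c_aT^{av}=\sum_{j\ge0}\bigl(\sum_{a}c_aa^{j}\bigr)\sum_{|\alpha|=j}v^\alpha H^{(\alpha)}$. By the one-dimensional degree lemma (Proposition~\ref{prop:1ddl}) applied to the set $\FF_q$, we may choose $(c_a)$ with $\sum_a c_aa^j=0$ for $j<d$ and $\sum_a c_aa^d\neq0$; since $\sum_{|\alpha|=d}v^\alpha H^{(\alpha)}\neq0$ for $v\neq0$, and composing with $T^{u_v}=\mathrm{id}+(\text{terms of positive degree})$ alters neither $\deg$ nor the leading component, this gives the claim. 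Hence $\sum_{|\alpha|=d}v^\alpha H^{(\alpha)}\in\Delta_K^d$ for every $v\neq0$ and every $d\in[0,q-1]$.

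\emph{Step 2: these leading terms span everything.} I claim that for $d\le q-1$ the operators $\sum_{|\alpha|=d}v^\alpha H^{(\alpha)}$, as $v$ ranges over $\FF_q^n\setminus\{0\}$, span all of $\Delta_{\FF_q^n}^d=\langle H^{(\alpha)}:|\alpha|=d\rangle$ (which has dimension equal to the number of degree-$d$ monomials in $n$ variables, as in the proof of Proposition~\ref{prop:nddl}(ii) via Lemma~\ref{lem:mlindep}). If a functional with coefficients $(\mu_\alpha)_{|\alpha|=d}$ annihilated every such operator, the homogeneous polynomial $\sum_{|\alpha|=d}\mu_\alpha X^\alpha$ would vanish on $\FF_q^n\setminus\{0\}$, hence on all of $\FF_q^n$ (it is homogeneous of degree $d$, and trivially zero at the origin when $d\ge1$); as $d<q$ its individual degrees are all $<q$, so it is identically zero and $\mu=0$. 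Therefore $\dim\Delta_K^d=\binom{n+d-1}{n-1}$ for every $0\le d\le q-1$.

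\emph{Step 3: count dimensions.} The filtration of $\Lambda_K$ by the subspaces $\{\ell:\deg(\ell)\ge d\}$ has successive quotients isomorphic (via the leading-component map) to $\Delta_K^d$, so $|K|=\dim\Lambda_K=\sum_{d\ge0}\dim\Delta_K^d\ge\sum_{d=0}^{q-1}\binom{n+d-1}{n-1}=\binom{n+q-1}{n}=N(n,\infty,q-1)$ by the hockey-stick identity, using $\dim\Lambda_K=|K|$ from Lemma~\ref{lem:mlindep}. Since $\binom{n+q-1}{n}=\frac{q(q+1)\cdots(q+n-1)}{n!}\ge\frac{q^n}{n!}$, this yields the theorem with $C_n=\tfrac1{n!}$. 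I expect Step~2 to be where the real content sits: assembling the one-dimensional leading terms contributed by individual lines into a basis of the full space of degree-$d$ operators is exactly where the hypothesis of a line in every direction and the inequality $q>d$ are used, mirroring the role of the top homogeneous component in Dvir's original proof.
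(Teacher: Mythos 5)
Your proposal is correct and follows the same skeleton as the paper's proof: extract the directional leading terms $H_{v^d}=\sum_{|\alpha|=d}v^\alpha H^{(\alpha)}$ from each line $L_v\subseteq K$ via Proposition~\ref{prop:1ddl}, show that these span $\langle H^{(\alpha)}\rangle_{|\alpha|=d}$ for each $d\le q-1$, and then sum dimensions over the degree filtration to get $|K|\ge\binom{n+q-1}{n}\ge q^n/n!$. The one place you genuinely diverge is the spanning claim (your Step~2, the paper's identity~\eqref{eqn:Hassespan}). The paper proves it by induction on $n$: it expands $H_{(v_1+cv_2)^d}$, runs a Vandermonde argument over $d+1$ values of $c$ to isolate the mixed terms $H_{v_2^i}H_{v_1^{d-i}}$, and then invokes the inductive hypothesis in dimension $n-1$. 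You instead dualize: a functional $(\mu_\alpha)$ annihilating every $H_{v^d}$ yields a homogeneous degree-$d$ polynomial vanishing on all of $\FF_q^n$, which, having all individual degrees at most $d<q$, must be identically zero. Your route is shorter and makes the role of the hypothesis $d\le q-1$ completely transparent, while the paper's induction stays entirely inside the operator calculus and produces the explicit generators $H^{(\alpha')}H_{e_n}^{(d-i)}$ that it then reuses in the multiplicity version (Theorem~\ref{thm:kakeya2}). The only point worth flagging is that you invoke without proof the standard fact that a polynomial over $\FF_q$ with every individual degree below $q$ which vanishes on $\FF_q^n$ is the zero polynomial; this is the $n$-variable analogue of the vanishing step in the proof of Proposition~\ref{prop:1ddl}(i) and follows by induction on the number of variables, so citing it is reasonable, but a one-line justification would make your argument fully self-contained.
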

The constant $C_n$ in this bound is improved to $(2-\frac{1}{q})^{-n}$ in \cite{dvir2009kakeyamult}, using an extension of the original argument that the authors call the ``method of multiplicities.'' 
\begin{theorem}[\cite{dvir2009kakeyamult}]
    \label{thm:kakeya2}
    If $K\subseteq \FF_q^n$ is a Kakeya set, then $|K|\ge \left(\frac{q}{2-1/q}\right)^n$.
\end{theorem}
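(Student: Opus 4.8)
The plan is to upgrade the polynomial-method proof of Theorem~\ref{thm:kakeya1} to a multiplicity version, following Dvir--Kopparty--Saraf--Sudan~\cite{dvir2009kakeyamult}, with the shift-operator formalism of this paper supplying the dual objects. Suppose, for contradiction, that $|K| < \left(\frac{q}{2-1/q}\right)^{n} = \left(\frac{q^{2}}{2q-1}\right)^{n}$. Fix a large integer $m$ and set $D = \lceil mq^{2}/(2q-1)\rceil - 1$, so that $D < mq^{2}/(2q-1)$; since $\binom{n+D}{n}/\binom{m+n-1}{n} \to (D/m)^{n} \to (q^{2}/(2q-1))^{n}$ as $m\to\infty$, for $m$ large enough we have $\binom{n+D}{n} > |K|\binom{m+n-1}{n} = \dim\Lambda_{(K,m)}$ by Lemma~\ref{lem:mlindep}. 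The pairing $(P,\ell)\mapsto(\ell P)(0)$ between $P_{n}^{D}$ and $\Lambda_{(K,m)}$ then has nontrivial left kernel, so there is a nonzero $P\in P_{n}^{D}$ with $(\ell P)(0)=0$ for every $\ell\in\Lambda_{(K,m)}$; since $((T^{a})^{(\beta)}P)(0) = H^{(\beta)}P(a)$, this says exactly that $H^{(\beta)}P(a)=0$ for all $a\in K$ and $|\beta|<m$, i.e.\ $P$ vanishes to order $m$ at each point of $K$. Write $D_{0}=\deg P\le D$; note $D_{0}\ge m$, since (by Taylor expansion at any point of the nonempty set $K$) a nonzero polynomial vanishing to order $m$ at a point has degree at least $m$. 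Let $P_{D_{0}}$ be the leading homogeneous component of $P$.

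Next I would feed in the Kakeya lines. Fix a direction $v\neq 0$ and a line $L_{v}=\{a_{v}+tv:\ t\in\FF_{q}\}\subseteq K$. For $|\gamma|<m$ the polynomial $H^{(\gamma)}P$ vanishes to order $m-|\gamma|$ at each point of $L_{v}$ (using that Hasse derivatives commute, so $H^{(\beta')}H^{(\gamma)}P$ is a scalar multiple of $H^{(\beta'+\gamma)}P$); hence, regarding $t\mapsto (H^{(\gamma)}P)(a_{v}+tv)$ as a univariate polynomial of degree at most $D_{0}-|\gamma|$, it has at least $(m-|\gamma|)q$ roots counted with multiplicity, and so it vanishes identically whenever $D_{0}-|\gamma|<(m-|\gamma|)q$, i.e.\ whenever $|\gamma|<m':=\lceil(mq-D_{0})/(q-1)\rceil$ (here $1\le m'\le m$, using $m\le D_{0}\le D<mq$). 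For such $\gamma$, the coefficient of $t^{D_{0}-|\gamma|}$ in that (vanishing) polynomial is $(H^{(\gamma)}P_{D_{0}})(v)$ — only the top component $P_{D_{0}}$ of $P$ can contribute to that degree, since $H^{(\gamma)}$ lowers total degree by exactly $|\gamma|$ on a homogeneous piece — so $(H^{(\gamma)}P_{D_{0}})(v)=0$. As every nonzero $v$ is a direction of $K$, and $v=0$ is trivial, this shows that the nonzero homogeneous polynomial $P_{D_{0}}$ of degree $D_{0}$ vanishes to order $m'$ at every point of $\FF_{q}^{n}$.

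Finally I would invoke the degree bound on $\FF_{q}^{n}$ with multiplicities: a nonzero polynomial vanishing to order $m'$ on all of $\FF_{q}^{n}$ has total degree at least $m'q$ (equivalently, the pairing $(P,\ell)\mapsto(\ell P)(0)$ is left-nondegenerate on $P_{n}^{m'q-1}\times\Lambda_{(\FF_{q}^{n},m')}$; this is the Schwartz--Zippel lemma with multiplicities, which can be obtained within the present framework by descending coordinate directions, or simply quoted from~\cite{dvir2009kakeyamult}). Therefore $D_{0}\ge m'q\ge \frac{q(mq-D_{0})}{q-1}$, which rearranges to $D_{0}(2q-1)\ge mq^{2}$, i.e.\ $D_{0}\ge mq^{2}/(2q-1)>D$, contradicting $D_{0}\le D$. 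Hence no Kakeya set with $|K|<\left(\frac{q}{2-1/q}\right)^{n}$ exists, and since the hypothesis was a strict inequality and $m$ was free, we conclude $|K|\ge\left(\frac{q}{2-1/q}\right)^{n}$.

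The step I expect to be the main obstacle is the middle one: passing from ``$P$ vanishes to order $m$ on the one-dimensional sets $L_{v}$'' to ``$P_{D_{0}}$ vanishes to order $m'$ on the whole $\FF_{q}^{n}$,'' together with the bookkeeping relating the working degree $D_{0}$, the input multiplicity $m$, and the induced multiplicity $m'$ — one must verify that $D_{0}-|\gamma|<(m-|\gamma|)q$ indeed holds throughout $|\gamma|<m'$, and that extracting the top power of $t$ genuinely isolates a Hasse derivative of $P_{D_{0}}$. This is exactly where the method of multiplicities improves on the plain polynomial method, and it is where the constant $\frac{q}{2-1/q}$ appears, as the optimum of $D_{0}\ge q(mq-D_{0})/(q-1)$ (namely $D_{0}=mq^{2}/(2q-1)$). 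Everything else reduces to dimension counting via Lemma~\ref{lem:mlindep} and the one-variable fact that a polynomial of degree below $m'q$ cannot have an $m'$-fold zero at each of $q$ distinct points.
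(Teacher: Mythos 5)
Your proof is correct, but it runs the argument in the opposite direction from the paper, so the comparison is worth recording. You follow the original primal scheme of \cite{dvir2009kakeyamult}: use Lemma~\ref{lem:mlindep} only to count $\dim\Lambda_{(K,m)}=|K|\binom{m+n-1}{n}$, produce via the pairing $(P,\ell)\mapsto(\ell P)(0)$ an auxiliary polynomial $P$ vanishing to order $m$ on $K$, propagate the high-multiplicity vanishing of the leading form $P_{D_0}$ to all of $\FF_q^n$ along the Kakeya lines, and finish with the multiplicity Schwartz--Zippel bound. The paper never constructs $P$: it stays entirely on the operator side, showing for each $d^*\le d$ that the leading-term space $\Delta^{d^*}_{(K,m)}$ contains every $H^{(\alpha)}$ with $|\alpha|=d^*$ (first along a single line via Proposition~\ref{prop:1dml} and directional Hasse derivatives, then globally via Claim~\ref{claim:multimonindep}), and then reads off $|K|\binom{m+n-1}{n}=\dim\Lambda_{(K,m)}\ge\sum_{d^*=0}^{d}\binom{d^*+n-1}{n-1}=\binom{d+n}{n}$. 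The two arguments are dual and rest on the same two ingredients --- Lemma~\ref{lem:mlindep} and the generalized Schwartz--Zippel lemma of \cite{dvir2009kakeyamult}, which the paper invokes inside Claim~\ref{claim:multimonindep} and you invoke at the end --- and they yield the identical trade-off $D_0\ge q(mq-D_0)/(q-1)$, hence the same constant. Your version is closer to the classical write-up and quietly uses the standard (but routine) fact that multiplicity of vanishing is preserved under restriction to a line; the paper's version buys a statement intrinsic to the shift-operator framework, namely an explicit lower bound on each $\Delta^{d^*}_{(K,m)}$, which is the form of conclusion the paper reuses elsewhere. Your bookkeeping ($D<mq^2/(2q-1)$, $m\le D_0<mq$, $1\le m'\le m$, and the extraction of $(H^{(\gamma)}P_{D_0})(v)$ as the top coefficient in $t$) all checks out.
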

More recently, Bukh and Chao \cite{bukhchao2021} improved the lower bound further to $(2-\frac{1}{q})^{-(n-1)}q^n$, making it asymptotically tight. 

As it turns out, the arguments used in \cite{dvir2009kakeya} and \cite{dvir2009kakeyamult} translate quite naturally into the language of shift operators. For purposes of illustration, we will present proofs of both Theorem~\ref{thm:kakeya1} and Theorem~\ref{thm:kakeya2} in the shift operator framework.

For convenience, we define a notion of \emph{directional} Hasse derivatives as follows: For $k\le n$, $I=(v_1,\dots,v_k)$ a sequence of linearly independent vectors in $\FF^n$, and $\alpha=(\alpha_1,\dots,\alpha_k)\in  \ZZ_{\ge 0}^k$, let $H_I^{(\alpha)}:=H_{v_1^{\alpha_k} \cdots v_k^{\alpha_k}}$ be the operator defined by
\[
H_I^{(\alpha)} f(X)=[Z^\alpha] f(X+Z_1 \cdot v_1 +\cdots + Z_k\cdot v_k).
\]
When $k=n$ and $(v_1,\dots,v_n)$ is the standard coordinate basis, this agrees with our usual definition of $H^{(\alpha)}$. We note down a few simple properties of these directional derivative operators, which can be checked via direct computation:
\begin{itemize}
    \item For any $v\in \FF^n$ and $d\ge 0$,
    \[
    H_{v^d}=\sum_{|\alpha|=d}v^\alpha H^{(\alpha)}.
    \]
    \item For linearly independent $v_1,\dots,v_k$,
    \[
    H_{v_1,\dots,v_k}^{(\alpha)}=H_{v_1^{\alpha_1}}\cdots H_{v_k^{\alpha_k}}.
    \]
    \item For $v_1,\dots,v_k\in \FF^n$ linearly independent, $c_1,\dots,c_k\in \FF$, and $d\ge 0$,
    \[
        \label{eqn:dirhasse}
        H_{(c_1v_1+\cdots+c_k v_k)^d}=\sum_{\alpha\in \ZZ_{\ge 0}^k:\: |\alpha|=d} c^\alpha H_{(v_1,\dots,v_k)}^{(\alpha)}. \tag{$2$}
    \]
    
    In particular, this along with the previous property implies that each $H_I^{(\alpha)}$ is a linear combination of the usual Hasse derivatives.
\end{itemize}

With these properties in mind, Theorem~\ref{thm:kakeya1} follows quite readily from considering the invariants $\Delta_K^d$ for a Kakeya set $K$.

\begin{proof}[Proof of Theorem~\ref{thm:kakeya1}]
    Let $K\subseteq \FF^n=\FF_q^n$ be a Kakeya set. For each $v\in \FF^n\setminus \{0\}$, let $L_v=\{u_v+a v\mid a\in \FF\}\subseteq K$ be the set of points in a fixed line in the direction of $v$ contained in $K$. Then Proposition~\ref{prop:1ddl} yields that $\Delta_{L_v}=\bigcup_{0\le d\le q-1}\langle H_{v^d}\rangle$. This means that for each $d\le q-1$, we have
    \[
    \Delta^d_{K}\supseteq \text{span}(\Delta^d_{L_v}: \: v\in \FF^n\setminus\{0\})=\langle H_{v^d}\rangle_{v\in \FF^n\setminus \{0\}}. 
    \]
    In other words, if we consider the span $\Lambda_K$ of the shift operators associated to elements of $K$, the set of lowest degree terms attained in their derivative expansions contains the $d$th directional derivatives in every direction, for all $d\in [0,q-1]$. The key observation now is that the span of these $d$th directional derivatives in fact contains all Hasse derivatives of degree $d$.

    \begin{claim}
        For any $d\in [0,|\FF|-1]$, we have
        \[
        \label{eqn:Hassespan}
        \langle H_{v^d}\rangle_{v\in \FF^n\setminus \{0\}} = \langle H^{(\alpha)}\rangle_{|\alpha|=d}. \tag{$3$}
        \]
    \end{claim}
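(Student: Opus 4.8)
The plan is to reduce the claim to the elementary fact that a polynomial of small degree cannot vanish identically on $\FF^n$. The inclusion $\langle H_{v^d}\rangle_{v\in\FF^n\setminus\{0\}}\subseteq\langle H^{(\alpha)}\rangle_{|\alpha|=d}$ is immediate from the identity $H_{v^d}=\sum_{|\alpha|=d}v^\alpha H^{(\alpha)}$ recorded above. Thus both sides are subspaces of $U:=\langle H^{(\alpha)}\rangle_{|\alpha|=d}$, and since the operators $H^{(\alpha)}$ with $|\alpha|=d$ are linearly independent (because $H^{(\alpha)}X^\beta=\binom{\beta}{\alpha}X^{\beta-\alpha}$ equals $1$ when $\alpha=\beta$, and for $|\alpha|=|\beta|=d$ vanishes otherwise, so $\sum_{|\alpha|=d}c_\alpha H^{(\alpha)}=0$ forces every $c_\alpha=0$ upon evaluation at monomials $X^\beta$), $\dim U$ equals the number of $\alpha\in\NN^n$ with $|\alpha|=d$. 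It therefore suffices to show that $W:=\langle H_{v^d}\rangle_{v\in\FF^n\setminus\{0\}}$ is not contained in any proper subspace of $U$.

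Suppose $W\neq U$. Then some nonzero linear functional on $U$ vanishes on $W$; writing it in the basis dual to $\{H^{(\alpha)}\}_{|\alpha|=d}$ yields coefficients $(c_\alpha)_{|\alpha|=d}$, not all zero, such that the functional evaluated on $H_{v^d}$ is $\sum_{|\alpha|=d}c_\alpha v^\alpha$. Hence $\sum_{|\alpha|=d}c_\alpha v^\alpha=0$ for every $v\in\FF^n\setminus\{0\}$, and also at $v=0$ (automatically, since this expression is homogeneous of degree $d$, or trivially when $d=0$). Equivalently, the polynomial $P(X):=\sum_{|\alpha|=d}c_\alpha X^\alpha\in\FF[X_1,\dots,X_n]$ vanishes on all of $\FF^n$.

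To finish, note that since $P$ is homogeneous of degree $d$ we have $\deg_i(P)\le d\le|\FF|-1<|\FF|$ for every $i$; picking any monomial $c_\alpha X^\alpha$ of $P$ with $c_\alpha\neq 0$ and applying the Combinatorial Nullstellensatz (Theorem~\ref{thm:null1}) with $t_i=\alpha_i$ and $S_i=\FF$ (valid as $|\FF|>\alpha_i$) produces a point of $\FF^n$ at which $P$ does not vanish, a contradiction. Hence $P\equiv 0$, so all $c_\alpha=0$, contradicting the choice of the functional. Therefore $W=U$, which is exactly \eqref{eqn:Hassespan}. The only step requiring care is this last one: the hypothesis $d\le|\FF|-1$ is precisely what supplies the per-variable degree bound needed to conclude that a polynomial vanishing on all of $\FF^n$ is zero, and for larger $d$ the claim genuinely fails.
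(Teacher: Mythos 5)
Your proof is correct, but it takes a genuinely different route from the paper's. The paper argues by induction on $n$: it expands $H_{(v_1+cv_2)^d}=\sum_{i=0}^d c^i H_{v_2^i}H_{v_1^{d-i}}$ for $d+1$ distinct values of $c\in\FF$, extracts each mixed operator $H_{v_2^i}H_{v_1^{d-i}}$ from this family via the one-dimensional Vandermonde fact from Proposition~\ref{prop:1ddl}, and then specializes $v_1=e_n$, letting $v_2$ range over the first $n-1$ coordinates and invoking the inductive hypothesis. You instead dualize: after the easy inclusion and the linear independence of $\{H^{(\alpha)}\}_{|\alpha|=d}$, a proper containment would yield a nonzero homogeneous form $P$ of degree $d\le|\FF|-1$ vanishing on all of $\FF^n$, which is impossible. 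Your appeal to Theorem~\ref{thm:null1} is legitimate here: homogeneity gives $\deg P=\sum_i\alpha_i$ for any surviving monomial and $|\FF|>\alpha_i$, so the hypotheses are met (the same conclusion also follows from the more elementary fact that a nonzero polynomial with individual degrees below $|\FF|$ cannot vanish on all of $\FF^n$). Your argument is shorter, avoids induction, and makes transparent where $d\le|\FF|-1$ enters; the paper's version stays entirely within its own Vandermonde machinery and exhibits the intermediate operators $H_{v_2^i}H_{v_1^{d-i}}$ explicitly, which is in the spirit of the rest of the section. One small caveat: your closing assertion that the claim ``genuinely fails'' for larger $d$ is not true universally (for $n=1$ it holds for every $d$, since $H_{v^d}=v^dH^{(d)}$), though nothing in your proof depends on that remark.
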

    \begin{proof}
        We use induction on $n$. The base case $n=1$ is immediate, because $H_{v^d}=v^d H^{(d)}$. Now let $n\ge 2$ and suppose the claim is known for dimension $n-1$, for all $d$. Let $v_1,v_2\in \FF^n$ be linearly independent (and thus, in particular, nonzero). Then for any $c\in \FF$, the left hand side of \eqref{eqn:Hassespan} contains
        \[
            H_{(v_1+cv_2)^d}=\sum_{i=0}^d c^i H_{v_2^i} H_{v_1^{d-i}}.
        \]
        As in the proof of Proposition~\ref{prop:1ddl}, let $v_c=(c^0,\dots,c^d)$. Since $\{H_{v_2^i} H_{v_1^{d-i}}\}_{0\le i\le d}$ is linearly independent, we have $\dim \langle H_{(v_1+cv_2)^d}\rangle_{c\in \FF} = \dim \langle v_c\rangle_{c\in \FF}$. But we showed in the proof of Proposition~\ref{prop:1ddl} that any $d+1$ distinct vectors $v_c$ are linearly independent, so indeed $H_{v_2^i} H_{v_1^{d-i}}\in \langle H_{v^d}\rangle_{v\in \FF^n\setminus \{0\}}$ for any $v_1,v_2$. Fixing $v_1=e_n$ and letting $v_2$ range over all vectors supported on the first $n-1$ coordinates, the inductive hypothesis implies that the span of such operators contains $H^{(\alpha')} H_{e_n}^{(d-i)}$ for any $i$ and any $\alpha'\in \ZZ_{\ge 0}^{n-1}$ with $|\alpha'|=i$, and thus contains $\langle H^{(\alpha)}\rangle_{|\alpha|=d}$ as desired. This completes the induction.
    \end{proof}
    By the claim, we have $\dim(\Delta_K^d)\ge \dim( \langle H^{(\alpha)}\rangle_{|\alpha|=d})=\binom{n+d-1}{n-1}$, and thus
    \[
        |K|\ge \sum_{d=0}^{q-1}\dim(\Delta_K^d) \ge \binom{n+q-1}{n}\ge \frac{1}{n!}q^n,
    \]
    as desired.
\end{proof}

The original proof of Theorem~\ref{thm:kakeya2} builds on Dvir's method in \cite{dvir2009kakeya} by considering polynomials that vanish on $K$ to high multiplicity. We likewise proceed by extending the above argument to the space of shift operators over a \emph{multiset} $(K,m)$.

\begin{proof}[Proof of Theorem~\ref{thm:kakeya2}]
    For ease of comparison with the original proof, we adopt the notation of \cite{dvir2009kakeyamult}: Let $\ell$ be a large multiple of $q$, let $m=2\ell-\ell/q$, and let $d=\ell q -1$. Let $K\subseteq \FF^n$ be a Kakeya set, and consider the multiset $(K,m)$, where the multiplicity function is a constant $m(a)=m$. 
    As before, for each $v\in \FF^n\setminus \{0\}$, let $L_v=\{u_v+a v\mid a\in \FF\}\subseteq K$ be the set of points in a fixed line in the direction of $v$ contained in $K$. Then for any $\beta\in \ZZ_{\ge 0}^n$ with $|\beta|<m$, we have
    \begin{align*}
            \Lambda_{(K,m)}&\supseteq \Lambda_{(L_v,m)}=\langle (T^{h})^{(\gamma)}\rangle_{h\in L_v,\, |\gamma|<m} \\
            &\supseteq \langle H^{(\beta)}(T^h)^{(\alpha)}\rangle_{h\in L_v,\, |\alpha|<m-|\beta|} \supseteq \langle H^{(\beta)}H_{v^{i}} T^h\rangle_{h\in L_v,\, i<m-|\beta|}.
    \end{align*}
    So, by Proposition~\ref{prop:1dml}, $\Delta_{(L_v,m)}\ni H^{(\beta)}H_{v^k}$ for all $k\in [0,q(m-|\beta|)-1]$, $|\beta|<m$. In particular, for any $d^*\le d$, by our choices of constants we have $d^*-|\beta|\le q(m-|\beta|)-1$ whenever $|\beta|\le \ell$, so that
    \[
        \Delta_{(L_v,m)}^{d^*}\ni H^{(\beta)}H_v^{(d^*-|\beta|)} \: \forall \beta\in \ZZ_{\ge 0}^n,\, |\beta|\le \ell. 
    \]
    We now make the following claim.
    \begin{claim}
        \label{claim:multimonindep}
    \[
        \langle H^{(\beta)}H_v^{(d^*-|\beta|)}\rangle_{v\in \FF^n\setminus \{0\},\, |\beta|\le \ell}=\langle H^{(\alpha)}\rangle_{|\alpha|=d^*}.
    \]
    \end{claim}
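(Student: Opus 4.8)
The inclusion $\subseteq$ is immediate: expanding $H_v^{(k)}=H_{v^k}=\sum_{|\gamma|=k}v^\gamma H^{(\gamma)}$ and using $H^{(\beta)}H^{(\gamma)}=\binom{\beta+\gamma}{\beta}H^{(\beta+\gamma)}$, each generator equals $H^{(\beta)}H_v^{(d^*-|\beta|)}=\sum_{|\gamma|=d^*-|\beta|}v^\gamma\binom{\beta+\gamma}{\beta}H^{(\beta+\gamma)}$, a linear combination of Hasse derivatives of total order exactly $d^*$. The content is the reverse inclusion, and the plan is to bootstrap it from the earlier identity \eqref{eqn:Hassespan}, which already supplies $\langle H_w^{(i)}\rangle_{w\neq 0}=\langle H^{(\gamma)}\rangle_{|\gamma|=i}$ for every $i\le|\FF|-1=q-1$.

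The basic move is this: fix a target $H^{(\alpha)}$ with $|\alpha|=d^*$; if we can choose $\beta\le\alpha$ with $d^*-|\beta|\le q-1$, $|\beta|\le\ell$, and $\binom{\alpha}{\beta}\not\equiv 0\pmod p$ (by Lucas this last condition just says $\beta$ is digit-dominated by $\alpha$ in base $p$), then \eqref{eqn:Hassespan} applied with $i=d^*-|\beta|$, left-multiplied by $H^{(\beta)}$, yields $\langle H^{(\beta)}H_w^{(i)}\rangle_{w\neq 0}=\langle\binom{\beta+\gamma}{\beta}H^{(\beta+\gamma)}\rangle_{|\gamma|=i}\ni\binom{\alpha}{\beta}H^{(\alpha)}$, as needed. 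When $d^*\le\ell$ this works immediately with $\beta=\alpha$, since $\binom{\alpha}{\alpha}=1$. The difficulty is that $d^*$ ranges up to $d=\ell q-1$, far beyond $\ell+q-1$, so for many $\alpha$ no single admissible $\beta$ exists, and the ``spurious'' cross-terms produced by small-$\beta$ generators — which over $\FF_q$ cannot distinguish Hasse derivatives having the same reduced exponent pattern — must be cancelled against other generators. To handle this I would prove a strengthened statement with the weight budget $\ell$ replaced by a variable $L\ge q$, and induct on $d^*$: given a target of degree $d^*$ with budget $L$, pick $\nu\le\alpha$ with $|\nu|$ a fixed small step, $\binom{\alpha}{\nu}\not\equiv 0\pmod p$, apply the inductive hypothesis at degree $d^*-|\nu|$ with budget $L-|\nu|$ to put $H^{(\alpha-\nu)}$ in the corresponding span, and left-multiply by $H^{(\nu)}$: each generator $H^{(\beta')}H_w^{(d^*-|\nu|-|\beta'|)}$ with $|\beta'|\le L-|\nu|$ turns into $\binom{\nu+\beta'}{\beta'}H^{(\nu+\beta')}H_w^{(d^*-|\nu+\beta'|)}$ with $|\nu+\beta'|\le L$, a legitimate degree-$d^*$ budget-$L$ generator, while the image of $H^{(\alpha-\nu)}$ is $\binom{\alpha}{\nu}H^{(\alpha)}$.

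The step I expect to be the genuine technical heart is precisely this finite-field ``wrap-around'' issue: \eqref{eqn:Hassespan} — equivalently, Vandermonde interpolation over $\FF$ — only resolves directional orders below $|\FF|$, whereas the total order $d^*$ we must reach is much larger, so one must iterate the reduction and show at every stage that a digit-dominated $\nu$ with the correct weight is available (so the binomial coefficients $\binom{\alpha}{\nu}$, $\binom{\nu+\beta'}{\beta'}$ never vanish mod $p$) and that $d^*$ is driven down into the base range $d^*\le L+q-1$ while $L$ never drops below $q$. This is exactly where the specific choices $m=2\ell-\ell/q$ and $d=\ell q-1$ — i.e.\ the generous multiplicity and degree budgets, with $\ell$ a large multiple of $q$ — get used, and it comes down to a careful Lucas-type accounting of which base-$p$ digits of the exponents remain ``shaveable'' after each reduction. (An alternative route is to mimic the induction on $n$ from the proof of \eqref{eqn:Hassespan}: write $v=v'+ce_n$, expand $H_{(v'+ce_n)^k}$ via \eqref{eqn:dirhasse}, isolate the $e_n$-directional component of each order by Vandermonde interpolation in $c$, and combine the $(n-1)$-dimensional case with the identity $H^{(\alpha')}H_{e_n}^{(j)}=H^{((\alpha',j))}$; but the Vandermonde step there again only separates directional orders $<|\FF|$, so the same degree-lowering device is required.)
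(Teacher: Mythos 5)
Your forward inclusion and the basic ``left-compose with $H^{(\nu)}$'' move are both correct, and you have correctly located where the difficulty lies — but the degree-lowering mechanism you propose to resolve it cannot be completed, because the digit-dominated step $\nu$ it requires can simply fail to exist. Concretely, take $q=p$ prime and suppose $p^{k-1}<\ell<p^k$ for some $k$ (which happens for most admissible $\ell$, since $\ell$ only needs to be a large multiple of $q$). Then $\alpha=(p^k,0,\dots,0)$ is a legitimate target: $d^*=p^k\le \ell p-1=d$. But by Lucas the only $\nu\le\alpha$ with $\binom{\alpha}{\nu}\not\equiv 0\pmod p$ are $\nu=0$ and $\nu=\alpha$; the first makes no progress and the second has weight $p^k>\ell$, violating every budget. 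So neither your base case nor your inductive step ever applies to this $\alpha$, and no Lucas-type bookkeeping can fix it: any mechanism of the form $H^{(\nu)}H^{(\alpha-\nu)}=\binom{\alpha}{\nu}H^{(\alpha)}$ is dead on arrival here. Worse, $H^{(\alpha)}$ genuinely cannot be reached ``locally'': the only generators whose expansion has a nonzero coefficient on $H^{((p^k,0,\dots,0))}$ are those with $\beta=0$, i.e.\ the $H_{v^{d^*}}$, and since the coefficient functions $v\mapsto v^\gamma$ collapse modulo $v_i^q=v_i$ once $|\gamma|\ge q$, isolating this $H^{(\alpha)}$ forces a cancellation of the unwanted $H^{(\gamma)}$, $|\gamma|=d^*$, against generators with \emph{other} values of $\beta$ — a global linear-algebra fact that a per-target induction of your form has no way to produce. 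Your alternative route (inducting on $n$ as in the proof of \eqref{eqn:Hassespan}) founders on the same rock, as you note.

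The missing tool is a duality argument. The paper normalizes $v_n=1$, $\beta_n=0$, expands each generator as $\sum_{\alpha}(H^{(\beta)}(h^\alpha))H^{(\alpha)}H_{e_n}^{(d^*-|\alpha|)}$, and reduces the claim to showing that the matrix $(H^{(\beta)}(h^\alpha))$, with rows indexed by $(h,\beta)\in(\FF^{n-1},\ell+1)$ and columns by $\alpha\in\ZZ_{\ge 0}^{n-1}$, $|\alpha|\le d^*$, has full column rank. A nontrivial column relation would produce a nonzero polynomial $P$ of degree at most $d^*\le \ell q-1<q(\ell+1)$ all of whose Hasse derivatives of order at most $\ell$ vanish at every point of $\FF_q^{n-1}$, contradicting the extended Schwartz--Zippel lemma of \cite{dvir2009kakeyamult}; a count of rows versus columns then gives the dimension bound, and equality follows since the left-hand span sits inside the right-hand space. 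This handles all the cross-term cancellations at once and never needs any binomial coefficient to be nonzero mod $p$. To repair your write-up you would need to replace the ``find $\nu$ with $\binom{\alpha}{\nu}\ne 0$'' step by an argument valid for every $\alpha$, and some form of the multiplicity Schwartz--Zippel bound (equivalently, a statement about Hermite interpolation with multiplicity $\ell+1$ over $\FF_q^{n-1}$) appears unavoidable.
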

    \begin{proof}
        It suffices to consider $v=(h_1,\dots,h_n)$ with $h_n=1$, and $\beta=(\beta_1,\dots,\beta_n)$ with $\beta_n=0$. By \eqref{eqn:dirhasse}, we have
    \begin{align*}
                H^{(\beta)}H_v^{(d^*-|\beta|)}&=H^{(\beta)}\left(\sum_{\alpha\in \ZZ_{\ge 0}^n:\: |\alpha|=d^*-|\beta|} h^\alpha H^{(\alpha)}\right) \\
                &= \sum_{\alpha\in \ZZ_{\ge 0}^n:\: |\alpha|=d^*-|\beta|} \binom{\alpha+\beta}{\beta}h^\alpha H^{(\alpha+\beta)} = \sum_{\alpha\in \ZZ_{\ge 0}^n:\: |\alpha|=d^*}(H^{(\beta)}(h^\alpha))H^{(\alpha)}\\
                &=  \sum_{\alpha\in \ZZ_{\ge 0}^{n-1}:\: |\alpha|\le d^*}(H^{(\beta)}(h^\alpha))H^{(\alpha)}H_{e_n}^{(d^*-|\alpha|)}.
    \end{align*}
    It thus suffices to show that the matrix
    \[
        (H^{(\beta)}(h^{\alpha}))_{\substack{(h,\beta)\in (\FF^{n-1},\ell+1) \\ \alpha \in \ZZ_{\ge 0}^{n-1}:\: |\alpha|\le d^*}}\, ,
    \]
    with rows indexed by $(h,\beta)$ and columns indexed by $\alpha$, has full rank. Suppose for the sake of contradiction that this is not the case. Thus, there are some constants $c_\alpha$ such that
    \[
        \sum_{|\alpha|\le d^*} c_\alpha H^{(\beta)} (h^\alpha)=0 \qquad \forall (h,\beta)\in (\FF^{n-1},\ell+1).
    \]
    Let $P(X)=\sum_{|\alpha|\le d^*} c_\alpha X^\alpha$. Then we have $(H^{(\beta)}P)(h)=0$ for all $(h,\beta)\in (\FF^{n-1},\ell+1)$. The generalized Schwartz-Zippel lemma \cite[Lemma~2.7]{dvir2009kakeyamult} then implies that $\deg(P)\ge \frac{|(\FF^{n-1},\ell+1)|}{q^{n-2}}=q(\ell+1)>d^*$, a contradiction. So, the aforementioned matrix indeed has full rank. The number of rows in the matrix is
    \[
        q^{n-1} \binom{\ell+n-1}{n-1}\ge \binom{\ell q+n-1}{n-1}\ge \binom{d^*+n-1}{n-1},
    \]
    where the right hand side is the number of columns. This means that 
    \[
    \dim(\langle H^{(\beta)}H_v^{(d^*-|\beta|)}\rangle_{v\in \FF^n\setminus \{0\}, |\beta|\le \ell})\ge \binom{d^*+n-1}{n-1} = \dim(\langle H^{(\alpha)}\rangle_{ |\alpha|=d^*}).
    \]
    Since the space on the left hand side is a subspace of the space on the right, this shows the desired equality.
    \end{proof}
    By Claim~\ref{claim:multimonindep}, we then have, for each $d^*\le d$,
    \[
        \Delta^{d^*}_{(K,m)}\supseteq \langle H^{(\alpha)}\rangle_{\alpha\in \ZZ_{\ge 0}^n:\: |\alpha|=d^*},
    \]
    so that
    \[
    |K|\binom{m+n-1}{n}=\dim(\Lambda_{(K,m)})\ge \sum_{d^*=0}^d \dim(\Delta_{(K,m)}^{d^*}) \ge \sum_{d^*=0}^d \binom{d^*+n-1}{n-1}=\binom{d+n}{n}.
    \]
    We then conclude, as in \cite{dvir2009kakeyamult}, that
    \[
        |K|\ge \lim_{\ell\to \infty}\frac{\binom{\ell q - 1 + n}{n}}{\binom{2\ell - \ell/q + n-1}{n}}=\left(\frac{q}{2-1/q}\right)^n,
    \]
    as desired.
\end{proof}

\section{Future Directions}

\label{sec:future}

The applications presented in the preceding sections provide a sample, but by no means an exhaustive list, of known results for which our method produces new proofs. In this final section, we shift our attention to discuss a few promising directions in which the shift operator method could be applied to produce new results.

\subsection{The Erd\H{o}s-Ginzburg-Ziv problem}

One direction that holds promise is further study of the Erd\H{o}s-Ginzburg-Ziv problem using this method. As previously mentioned, Zakharov showed in \cite{zakharov2020convexEGZ} that $s(\FF_p^n)\le 4^n p$ for fixed $n$ and sufficiently large $p$, while a recent breakthrough by Sauermann and Zakharov \cite{sauermann2023erd} shows, for each $\varepsilon>0$, an upper bound of the form $D_{\varepsilon, p} \cdot (C_\varepsilon p^{\varepsilon})^n$ in the regime where $p$ is fixed and $n$ grows. It would be very interesting to further improve these upper bounds, whether it is by removing the dependence on $p$ in the base of the exponent for large $n$, obtaining even sharper bounds for fixed $n$, or saying more about bounds in the intermediate regime.

The proofs in Section~\ref{sec:applymd} offer a first piece of evidence that the shift operator method may be relevant for further progress on this problem. However, since lower bound constructions on the order of $\sqrt{p}^n$ exist for the $p$-color sum-free problem over $\FF_p^n$ as discussed in \cite{Sau2023distinct,sauermann2023erd}, any efforts to make progress past this ``multi-colored barrier'' in the Erd\H{o}s-Ginzburg-Ziv problem will need to somehow make use of the fact that multiple copies of the same set are being considered in the sums. The shift operator method suggests a potential avenue for making use of this fact: When sumsets are taken, Lemma~\ref{lem:adddeg} suggests multiplicative behavior in the sets $\Delta_A$ of lowest degree terms in linear combinations of shift operators. When the sumsets consist of multiple copies of $A$ added together, \emph{powers} of these linear combinations become relevant, and it is plausible that these are more fruitful to analyze than general products of such linear combinations.

A more specific line of approach derives from the idea that the invariants $\Delta_A^d$ should capture a great deal of information about the structure of the set $A$ -- as we have seen in the applications to the Nullstellensatz (where they reflect the fact that $A$ contains a grid) and the Kakeya problem (where they directly capture the condition about containing a line in each direction). Concretely, given a ``suitably generic'' set $A$ that is large enough to guarantee high $\deg(A)$ via Proposition~\ref{prop:nddl}(ii), can we in turn guarantee that $\Delta_A^d$ contains every possible lowest degree term of degree $d$, for all small $d$? If so, then as long as we can split a sufficiently large set into, say, $p-1$ suitably generic sets $A_1,\dots,A_{p-1}$, we can guarantee that $H^{(1,\dots,1)}\in \Delta_{A_i}^n$ for each $i$, and thus $H^{(p-1,\dots,p-1)}\in \Delta_{A_1+\cdots +A_{p-1}}^{n(p-1)}$, which would imply $A_1+\cdots +A_{p-1}=\FF_p^n$ by Proposition~\ref{prop:nddl}(i). Perhaps an approach of this type could yield some additional structural constraints on a large enough sequence in $\FF_p^n$ with no $p$-term zero-sum subsequences, if not a general improvement in the known bounds.

\subsection{Sums of Dilates}
Another compelling direction to explore is the application of these methods to the problem of lower bounding the size of a sum of dilates $|\lambda_1 A + \lambda_2 A|$ in $\FF_p$. The corresponding problem over $\ZZ$ is well-studied, culminating in the result of Balog and Shakan in \cite{balogshakandilates} that
\begin{equation}
    \label{eqn:dilatesZ}
    \tag{4}
    |\lambda_1 A + \lambda_2 A|\geq (\lambda_1+\lambda_2)|A|-O_{\lambda_1,\lambda_2}(1),
\end{equation}
for $\lambda_1>\lambda_2\geq 1$, which is sharp up to the additive constant.

The situation in $\FF_p$ is less well understood. Previous work on this problem (see \cite{plagnedilates,pontiverosdilates}) has shown that lower bounds like \eqref{eqn:dilatesZ} can be recovered when $|A|/p$ is sufficiently small that we can transport the problem back into $\ZZ$, a technique called \emph{rectification}. On the other hand, when $|A|/p$ is large, these lower bounds fail. For example, Pontiveros \cite{pontiverosdilates} shows that for any $\lambda$, sets $A$ with density arbitrarily close to $\frac{1}{2}$ can be constructed for sufficiently large $p$ such that $\frac{|A+\lambda A|}{p}$ is bounded away from $1$. It is natural to ask what the transition between these regimes looks like.

For the sake of simplicity, let us focus on the case $(\lambda_1,\lambda_2)=(1,2)$; the observations below hold in general, although the bounds they tease at are less enticing in the general case. Starting as in our proof in Section~\ref{sec:cd}, we can define 
\[
f(z)=\prod_{c\in A+2A} (z-c), \qquad g(x)=\prod_{a\in A}(x-a).
\]
Unlike before, we make the following striking observation: In addition to having $g(z)|T^{2a}f(z)$ for all $a\in A$, we also have $g(z)|T^{a/2}f(2z)$ for all $a\in A$. Thus, if we can find conditions under which most or all of $\{T^{2a}f(z)\}_{a\in A}\cup \{T^{a/2}f(2z)\}_{a\in A}$ is linearly independent, we will have criteria for $\frac{|A+2A|}{|A|}$ to be bounded away from $2$. The task of finding such conditions seems to invite the exploration of scalar multiplication operators like the doubling operator $Z(F)(x):=F(2x)$, and how they interact with the differential-based operators $\partial$ and $T$. In particular, preliminary observations and known constructions seem to hint that linear independence strongly fails here only when $A$ or $A+2A$ has strong multiplicative structure, which hints at potential connections to questions like the one explored in Section~\ref{sec:lacu}.

\section*{Acknowledgements}
The author would like to thank Jacob Fox for inspiring the initial questions that led to this line of research and for many helpful suggestions along the way, and Manik Dhar for inspiring the proof of Theorem~\ref{thm:kakeya1} presented here. The author would also like to thank Ryan Alweiss, Zeev Dvir, Xiaoyu He, Felipe Hernandez, Ray Li, Lisa Sauermann, Yuval Wigderson, Alex Wilson, and potentially others for their helpful input and feedback on various parts of this work.

\bibliographystyle{acm}
\bibliography{main}

\end{document}